\let\origsection\section
\renewcommand\section{\@ifstar{\starsection}{\nostarsection}}
\newcommand\nostarsection[1]
\sectionprelude\origsection{#1}\sectionpostlude}
\newcommand\starsection[1]
\newcommand\sectionprelude{%
  \vspace{1em}
}
\newcommand\sectionpostlude{%
  \vspace{1em}
}
\newtheorem{thm}{Theorem}[section]
\newtheorem{theorem}[thm]{Theorem}
\newtheorem{thma}{Theorem}
\newcommand{\repthm}[2]{
  \newtheorem*{irepthm}{Theorem #1}
  \begin{irepthm}
  #2
  \end{irepthm}
}
\newtheorem{lemma}[thm]{Lemma}
\newtheorem{coro}[thm]{Corollary}
\newtheorem{prop}[thm]{Proposition}
\newtheorem{proposition}[thm]{Proposition}
\theoremstyle{definition}
\newtheorem{defi}[thm]{Definition}
\newtheorem{definition}[thm]{Definition}
\newtheorem{remark}[thm]{Remark}
\newtheorem{example}[thm]{Example}
\newtheorem{notation}[thm]{Notation}
\theoremstyle{plain}
\newcommand\xqed[1]{%
  \leavevmode\unskip\penalty9999 \hbox{}\nobreak\hfill
  \quad\hbox{#1}}
\newcommand\exEnd{\xqed{$\diamondsuit$}}
\newcommand{\B}{\mathbb{B}}
\newcommand{\R}{\mathbb{R}}
\newcommand{\T}{\mathbb{T}}
\newcommand{\TT}{\T}
\newcommand{\Z}{\mathbb{Z}}
\newcommand{\ZZ}{\Z}
\newcommand{\Q}{\mathbb{Q}}
\newcommand{\Qmax}{\mathbb{Q}_{\mathrm{max}}}
\newcommand{\calG}{\mathcal{G}}
\newcommand{\calg}{\calG}
\newcommand{\calC}{\mathcal{C}}
\newcommand{\calc}{\calC}
\newcommand{\calP}{\mathcal{P}}
\newcommand{\calp}{\calP}
\newcommand{\calF}{\mathcal{F}}
\newcommand{\calf}{\calF}
\newcommand{\calD}{\mathcal{D}}
\newcommand{\cald}{\calD}
\newcommand{\calQ}{\mathcal{Q}}
\newcommand{\calq}{\calQ}
\newcommand{\calH}{\mathcal{H}}
\newcommand{\calh}{\calH}
\newcommand{\calB}{\mathcal{B}}
\newcommand{\calb}{\calB}
\newcommand{\fraku}{\mathfrak{u}}
\newcommand{\frakS}{\mathfrak{S}}
\newcommand{\inj}[0]{\ar@{^{(}->}}
\newcommand{\surj}[0]{\ar@{->>}}
\newcommand{\bij}[0]{\ar@{^{(}->>}}
\newcommand{\lbij}[0]{\ar@{_{(}->>}}
\newcommand{\linj}[0]{\ar@{_{(}->}}
\newcommand{\parr}[0]{\ar@{.>}}
\newcommand{\pinj}[0]{\ar@{^{(}.>}}
\newcommand{\psurj}[0]{\ar@{.>>}}
\newcommand{\pbij}[0]{\ar@{^{(}.>>}}
\newcommand{\lin}[0]{\ar@{-}}
\newcommand{\llin}[0]{\ar@{=}}
\newcommand{\usu}[0]{\ar@{->}}
\newcommand{\cmapsto}{\ar@{|->}}
\newcommand{\dsum}{\displaystyle\sum}
\newcommand{\dprod}{\displaystyle\prod}
\newcommand{\dcup}{\displaystyle\bigcup}
\newcommand{\sdrop}{\backslash}
\newcommand{\into}{\hookrightarrow}
\newcommand{\eps}{\varepsilon}
\newcommand{\ph}{\varphi}
\newcommand{\angbra}[1]{\left\langle #1\right\rangle}
\newcommand{\Terms}[1]{\operatorname{Terms}(#1)}
\newcommand{\nocontentsline}[3]{}
\newcommand{\tocless}[2]{\bgroup\let\addcontentsline=\nocontentsline#1{#2}\egroup}
\newcommand{\Cont}{\operatorname{Cont}}
\newcommand{\ContInt}{\Cont^{\circ}}
\newcommand{\ContBase}[1]{\Cont_{#1}}
\newcommand{\ContBaseInt}[1]{\ContInt_{#1}}
\newcommand{\ContIntBase}[1]{\ContBaseInt{#1}}
\def\Mon{M}
\def\base{S}
\newcommand{\GeorgeStory}[1]{}
\newcommand{\Hom}{\operatorname{Hom}}
\newcommand{\RnLexSemifield}[1]{\T^{(#1)}}
\newcommand{\rk}{\operatorname{rk}}
\newcommand{\Paul}{\mathfrak{P}}
\newcommand{\BigLarry}{lattice of $\Gamma$-rational polyhedral sets}
\newcommand{\LittleLarry}{lattice of $\Gamma$-rational polytopal sets}
\newcommand{\ConoidLarry}{lattice of $\Gamma$-admissible \ConoidSets}
\newcommand{\F}{\mathcal{F}}
\newcommand{\trdeg}{\operatorname{tr.deg}}
\newcommand{\HT}{\operatorname{ht}}
\newcommand{\mbo}[1]{\pmb{#1}}
\newcommand{\relint}{\operatorname{rel.int.}}
\newcommand{\dcap}{\displaystyle\bigcap}
\newcommand{\conoidSet}{fan support set}
\newcommand{\conoidSets}{fan support sets}
\newcommand{\ConoidSets}{\conoidSets}
\newcommand{\wt}[1]{\widetilde{#1}}
\newcommand{\wtcalf}{\wt{\calf}}
\newcommand{\lex}{\mathrm{lex}}
\newcommand{\Grobner}{Gr\"{o}bner }
\newcommand{\grobner}{\Grobner}
\newcommand{\ubar}[1]{\underline{#1}}
\renewcommand\subsection{\@startsection{subsection}{2}%
  \z@{-.5\linespacing\@plus-.7\linespacing}{.5\linespacing}%
  {\centering\normalfont\scshape}}
\renewcommand\subsubsection{\@startsection{subsubsection}{3}%
  \z@{.5\linespacing\@plus.7\linespacing}{-.5em}%
  {\normalfont\scshape}}
\let\origsubsection\subsection
\renewcommand\subsection{\@ifstar{\starsubsection}{\nostarsubsection}}
\newcommand\nostarsubsection[1]
\subsectionprelude\origsubsection{#1}\subsectionpostlude}
\newcommand\starsubsection[1]
\newcommand\subsectionprelude{%
  \vspace{0.5em}
}
\newcommand\subsectionpostlude{%
  \vspace{0.5em}
}
\title{Geometric interpretation of valuated term (pre)orders}
\author{Netanel Friedenberg }
\address{Department of Mathematics, Tulane University, New Orleans, LA 70118, USA}
\email{nfriedenberg@tulane.edu}
\author[K.~Mincheva]{Kalina~Mincheva}
\address{Department of Mathematics, Tulane University, New Orleans, LA 70118, USA}
\email{kmincheva@tulane.edu}
\begin{document}

\begin{abstract}
    Valuated term orders are studied for the purposes of Gr\"{o}bner theory over fields with valuation.
    The points of a usual tropical variety correspond to certain valuated terms preorders. Generalizing both of these, the set of all ``well-behaved'' valuated term preorders is canonically in bijection with the points of a space introduced in our previous work on tropical adic geometry. In this paper we interpret these points geometrically by explicitly characterizing them in terms of classical polyhedral geometry. This characterization gives a bijection with equivalence classes of flags of polyhedra as well as a bijection with a class of prime filters on a lattice of polyhedral sets. The first of these also classifies valuated term orders. The second bijection is of the same flavor as the bijections from \cite{PS95}
    in non-archimedean analytic geometry and indicates that the results of that paper may have analogues in tropical adic geometry. 
\end{abstract}
\maketitle

\section{Introduction}


Classical \grobner theory uses a term order which compares two monomials $m = a\ubar{x}^{\ubar{u}}$ and $m' = a'\ubar{x}^{\ubar{u'}}$ by only considering $\ubar{u}$ and $\ubar{u'}$. In \cite{CM19} Chan and Maclagan build Gr\"{o}bner theory in $k[x_1,\ldots,x_n]$, when $k$ is a field with nontrivial valuation $v:k\to\R\cup\{-\infty\}=:\T$. Their valuated \Grobner theory takes into account the coefficients by picking a weight vector $\ubar{w}\in\R^n$ and comparing $v(a)+\ubar{w}\cdot \ubar{u}$ with $v(a')+\ubar{w}\cdot\ubar{u}'$, and then using a term order to break any ties.

Valuated \grobner theory has a natural connection to tropical geometry. More specifically, the tropicalization of a subvariety of the torus is the set of $\ubar{w} \in \R^n$ for which each polynomial in its defining ideal has at least two monomials of maximal weight. In particular, tropical geometry studies valuated monomial preorders, rather than orders. A \emph{valuated monomial preorder} is a multiplicative total preorder $\leq$ on the set of monomials $a\ubar{x}^{\ubar{u}}$ such that if $v(a) \leq v(b)$ then $a \leq b$. Valuated monomial \emph{orders} are those for which $a\ubar{x}^{\ubar{u}}\equiv b\ubar{x}^{\ubar{\mu}}$ if and only if $v(a)=v(b)$ and $u=\mu$. In light of this, we treat $a\ubar{x}^{\ubar{u}}$ and $b\ubar{x}^{\ubar{u}}$ as being the same if $v(a)=v(b)$.

When working with valuated monomial orders, one is usually only interested in those in which the valuation of the coefficients play a primary role. These are exactly the orders that arise by using a weight vector and term order as above. We call such orders Chan-Maclagan orders; they also occur in ongoing work of Vaccon and Verron \cite{VV} on universal \grobner bases in Tate algebras.

Recent work of Amini and Iriarte \cite{AI22} aimed at the study of Newton-Okounkov bodies investigates spaces of (quasi)monomial preorders. An understanding of valuated monomial preorders is needed to extend their theory from varieties to semistable models over a valuation ring.

We see from the above that a weight vector $\ubar{w}\in\R^n$ provides a geometric interpretation for certain valuated monomial preorders.
In fact, if $v(k^\times)\subseteq\Q$, then most points in $\R^n$ define a valuated monomial \emph{order}. For if $\ubar{w}\in\R^n$ has irrational entries which are linearly independent over $\Q$, then distinct monomials will always have different weights.
This shows that certain valuated monomial orders have a geometric interpretation; however, not all of them arise this way.

The goal of the paper is to derive a geometric meaning for each of the following: (1) all valuated monomial preorders, (2) Chan-Maclagan orders, and (3) those valuated monomial preorders determined by a weight vector and a term \emph{preorder} as in the construction of the Chan-Maclagan orders. We do this as an application of the theory proposed in \cite{FM22}, thus verifying that 
it
carries geometric information.

The theory from \cite{FM22} applies more generally, and so we work with $k[\Mon]$ where $\Mon$ is a toric monoid. We are then able to quickly reduce to the torus case, where $k[\Mon]=k[x_1^{\pm1},\ldots,x_n^{\pm1}]$.

Let $\Gamma=v(k^\times)$ be the value group of $v$. We can now state 
our main results.

\begin{thma}
[Corollary~\ref{coro:explicit BijectionsAllPrimeCongsAndThreeOthers}]\label{thm:ClassifyAllPreorders}

There is an explicit bijection from the set of valuated monomial preorders on $k[x_1^{\pm1},\ldots,x_n^{\pm1}]$ to the set of $\Gamma$-local equivalence classes of flags of polyhedral cones in $\R_{\geq0}\times\R^n$.
\end{thma}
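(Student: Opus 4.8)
The plan is to build the bijection by establishing a chain of equivalences, factoring through the data that a valuated monomial preorder actually records. First I would unwind the definitions: a valuated monomial preorder $\leq$ on $k[x_1^{\pm 1},\ldots,x_n^{\pm 1}]$ is a multiplicative total preorder on monomials $a\ubar{x}^{\ubar{u}}$ refining the comparison of coefficient valuations. The key observation is that, because the preorder is multiplicative and compatible with $v$, it is determined by how it compares monomials $t^{\gamma}\ubar{x}^{\ubar u}$ (thinking of the ``valuation direction'' as an extra coordinate), so the natural home for the data is $\R_{\geq 0}\times\R^n$ — the first factor tracking the coefficient valuation, the remaining $n$ tracking the exponent vector. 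I would make precise that comparing two such monomials amounts to evaluating a linear functional on the difference of their exponent-valuation vectors, and that a total multiplicative preorder is the same as a total preorder on this lattice that is compatible with addition and with the ``positivity'' coming from $v$ being a valuation. This is the step that recasts the algebraic object as a purely order-theoretic/linear one on $\R_{\geq0}\times\R^n$.

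Next, I would invoke the standard correspondence between additive total preorders on a finite-dimensional real vector space (or cone) and flags of cones: any such preorder is obtained by fixing a nested sequence of linear subspaces / half-spaces and comparing lexicographically. Concretely, a total multiplicative preorder corresponds to a flag $C_0 \supsetneq C_1 \supsetneq \cdots \supsetneq C_m$ of polyhedral cones in $\R_{\geq 0}\times\R^n$ (each obtained from the previous by intersecting with a rational or irrational half-space), where one compares two exponent vectors by reading off which step of the flag first distinguishes them. The ``rational/irrational'' subtlety is precisely where $\Gamma = v(k^\times)$ enters: directions that are $\Gamma$-rational versus $\Gamma$-irrational produce genuinely different preorders, and two flags yield the same preorder exactly when they are related by the equivalence that ignores distinctions invisible over $\Gamma$. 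I would therefore define $\Gamma$-local equivalence of flags to be exactly this, and check — this is mostly bookkeeping — that the assignment preorder $\mapsto$ flag is well-defined up to $\Gamma$-local equivalence and has an inverse.

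The cleanest route, and the one the parenthetical citation to Corollary~\ref{coro:explicit BijectionsAllPrimeCongsAndThreeOthers} suggests, is not to do this by hand but to route through the machinery of \cite{FM22}: valuated monomial preorders are in canonical bijection with the points of the tropical adic space from our previous work (equivalently, with prime congruences, or with a suitable class of prime filters on the {\BigLarry}), and a separate structural result — the geometric characterization of those points in terms of classical polyhedral geometry — identifies that point set with $\Gamma$-local equivalence classes of flags of polyhedral cones. So the proof would be: (i) identify valuated monomial preorders on $k[\Mon]$ with the relevant points/filters via the dictionary of \cite{FM22}; (ii) reduce from a general toric monoid $\Mon$ to the torus case $k[x_1^{\pm1},\ldots,x_n^{\pm1}]$, using that localization corresponds to passing to an open subset; (iii) apply the polyhedral characterization to read those points off as flags of cones in $\R_{\geq0}\times\R^n$; and (iv) track through all the identifications that the composite bijection is explicit.

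The main obstacle I anticipate is step (ii)–(iii): pinning down the equivalence relation on flags so that it exactly matches the fibers of the map from flags to preorders. One must show that two flags of polyhedral cones induce the same total multiplicative preorder if and only if they are $\Gamma$-locally equivalent — the ``only if'' direction requires showing that any ambiguity in the flag (e.g., the precise slope of an irrational cutting hyperplane, or redundant steps) is genuinely undetectable by monomial comparisons with coefficients valued in $\Gamma$, while the ``if'' direction requires that $\Gamma$-inequivalent flags can always be separated by some pair of monomials. Handling the arithmetic of $\Gamma$ (which need not be all of $\R$, and whose divisibility affects which cones are ``$\Gamma$-rational'') carefully is the crux; everything else is either the definitional translation in the first paragraph or an appeal to the already-established results of \cite{FM22}.
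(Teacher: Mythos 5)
Your high-level strategy — pass from valuated monomial preorders to prime congruences on $\base[\Mon]$ via Proposition~\ref{prop: side_claim}, then read those off as flags via defining matrices and some equivalence relation — is indeed how the paper organizes things. But there are two substantive gaps.

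First, and most seriously, your first plan proposes to \emph{define} $\Gamma$-local equivalence as ``two flags are equivalent iff they induce the same preorder.'' That would make the theorem tautological and empty it of geometric content. The statement cites Definition~\ref{def: local-equiv}, which defines $\Gamma$-local equivalence independently: two flags of cones are equivalent iff they have the same $\Gamma$-admissible neighborhoods (a $\Gamma$-admissible cone meeting the relative interior of every cone in the flag). The actual content of the theorem is the non-trivial fact that this geometrically defined equivalence coincides with the fibers of the map to prime congruences; this is Corollary~\ref{coro:FlagsGiveSamePrimeIffLocallyEquiv}, and it is proved by introducing a mediating object — the prime filter $\wtcalf_P$ on the lattice of $\Gamma$-admissible fan support sets — and proving, on one side, that $\wtcalf_P$ determines $P$ (Theorem~\ref{thm:BijectionFiltersAndCont}, which requires a Farkas-type Lemma~\ref{lem: G-rational-Farkas} and the case analysis in Proposition~\ref{prop:IneqInPrimeIffHalfspaceInItsFilter}), and on the other, that $\wtcalf_{P_{\calc_\bullet}}$ consists exactly of the neighborhoods of $\calc_\bullet$ (Theorem~\ref{thm:TheFilterHasFlagMeaning}, itself requiring Proposition~\ref{prop:GeomEvalOfPNorm} and a delicate inductive argument through truncations of the flag). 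Your later paragraphs correctly identify this as ``the crux,'' but you leave it entirely as a black box. One also needs Proposition~\ref{prop:EveryFlagLocEquivToSimplicial} to see that every flag is locally equivalent to a simplicial one, without which $P_{\calc_\bullet}$ is not even defined for a general flag.

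Second, you have the wrong picture of which cones appear in the flags and where $\Gamma$ intervenes. You describe the flag as built up by ``intersecting with a rational or irrational half-space,'' suggesting the cones in the flag carry $\Gamma$-rationality data. In the paper they do not: the cones $\calc_i$ are arbitrary strongly convex polyhedral cones (with $\calc_{i-1}$ a facet of $\calc_i$), typically with irrational walls. $\Gamma$-rationality enters only through the \emph{neighborhoods}, i.e., the lattice of $\Gamma$-admissible cones used to test equivalence. Relatedly, for \emph{all} valuated monomial preorders (Theorem A rather than Theorem B) the relevant lattice is that of $\Gamma$-admissible fan support sets in $\R_{\geq0}\times N_\R$, not the lattice of $\Gamma$-rational polyhedral sets in $N_\R$ that you cite; the latter only enters for the $\ContBase{\base}$ case. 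These are not cosmetic distinctions — they are exactly what makes Lemma~\ref{lem: G-rational-Farkas} fail for cones (Remark~\ref{remark:ThisFailsForCones}) and force the case split in Proposition~\ref{prop:IneqInPrimeIffHalfspaceInItsFilter}.
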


See Definition~\ref{def: local-equiv} for the definition of $\Gamma$-local equivalence.

\begin{thma}
[Corollary~\ref{coro:explicitBijectionsContAndThreeOthers}]\label{thm:ClassifyCMPreorders}

There is an explicit bijection from the set of Chan-Maclagan preorders on $k[x_1^{\pm1},\ldots,x_n^{\pm1}]$ to the set of $\Gamma$-local equivalence classes of flags of polyhedra in $\R^n$.
\end{thma}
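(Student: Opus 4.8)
The plan is to derive Theorem~B from Theorem~A by a ``dehomogenization'' argument. Chan-Maclagan preorders are exactly the valuated monomial preorders in which the valuation of coefficients plays a primary role, i.e.\ in which any monomial $\ubar{x}^{\ubar u}$ with $v$-value $0$ contribution is compared to $\ubar{x}^{\ubar 0}=1$ only after the weight-vector comparison has been used; concretely, these are the preorders that refine the preorder ``compare $v(a)$ first.'' First I would set up the precise correspondence: a valuated monomial preorder on $k[x_1^{\pm1},\dots,x_n^{\pm1}]$ assigns, via its defining data, a flag of polyhedral cones in $\R_{\geq 0}\times\R^n$ by Theorem~A, where the first coordinate $t\in\R_{\geq 0}$ records the ``valuation direction'' (the cone lives in $\R_{\geq0}\times\R^n$ precisely because valuations are bounded below / one works with $\T=\R\cup\{-\infty\}$). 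I would then show that the Chan-Maclagan condition corresponds exactly to the condition that the flag of cones is ``transverse to the hyperplane $t=0$'' in the appropriate sense — more precisely, that the smallest cone in the flag is not contained in $\{0\}\times\R^n$, equivalently that intersecting the flag with the affine slice $\{t=1\}$ loses no information and yields a flag of polyhedra in $\R^n\cong\{1\}\times\R^n$.

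The key steps, in order, are: (1) Recall from Theorem~A the explicit bijection between valuated monomial preorders and $\Gamma$-local equivalence classes of flags of polyhedral cones in $\R_{\geq0}\times\R^n$, and unwind how a preorder is recovered from such a flag (which cone in the flag a given pair of monomials ``sees''). (2) Characterize intrinsically, on the preorder side, what it means to be Chan-Maclagan: namely that the quotient total order on value-classes is exactly the one induced by $v$ followed by a weight vector and term preorder, which forces the coefficient valuation to dominate. (3) Translate this condition through the bijection of Theorem~A into a geometric condition on the flag of cones, and verify it is exactly the condition that guarantees the cone flag is the ``cone over'' a flag of polyhedra in $\R^n$ — i.e.\ $C_i = \R_{\geq 0}\cdot(\{1\}\times P_i)$ for a flag $P_0\subseteq P_1\subseteq\cdots$ of polyhedra, up to the recession-cone part. (4) Check that $\Gamma$-local equivalence of cone flags restricts, under this correspondence, to $\Gamma$-local equivalence of the polyhedral flags (Definition~\ref{def: local-equiv}), so the bijection descends cleanly to equivalence classes. (5) Conclude by composing: Chan-Maclagan preorders $\leftrightarrow$ the distinguished subclass of cone flags $\leftrightarrow$ $\Gamma$-local equivalence classes of flags of polyhedra in $\R^n$.

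The main obstacle I expect is step~(3): making precise and correct the dictionary between ``the coefficient valuation plays a primary role'' and the geometric transversality/conicity condition on the flag, including carefully handling the boundary behavior at $t=0$ (where $-\infty$ enters) and the role of recession cones of the polyhedra $P_i$, which must match the tail of the cone flag lying in $\{0\}\times\R^n$. There is also a bookkeeping subtlety in ensuring that passing to the affine slice $\{t=1\}$ interacts correctly with $\Gamma$-rationality and with $\Gamma$-local equivalence — one must confirm that no two inequivalent polyhedral flags give $\Gamma$-locally equivalent cone flags and vice versa. Once the slice-versus-cone correspondence is nailed down, the rest is a formal transport of structure along the bijection already established in Theorem~A (Corollary~\ref{coro:explicit BijectionsAllPrimeCongsAndThreeOthers}).
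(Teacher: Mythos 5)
Your plan is essentially the approach the paper takes: it identifies Chan-Maclagan preorders with $\ContBase{\base}\base[\Mon]$, shows that a prime $P$ lies in $\ContBase{\base}\base[\Mon]$ exactly when the cone flag defining it has $\calc_0\not\subseteq\{0\}\times N_{\R}$ (Proposition~\ref{coro: matrix_form}), identifies such cone flags with flags of polyhedra via the cone/slice correspondence $\calp_\bullet\leftrightarrow c(\calp_\bullet)$, and observes that $\calp_\bullet,\calp'_\bullet$ are locally equivalent iff $c(\calp_\bullet),c(\calp'_\bullet)$ are. The paper proves Corollaries~\ref{coro:explicit BijectionsAllPrimeCongsAndThreeOthers} and \ref{coro:explicitBijectionsContAndThreeOthers} in parallel from the same lemmas rather than literally deriving the second from the first, but that is a cosmetic difference.

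One caution about your step~(2): your informal characterization of Chan-Maclagan preorders as those that ``refine the preorder `compare $v(a)$ first' '' is not correct. If $\ubar w\neq 0$, the weight comparison $v(a)+\ubar w\cdot\ubar u$ can overturn a strict inequality between $v(a)$ and $v(a')$, so a Chan-Maclagan preorder generally does not refine the $v$-first preorder. The intrinsic characterization you actually need is condition (4) of Proposition~\ref{prop: side_claim}: for every term $a\chi^u$ there exists $b\in\base^\times$ with $b\prec a\chi^u$. This is what translates, via Proposition~\ref{coro: matrix_form}, into positivity of the $(1,1)$ entry of a defining matrix, i.e.\ into the geometric condition $\calc_0\not\subseteq\{0\}\times N_{\R}$ that you correctly identify in step~(3). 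Your subsequent concern about recession cones is a non-issue: the cone-over-polyhedron map $\calp\mapsto c(\calp)$ and its inverse (restriction to $\{1\}\times N_{\R}$) are mutually inverse bijections on the nose, with recession cones automatically landing in $\{0\}\times N_{\R}$; nothing is lost. And your step~(4) — that the notion of local equivalence restricts correctly — is exactly the remark after Definition~\ref{def: local-equiv}, plus the observation that membership in $\ContBase{\base}\base[\Mon]$ depends only on the equivalence class (Corollary~\ref{coro:FlagsGiveSamePrimeIffLocallyEquiv}). Once step~(2) is corrected, the plan is sound.
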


\begin{thma}[Proposition~\ref{prop:whenDoesAPrimeTotallyOrder}]\label{thm:IntroClassifyCMOrders}
The bijection of Theorem \ref{thm:ClassifyCMPreorders} gives a bijection from the set of Chan-Maclagan orders on $k[x_1^{\pm1},\ldots,x_n^{\pm1}]$ to the set of $\Gamma$-local equivalence classes of complete flags of polyhedra in $\R^n$.
\end{thma}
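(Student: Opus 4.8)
The plan is to leverage the bijection $\Phi$ of Theorem~\ref{thm:ClassifyCMPreorders} directly. Since $\Phi$ is already known to be a bijection between Chan--Maclagan preorders and $\Gamma$-local equivalence classes of flags of polyhedra in $\R^n$, it suffices to prove that a class $\mathfrak{c}$ of flags contains a complete flag if and only if the preorder $\Phi^{-1}(\mathfrak{c})$ is a valuated monomial \emph{order}. Granting this, $\Phi$ restricts to the asserted bijection, and as a byproduct completeness is seen to be a $\Gamma$-local equivalence invariant (being equivalent to a property of the associated preorder). So the whole task reduces to the equivalence ``$\leq_{\mathcal{F}}$ is an order $\iff$ $\mathcal{F}$ is complete,'' which I would prove by establishing the two implications separately, using the explicit form of the correspondence recorded in Corollary~\ref{coro:explicitBijectionsContAndThreeOthers}.

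For ``complete $\Rightarrow$ order'': a complete flag $\mathcal{F}=(P_0\subsetneq P_1\subsetneq\cdots\subsetneq P_n)$ has $\dim P_i=i$, so $P_0=\{w_0\}$ is a single point, and choosing $w_i$ spanning the new direction of $\operatorname{aff}P_i$ over $\operatorname{aff}P_{i-1}$, the vectors $w_1,\ldots,w_n$ form a basis of $\R^n$. Under the explicit correspondence, $\leq_{\mathcal{F}}$ compares $a\ubar x^{\ubar u}$ and $b\ubar x^{\ubar\mu}$ lexicographically via the tuple $\bigl(v(a)+\langle w_0,\ubar u\rangle,\ \langle w_1,\ubar u\rangle,\ \ldots,\ \langle w_n,\ubar u\rangle\bigr)$. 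If $a\ubar x^{\ubar u}\equiv_{\mathcal{F}}b\ubar x^{\ubar\mu}$ then $\langle w_i,\ubar u-\ubar\mu\rangle=0$ for all $i\ge 1$, whence $\ubar u=\ubar\mu$ since the $w_i$ span, and then equality of the leading entries gives $v(a)=v(b)$; so $\leq_{\mathcal{F}}$ is an order.

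For ``order $\Rightarrow$ complete'': I would first argue that every Chan--Maclagan order is already of the form ``weight vector plus term \emph{order}.'' Indeed, if $\leq$ is the Chan--Maclagan preorder of a weight $w_0\in\R^n$ and a term preorder $\tau$ with defining vectors $w_1,\ldots,w_m$, then one computes that the only identifications $\leq$ makes beyond $v(a)=v(b),\ \ubar u=\ubar\mu$ come from nonzero $\ell\in\Z^n$ with $\langle w_i,\ell\rangle=0$ for $1\le i\le m$ and $\langle w_0,\ell\rangle\in\Gamma=v(k^\times)$; when $\leq$ is an order there is no such $\ell$, so adjoining vectors $w_{m+1},\ldots$ until the $w_i$ span $\R^n$ (i.e. extending $\tau$ to a term order) leaves $\leq$ unchanged, the extra coordinates being consulted only on pairs whose exponent difference is such an $\ell$. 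Thus $\leq$ is the Chan--Maclagan order of a datum $(w_0;w_1,\ldots)$ with spanning $w_i$, and the flag attached to this datum by Corollary~\ref{coro:explicitBijectionsContAndThreeOthers} has $\operatorname{aff}P_i=w_0+\operatorname{span}(w_1,\ldots,w_i)$, hence (after discarding any redundant steps) $\dim P_i=i$: it is complete.

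The step I expect to be the main obstacle is making the ``explicit form'' of the correspondence fully precise and reconciling it with the definition of $\Gamma$-local equivalence: concretely, (i) verifying that a class of flags whose $P_0$ is a point really does correspond to the lexicographic comparison displayed above, with no extra identifications arising from the merely polyhedral (rather than linear) nature of the $P_i$; (ii) checking in the second implication that the flag produced from the extended datum lies in the same $\Gamma$-local equivalence class as the flag originally attached to $\leq$ — equivalently, that the extension step genuinely does not alter $\Phi(\leq)$; and (iii) the bookkeeping ensuring the witness $\ell$ can be taken integral (so that it is a difference of exponent vectors) and that coefficients $a,b\in k^\times$ realizing any prescribed value in $\Gamma$ exist. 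Once these points are settled, the linear-algebra core sketched above completes the proof.
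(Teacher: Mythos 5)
Your plan is sound and the linear-algebra core is correct, but the route through ``complete $\Rightarrow$ order'' is genuinely different from the paper's, and your ``order $\Rightarrow$ complete'' is the same idea packaged differently. A few remarks on each, and on your stated worry-points.

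For ``complete $\Rightarrow$ order,'' the paper does not argue via the defining matrix at all. It instead invokes Proposition~\ref{prop: min-dim}: a complete flag forces every $\Gamma$-rational neighborhood to be full-dimensional (it meets all $n+1$ affinely independent relative interiors), so the minimum dimension in $\calf_P$ equals $\dim N_\R$, whence $\HT(P)=0$, and a height-zero prime in $\ContBase{\base}\base[\Mon]$ gives an order by a cited result from their earlier work. Your argument — pick $\xi_i\in\calp_i\sdrop\calp_{i-1}$, observe that the defining matrix $\begin{pmatrix}1&\xi_0\\ \vdots&\vdots\\ 1&\xi_n\end{pmatrix}$ has trivial kernel because the $\xi_i$ are affinely independent, hence $\equiv_P$ collapses only when $v(a)=v(b)$ and $\ubar u=\ubar\mu$ — is a self-contained, more elementary proof of the same thing; it buys independence from the height/transcendence-degree machinery, at the cost of not illustrating the paper's general framework.

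For ``order $\Rightarrow$ complete,'' both you and the paper exploit that an order never ``uses'' rows beyond the point where the exponent vectors already coincide, so adjoining an extra defining row (with an affinely independent new point) does not change the prime. The paper phrases this as a maximality-plus-contradiction argument on the length $k$ of a flag realizing $P$: if $k<\dim N_\R$ then appending $\xi_{k+1}$ affinely independent of $\xi_0,\ldots,\xi_k$ gives a longer flag with the same prime, contradicting maximality. Your direct ``extend the datum until it spans'' is logically equivalent; the paper's contradiction framing is slightly cleaner because it avoids the bookkeeping you flag about discarding redundant steps, and because it works with the defining-matrix picture already developed in the paper rather than rephrasing in terms of weight-plus-term-preorder.

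On your three worry-points: (i) is handled by Definition~\ref{def:PrimeOfAFlag} — the prime of a flag is defined precisely by choosing representatives $w_i$ in $\calc_i\sdrop\calc_{i-1}$ and forming the matrix, so the lexicographic description you use is exactly the definition, and no extra identifications from the polyhedral structure arise. (ii) is not actually a concern: once you know the extended datum produces the \emph{same prime}, Corollary~\ref{coro:FlagsGiveSamePrimeIffLocallyEquiv} already guarantees the two flags are locally equivalent; you do not need a separate check. (iii) is handled by $\Mon\cong\Z^n$ and $\Gamma=v(k^\times)$. So the obstacles you foresee are all resolvable from the paper's preliminaries, and your argument, once written out, is correct.
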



The classification provided by Theorem~\ref{thm:IntroClassifyCMOrders} can be seen as complementary to an earlier result in the non-valuated case.
Usual 
monomial orders have been classified in \cite{Rob85} in terms of their defining matrices. Since two matrices can give the same monomial order, the classification proceeds by providing a canonical representative for the equivalence class. In contrast, Corollary~\ref{coro:DoTheseMatricesGiveTheSamePrime} gives an alternative, geometric criterion for when two matrices give the same valuated or non-valuated monomial order.

We now interpret the Chan-Maclagan preorders in the context of (additively idempotent) semirings, motivated by adic geometry.
We first recall that for a topological (usually f-adic) ring $R$  the set of equivalence classes of continuous valuations $v:R\to G\cup\{0\}$, where $G$ can be any multiplicatively-written totally ordered abelian group, is called the continuous spectrum of $R$, denoted $\operatorname{Cont}R$. To consider continuity of valuations, a certain canonical topology is placed on $G\cup\{0\}$.

The semiring analogue of a valuation is a homomorphism to a totally ordered semifield.
Just as equivalence classes of homomorphisms from a ring $R$ to a field are given by prime ideals, equivalence classes of valuations on a semiring $A$ are given by certain equivalence relations on $A$ called \emph{prime congruences}\footnote{first introduced in \cite{JM17}}.

For any topological semiring $A$ we consider the set of prime congruences $P$ on $A$ for which $A/P$ is not finite (a technical hypothesis that is redundant in the cases we consider) and the map from $A$ to the residue semifield $\kappa(P)$ is continuous. Here $\kappa(P)$, the semifield generated by $A/P$, is given a canonical topology. We call the set of such $P$ the \emph{continuous spectrum of $A$}, and denote it by $\Cont A$. These spaces are directly linked to the spaces that occur in adic geometry: a continuous generalized valuation in the sense of \cite[Definition 2.5.1]{GG16} from a ring $R$ to a semiring $A$ induces a map from $\Cont{A}$ to $\Cont{R}$.

In this language, the set of Chan-Maclagan preorders on $k[x_1^{\pm1},\ldots,x_n^{\pm1}]$ is canonically identified with the continuous spectrum of $\base[x_1^{\pm1},\ldots,x_n^{\pm1}]$. Here $\base$ is the set $\Gamma\cup\{-\infty\}$ endowed with operations that make it a totally ordered semifield, and $\base[x_1^{\pm1},\ldots,x_n^{\pm1}]$ is given a topology coming from the inclusion map $\base\into\base[x_1^{\pm1},\ldots,x_n^{\pm1}]$. We denote this continuous spectrum by $\ContBase{\base}\base[x_1^{\pm1},\ldots,x_n^{\pm1}]$. Thus Theorem~\ref{thm:ClassifyCMPreorders} can be restated as follows.
\repthm{\ref{thm:ClassifyCMPreorders}'}{
There is an explicit bijection from $\ContBase{\base}\base[x_1^{\pm1},\ldots,x_n^{\pm1}]$ to the set of $\Gamma$-local equivalence classes of flags of polyhedra in $\R^n$.}

The motivation to interpret certain valuated monomial preorders as points on a tropical adic space, and thus prime congruences, comes from the connection between tropical geometry and Berkovich spaces. It is also inspired by the broader program to endow tropical varieties with more structure; in \cite{FM22} we take an analytic approach to doing so, which led us to the results in this work. There are alternative algebraic approaches using many different frameworks. Among these are blueprints \cite{Lor15},  tropical ideals \cite{MR18}, tropical schemes \cite{GG16}, super-tropical algebra \cite{IR10}, and systems \cite{R18}. 

Our last major result gives an alternative classification of the points of $\ContBase{\base}\base[x_1^{\pm1},\ldots,x_n^{\pm1}]$. This classification is reminiscent of a theorem for adic spaces.

\begin{thma}[Theorem~\ref{thm:BijectionFiltersAndCont}]
There is an explicit bijection between $\ContBase{\base}\base[x_1^{\pm1},\ldots,x_n^{\pm1}]$ and the set of prime filters $\calf$ on the \BigLarry\ in $\R^n$ such that $\calf$ contains a polytope.
\end{thma}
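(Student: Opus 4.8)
The plan is to construct the bijection directly, via the dictionary between $\Gamma$-rational half-spaces and comparisons of monomials, using throughout the canonical identification recalled above of $\ContBase{\base}\base[x_1^{\pm1},\ldots,x_n^{\pm1}]$ with the set of Chan-Maclagan preorders $\preceq$; one could instead route through the flags of polyhedra of the reformulation of Theorem~\ref{thm:ClassifyCMPreorders}, but the direct approach is cleaner here. A $\Gamma$-rational closed half-space $H=\{w\in\R^n:\langle a,w\rangle\le b\}$, with $a\in\Z^n$ and $b\in\Gamma$, translates, on writing $a=a_+-a_-$ for its positive and negative parts, into the comparison $\ubar{x}^{a_+}\preceq c_b\,\ubar{x}^{a_-}$, where $c_b\in\base$ has value $b$. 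A $\Gamma$-rational polyhedral set $S$, being a finite union of finite intersections of such half-spaces, then translates into a Boolean combination $\Phi_S$ of monomial comparisons, and I set $\calf_\preceq=\{\,S\ :\ \preceq\text{ satisfies }\Phi_S\,\}$. This is independent of the chosen presentation of $S$: a Chan-Maclagan preorder corresponds to an honest point of $G^n$, where $G\supseteq\Gamma$ is the value group of its residue semifield, and $\Gamma$-rational polyhedral sets base-change compatibly along $\Gamma\hookrightarrow G$, so $\preceq$ satisfies $\Phi_S$ exactly when that point lies in the base change $S_G$. Because that point is honest, the inclusions, intersections and unions of the sets $S_G$ immediately give that $\calf_\preceq$ is upward closed, closed under finite intersection — hence a filter — and prime. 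It contains a polytope because the Chan-Maclagan condition (which is the content of the continuity requirement defining $\ContBase{\base}$) says $\preceq$ comes from a weight vector in $\R^n$, so the coordinates of the point lie between elements of $\Gamma$, using that $v$ is nontrivial; a suitable $\Gamma$-rational box then belongs to $\calf_\preceq$.

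The crux is that $\preceq\mapsto\calf_\preceq$ is a bijection onto the set of prime filters that contain a polytope. The key lemma is that a prime filter $\calf$ on the \BigLarry\ is determined by the set of $\Gamma$-rational half-spaces it contains: writing $S=\bigcup_j\bigcap_i H_{ij}$ and using upward closure, closure under intersection, and primeness, one gets $S\in\calf$ if and only if, for some $j$, one has $H_{ij}\in\calf$ for all $i$. Since the half-spaces in $\calf_\preceq$ are exactly those whose comparison holds in $\preceq$, the lemma recovers $\preceq$ from $\calf_\preceq$, so the map is injective. For surjectivity, given a prime filter $\calf$ containing a polytope $Q$, I define a relation on monomials by declaring $c\,\ubar{x}^a\preceq_\calf c'\,\ubar{x}^{a'}$ whenever $\{w:\langle a-a',w\rangle\le v(c')-v(c)\}\in\calf$. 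This is a valuated monomial preorder: totality follows from primeness together with $\R^n\in\calf$, and transitivity, multiplicativity and compatibility with $v$ follow from the behavior of half-spaces under intersection. The set $Z=\bigcap_{S\in\calf}S$ is nonempty and compact, since the sets $S\cap Q$ are closed subsets of the compact set $Q$ with the finite intersection property; and primeness forces $Z$ to be a single point $w_0$, for any two points of $Z$ could be separated by a $\Gamma$-rational hyperplane, one of whose two closed sides would then lie in $\calf$ yet omit a point of $Z$. By the same compactness argument $\calf$ contains $\Gamma$-rational polyhedral sets inside every neighborhood of $w_0$; one uses this to show that $\preceq_\calf$ refines the weight preorder of $w_0$, so — as $w_0\in Q$ has real coordinates — $\preceq_\calf$ is a Chan-Maclagan preorder, with its ties over $w_0$ governed by a term preorder. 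By construction $\calf_{\preceq_\calf}$ has the same half-spaces as $\calf$, so $\calf_{\preceq_\calf}=\calf$ by the key lemma, completing surjectivity.

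I expect the main obstacle to be the final part of surjectivity: extracting the real weight vector $w_0=Z$ from $\calf$ and proving that $\preceq_\calf$ refines $\preceq_{w_0}$, so that $\preceq_\calf$ is genuinely a Chan-Maclagan preorder — this is where the polyhedral input (that $Z$ is a single honest point and that $\calf$ contracts onto it) must be combined with the precise definition of the Chan-Maclagan condition. A secondary delicate point is the presentation-independence in the first paragraph, which rests on the compatibility of $\Gamma$-rational polyhedral sets with base change along $\Gamma\hookrightarrow G$; the remaining verifications are routine manipulation of filters and half-spaces.
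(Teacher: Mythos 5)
Your outline is in the same spirit as the paper's proof, but the technical heart is missing. The assertion that ``$\Gamma$-rational polyhedral sets base-change compatibly along $\Gamma\hookrightarrow G$'' is precisely what needs to be \emph{proved}, and it is far from a routine manipulation of filters. Concretely, you need: if two Boolean combinations of $\Gamma$-rational linear inequalities cut out the same subset of $\R^n$, then the point $\xi\in G^n$ attached to $\preceq$ satisfies one iff it satisfies the other, even when $G$ has rank $>1$ and hence does not embed in $\R$. The paper supplies exactly this in Lemma~\ref{lem: G-rational-Farkas}, a rational Farkas-type statement (drawn from \cite{Fri19}): a \emph{nonempty} inclusion $\bigcap_l R(1_\base,a_l\chi^{u_l})\subseteq R(1_\base,a\chi^u)$ of polyhedra forces an algebraic certificate $b\,(a\chi^u)^m=\prod_l(a_l\chi^{u_l})^{m_l}$ with $b\geq1_\base$, and it is that certificate --- not the $\R$-inclusion itself --- that transfers to $\kappa(P)$. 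The nonemptiness hypothesis matters (Remark~\ref{remark:ThisFailsForCones} shows the analogous statement for cones is false), which is one place the ``contains a polytope'' condition is doing real work. Until you prove such a lemma, $\calf_\preceq$ is not known to be well-defined, and the final step $\calf_{\preceq_\calf}=\calf$ (via your key lemma about half-spaces) inherits the same hole.

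You have also inverted the assessment of where the difficulty lies. Extracting $w_0=Z=\bigcap_{S\in\calf}S$ and showing that $\preceq_\calf$ refines $\preceq_{w_0}$ does work --- compactness inside the polytope $Q$ plus primeness gives $Z=\{w_0\}$, and the same finite-intersection argument yields ``$\calf$ contains a polyhedral set inside every neighborhood of $w_0$'' --- but the paper avoids all of this. The continuity criterion in Proposition~\ref{prop: side_claim}(4) only requires that every term $a\chi^u$ be $\preceq_\calf$-bounded below by some $b\in\base^\times$, and this drops out immediately by minimizing $\xi\mapsto\log(a)+\langle\xi,u\rangle$ over the polytope $Q\in\calf$ and choosing $\beta\in\Gamma$ below that minimum; no $Z$ or $w_0$ is needed. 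So the part you flag as the ``main obstacle'' is the easy part, while the Farkas/transfer lemma you call ``secondary'' is the genuine gap.
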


This bijection between points of $\Cont_\base\base[x_1^{\pm1},\ldots,x_n^{\pm 1}]$ and prime filters on the \BigLarry\ is of the same flavor as certain results in \cite{PS95}. If $X$ is a rigid affinoid space, then it is shown in \cite{PS95} that the points of the Huber adic space corresponding to $X$ are in bijection with the prime filters on the lattice of special subsets of $X$.

We also show that, for any $P\in\Cont_\base\base[x_1^{\pm1},\ldots,x_n^{\pm1}]$, the minimum dimension of any element of the corresponding filter is equal to the transcendence degree of the semifield extension $\kappa(P)/\base$; see Proposition~\ref{prop: min-dim}.

\addtocontents{toc}{\protect\setcounter{tocdepth}{-1}}
\section*{Acknowledgements}

The authors thank Hernán Iriarte for exciting discussions and Bernd Sturmfels for pointing us to the work of Vaccon and Verron in valuated Gr\"{o}bner theory. 
The second author is partially supported by Louisiana Board of Regents Targeted Enhancement Grant number 090ENH-21.

\addtocontents{toc}{\protect\setcounter{tocdepth}{2}}

\section{Preliminaries}\label{sect:prelim}

\begin{definition}

By a {\it semiring} $R$ we mean a commutative semiring with multiplicative unit. That is, $R$ is a set which is a commutative monoid with respect to each of two binary operations: an addition operation $+_R$, whose identity is denoted $0_R$, and a multiplication operation $\cdot_R$, whose identity is denoted $1_R$. Furthermore, we require that $0_R\neq1_R$, $0_R$ is multiplicatively absorbing, and multiplication distributes over addition. We omit the subscripts from the operations whenever this will not cause ambiguity. A {\it semifield} is a semiring in which all nonzero elements have a multiplicative inverse.
\end{definition}

\begin{defi}
We call a semiring $R$ {\it additively idempotent} if $a+a = a$ for all $a \in R$. We refer to additively idempotent semirings as just {\it idempotent}.
\end{defi}

\noindent If $R$ is an idempotent semiring, then the addition defines a canonical partial order on $R$ in the following way: $a \geq b \Leftrightarrow a + b = a$.
With respect to this order, $a+b$ is the least upper bound of $a$ and $b$. When we consider a totally ordered idempotent semiring, we mean an idempotent semiring for which this canonical order is a total order. 
\par\medskip
\noindent Some of the idempotent semifields that we use throughout the paper are:
\begin{itemize}
    \item The {\it Boolean semifield} $\mathbb{B}$ is the semifield with two elements $\{1,0\}$, where $1$ is the multiplicative identity, $0$ is the additive identity and $1+1 = 1$. $\B$ is the only finite additively idempotent semifield. 
    \item The {\it tropical semifield} $\T$ is defined on the set $\R  \cup \{-\infty\} $, by setting the $+$ operation to be the usual maximum and the multiplication operation to be the usual addition, with $-\infty = 0_\T$. 
    \item The semifield $\Q_{\max}$ is a sub-semifield of $\T$. As a set it is $\Q \cup \{-\infty\} $ and the operations are the restrictions of the operations on $\T$. We also use the same notation when $\Q$ is replaced by any other additive subgroup of $\R$. In fact, every sub-semifield of $\T$ arises this way.  
    \item The semifield $\RnLexSemifield{n}$ is defined on the set $\R^n  \cup \{-\infty\} $, by setting the addition operation to be lexicographical maximum and the multiplication operation to be the usual pointwise addition.
\end{itemize}

Throughout this paper all monoids, rings and semirings are assumed to be commutative. All rings and semirings have a multiplicative unit. Whenever we use the word semiring without further qualification, we refer to an additively idempotent semiring.
Our focus will be on totally ordered semifields which are different from $\B$. Henceforth, when we refer to totally ordered semifields, we implicitly assume that they are not isomorphic to $\B$. In particular, whenever we refer to a sub-semifield of $\T$, we implicitly assume that it is not $\B$. Totally ordered semifields can be seen as the image of a non-archimedean valuation. The semifield $\B$ then is the image of the trivial valuation.

In order to distinguish when we are considering a real number as being in $\R$ or being in $\T$ we introduce some notation. For a real number $a$, we let $t^a$ denote the corresponding element of $\T$. In the same vein, given $\mathfrak{a}\in\T$, we write $\log(\mathfrak{a})$ for the corresponding element of $\R\cup\{-\infty\}$. This notation is motivated as follows.
Given a non-archimedean valuation $\nu: K \rightarrow \R\cup\{-\infty\}$ on a field and $\lambda \in \R$ with $\lambda>1$, we get a non-archimedean absolute value $|\cdot|_\nu:K\rightarrow[0, \infty)$ by setting $|x|_{\nu}=\lambda^{\nu(x)}$. Since $\T$ is isomorphic to the semifield $\big([0,\infty),\max,\cdot_{\R}\big)$, we use a notation for the correspondence between elements of $\R\cup\{-\infty\}$ and elements of $\T$ that is analogous to the notation for the correspondence between $\nu(x)$ and $|x|_{\nu}$. This notation is also convenient as we get many familiar identities such as $\log(1_{\T})=0_{\R}$ and $t^a t^b=t^{a+_{\R}b}$.

\begin{defi}
Let $R$ be a semiring and let $a \in R\sdrop\{0\}$. We say that $a$ is a \emph{cancellative element} if for all $b, c \in R$, whenever $ab=ac$ then $b=c$. If all elements of $R\sdrop\{0\}$ are cancellative, then we say that $R$ is a \textit{cancellative semiring}.
\end{defi}

\begin{definition}[Congruences]\label{def: prime_domain}\ 

\begin{itemize}

\item A \textit{congruence} $E$ on a semiring $R$ is an equivalence relation on $R$ that respects the operations of $R$.

\item The \emph{trivial congruence} on $R$ is the diagonal $\Delta\subseteq R\times R$, for which $R/\Delta\cong R$.

\item We call a proper congruence $P$ of a semiring $R$  {\it prime} if $R/P$ is totally ordered and cancellative (cf.\ Definition 2.3 and Proposition 2.10 in \cite{JM17}).


\end{itemize}

\end{definition}

We will need some more notation around prime congruences.

\begin{defi}\label{def:ResidueSemifield}
Let $P$ be a prime congruence on a semiring $A$. The {\it residue semifield of $A$ at $P$}, denoted $\kappa(P)$, is the total semiring of fractions\footnote{The total semiring of fractions is a classical concept and is defined in \cite{Gol92}. An alternative reference following the notation of this paper is in our previous work \cite[Definition 2.13]{FM22}.} of $A/P$. We denote the canonical homomorphism $A\to A/P\to\kappa(P)$ by $a\mapsto|a|_P$.
\end{defi}

\begin{notation}
Let $R$ be semiring and let $P$ be a prime on $R$. For two elements $r_1, r_2 \in R$ we say that $r_1 \leq_P r_2$ (resp. $r_1 <_P r_2$, resp. $r_1 \equiv_P r_2$) whenever $|r_1|_P \leq |r_2|_P$ (resp. $|r_1|_P <  |r_2|_P$, resp. $|r_1|_P = |r_2|_P$) in $R/P$.
\end{notation}


We now focus our attention on the case when $R$ is a monoid algebra. Given a totally ordered semifield $\base$ and a monoid $\Mon$, the corresponding monoid $\base$-algebra is denoted $\base[\Mon]$. Elements of $\base[\Mon]$ are finite sums of expressions of the form $a\chi^u$ with $a\in\base$ and $u\in\Mon$, which are called \emph{monomials} or \emph{terms}. We call elements of $\base[\Mon]$ \emph{generalized semiring polynomials} or \emph{polynomials} when no confusion will arise. We will be most interested in $\base[\Z^n]$, the \emph{semiring of tropical Laurent polynomials in $n$ variables}.

\medskip
When $\base\subseteq \TT$ we describe the prime congruences on $\base[\ZZ^n]$ in terms of their \emph{defining matrices} and the monomial preorder the give rise to. Every prime congruence on $\base[\Z^n]$ has a defining matrix; see \cite{JM17}, where it is shown that the matrix can be taken to be particularly nice. 

Let $R = \base[\ZZ^n]$ where $\base$ is a sub-semifield of $\TT$. A $k\times (n+1)$ real valued matrix $C$ such that the first column of $C$ is lexicographically greater than or equal to the zero vector gives a prime congruence on $R$ as follows:
For any monomial $m=t^a\chi^u\in\base[\Mon]$ we let $\Phi(m)=C\begin{pmatrix}a \\ u\end{pmatrix}\in\R^k$, where we view $u\in\Z^n$ as a column vector. We call $\begin{pmatrix}a \\ u\end{pmatrix}$ the \emph{exponent vector} of the monomial $m$. For any nonzero $f\in\base[\Mon]$, write $f$ as a sum of monomials $m_1,\ldots,m_r$ and set $\Phi(f)=\max_{1\leq i\leq r}\Phi(m_i)$, where the maximum is taken with respect to the lexicographic order. Finally, set $\Phi(0)=-\infty$. We specify the prime congruence $P$ by saying that $f$ and $g$ are equal modulo $P$ if $\Phi(f)=\Phi(g)$. In this case we say that $C$ is a defining matrix for $P$. The first column of $C$ is called the column corresponding to the coefficient or the column corresponding to $\base$. For $1\leq i\leq n$, we say that the $(i+1)^{\text{st}}$ column of $C$ is the column corresponding to $x_i$.


We can also explicitly describe the order that $P$ gives rise to. Given any defining matrix for $P$ and any $f,g\in R$, $f\leq_{P}g$ exactly if $\Phi(f)\leq_{lex}\Phi(g)$.


It will be convenient to know that we can choose a defining matrix with a particular form. To accomplish this, we will use the following lemma.

\begin{lemma}[\cite{FM22} Lemma 2.12, Downward gaussian elimination]\label{lemma:rref}
Let $\base$ be a sub-semifield of $\T$ and $\Mon=\ZZ^n$ and let $P$ be a prime congruence of $\base[\Mon]$ with defining matrix $C$. Then the following elementary row operations do not change the prime.
\begin{itemize}
    \item multiplying a row by a positive constant.
    \item adding a multiple of a row to any row below it. 
\end{itemize}
\end{lemma}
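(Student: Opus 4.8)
The plan is to recast each allowed operation as post-composition of the monomial evaluation map with a suitable linear automorphism of $\R^k$, and then to observe that such a post-composition cannot change which polynomials are identified. Performing one of the two row operations on $C$ produces $C' = EC$, where $E \in \mathrm{GL}_k(\R)$ is the corresponding elementary matrix: either $E$ is the identity except for an entry $\lambda > 0$ in the $j$-th diagonal slot (scaling row $j$), or $E = I + c\, e_i e_j^{\top}$ with $i > j$ and $c \in \R$ (adding $c$ times row $j$ to the lower row $i$). Since for a monomial $m = t^a\chi^u$ we have $\Phi'(m) = C'\binom{a}{u} = E\Phi(m)$, and $\Phi'(0) = -\infty = \Phi(0)$, everything will come down to how $E$ interacts with the lexicographic order.

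The crux is to show that $E$ is a lexicographic-order automorphism of $\R^k$: $v \leq_{\lex} w$ if and only if $Ev \leq_{\lex} Ew$. Since $E$ is invertible with $E^{-1}$ an elementary matrix of exactly the same shape (scaling by $\lambda^{-1} > 0$, respectively adding $-c$ times row $j$ to row $i$, still with $i > j$), it suffices to prove the forward direction, and by totality of $\leq_{\lex}$ only the strict case $v <_{\lex} w$ needs checking. Let $p$ be the first coordinate at which $v$ and $w$ disagree. In the scaling case $E$ acts coordinatewise by positive scalars, so $v$ and $w$ still agree in coordinates $1, \dots, p-1$ and the strict inequality at $p$ survives. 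In the transvection case one has $(Ev)_l = v_l$ for $l \neq i$ and $(Ev)_i = v_i + c v_j$, and one runs through the cases $p < i$, $p = i$, $p > i$; here $i > j$ is precisely what forces the coordinate $j$ being added in to have already been equalized for $v$ and $w$ whenever coordinate $i$ could affect the comparison, so the comparison is left intact. I expect this case analysis --- and in particular making transparent why ``below'' (i.e.\ $i > j$) is the right hypothesis, as opposed to adding a row to one above it, which can genuinely reverse $\leq_{\lex}$ --- to be the one part of the proof that requires care.

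The remainder is formal. A lexicographic-order automorphism commutes with finite lexicographic maxima, so for any nonzero $f = m_1 + \dots + m_r$ we get $\Phi'(f) = \max_{\lex}(E\Phi(m_1), \dots, E\Phi(m_r)) = E\Phi(f)$, while $\Phi'(0) = \Phi(0) = -\infty$. Also $C'$ is still an admissible defining matrix: its first column is $E c^{(1)}$, where $c^{(1)} \geq_{\lex} 0$ is the first column of $C$, and order-preservation of $E$ applied to $c^{(1)} \geq_{\lex} 0 = E \cdot 0$ gives $E c^{(1)} \geq_{\lex} 0$. Writing $P'$ for the prime congruence defined by $C'$, we then have, for all $f, g$, that $f \equiv_{P'} g \iff \Phi'(f) = \Phi'(g) \iff E\Phi(f) = E\Phi(g) \iff \Phi(f) = \Phi(g) \iff f \equiv_P g$, the middle equivalence using injectivity of $E$. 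Hence $P' = P$, as claimed.
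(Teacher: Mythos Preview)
The paper does not itself prove this lemma; it is quoted from \cite{FM22} without proof. Your argument is correct and is exactly the expected one: the allowed row operations correspond to left multiplication by lower-triangular elementary matrices with positive diagonal, and such matrices are automorphisms of $(\R^k,\leq_{\lex})$, so the map $\Phi$ changes only by post-composition with an order-preserving bijection and the induced congruence is unchanged. Your case analysis for the transvection (the three cases $p<i$, $p=i$, $p>i$, using $j<i$ to ensure coordinate $j$ has already been equalized) is clean, and you are right to note that checking the first column of $C'$ stays $\geq_{\lex}0$ is part of what needs to be verified. The same mechanism---a lower-triangular matrix with positive diagonal preserving the lexicographic order---is what the present paper invokes in its proof of Proposition~\ref{prop: autom-preserves-primes}, so your approach is fully aligned with how the authors reason elsewhere.
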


Note that, if a row of the matrix is all zero, then removing the row does not change the prime congruence that the matrix defines. Thus, using downward gaussian elimination, we can always choose a defining matrix in which the rows are linearly independent (over $\R$).


In \cite{FM22} we define a set $\ContBase{\base} A$ of certain prime congruences on $A$. When $A=\base[\Mon]$ with $M=\Z^n$, the following proposition completely characterizes $\ContBase{\base} A$.

\begin{proposition}
[\cite{FM22} Corollary 4.16]\label{coro: matrix_form}
Let $\base$ be a sub-semifield of $\T$ and let $\Mon\cong\Z^n$. Let $P$ be a prime on $\base[\Mon]$ and let $C$ be any defining matrix for $P$. Then $P\in\ContBase{\base}\base[\Mon]$ if and only if the $(1,1)$ entry of $C$ is positive. In particular, if $P\in\ContBase{\base}\base[\Mon]$, then we can choose a defining matrix for $P$ whose first column is the first standard basis vector $e_1$.
\end{proposition}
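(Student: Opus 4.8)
The plan is to read off $\ContBase{\base}\base[\Mon]$ from a defining matrix, using the description of the continuous spectrum set up in \cite{FM22}. By definition, $P\in\ContBase{\base}\base[\Mon]$ exactly when $\base[\Mon]/P$ is infinite and the canonical map $\base[\Mon]\to\kappa(P)$ is continuous for the canonical topology on $\kappa(P)$. Since the topology on $\base[\Mon]$ is the one induced by the inclusion $\base\into\base[\Mon]$, the content of continuity in this case is, roughly, that the image of $\base$ stays cofinal among the topologically small elements of $\kappa(P)$: for every $y\in\kappa(P)$ there is $a\in\Gamma$ with $y<_P t^a$. Establishing this equivalence --- carefully unwinding the topology on $\base[\Mon]$ against the canonical topology on $\kappa(P)$ --- is the step I expect to be the crux; I would either invoke the general statement of \cite{FM22} of which this corollary is an instance, or prove it directly by comparing neighbourhood bases of $0$ on the two sides.

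Granting that reduction, fix a defining matrix $C$ for $P$; using downward Gaussian elimination (Lemma~\ref{lemma:rref}) I may take the rows of $C$ to be linearly independent, so $C$ has no zero row. Write $c_0$ for the first column of $C$ and $C'$ for the remaining $n$ columns, so that $t^a\mapsto\Phi(t^a)=ac_0$ and $\chi^u\mapsto\Phi(\chi^u)=C'u$ inside $\RnLexSemifield{k}$; in particular the image of $\base$ in $\kappa(P)$ is the subgroup $\Gamma c_0$ of $(\R^k,\leq_{\mathrm{lex}})$. If the $(1,1)$ entry $(c_0)_1$ is positive then, multiplying the first row of $C$ by a positive scalar and then subtracting suitable multiples of it from the lower rows (Lemma~\ref{lemma:rref} again), I may assume $c_0=e_1$. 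Now $|t^N|_P$ has first lexicographic coordinate $N$, which exceeds the first coordinate of any fixed element of $\kappa(P)$ once $N$ is large, so $\Gamma c_0$ is cofinal; moreover $\base[\Mon]/P$ is infinite, since $t^a\mapsto|t^a|_P$ is injective (as $c_0\neq 0$) and $\Gamma\neq 0$. Hence $P\in\ContBase{\base}\base[\Mon]$, and this also yields the ``in particular'' clause.

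For the reverse implication I argue by contraposition. Suppose $(c_0)_1=0$. Since the rows of $C$ are linearly independent its first row is nonzero, and as its first entry is $(c_0)_1=0$, some later entry is nonzero --- say the entry $r\neq 0$ in the column corresponding to $x_j$. Then whichever of $\chi^{e_j}$, $\chi^{-e_j}$ has image in $\kappa(P)$ with first lexicographic coordinate $|r|>0$ is an element that no $|t^a|_P=ac_0$ can dominate, because every $ac_0$ has first coordinate $0$; thus the image of $\base$ fails to be cofinal, and $P\notin\ContBase{\base}\base[\Mon]$. Combining the two directions gives the asserted equivalence; and since $\ContBase{\base}\base[\Mon]$ depends only on $P$, it follows along the way that positivity of the $(1,1)$ entry is independent of the chosen full-rank defining matrix.
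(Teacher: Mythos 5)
This proposition is imported from \cite{FM22} (Corollary 4.16); the paper does not prove it, so there is no internal proof to compare against. Evaluated on its own merits, your argument is essentially correct and is close in spirit to how the cited result is established. The one place you explicitly hedge---reducing ``continuity of $\base[\Mon]\to\kappa(P)$'' to the statement that the image of $\base^\times$ is cofinal/coinitial in $\kappa(P)^\times$---is in fact not something you need to re-derive from the topology of $\kappa(P)$: it is exactly condition~(4) in Proposition~\ref{prop: side_claim} (also imported from \cite{FM22}, as Proposition 9.10 there). You state the condition as cofinality from above rather than the paper's coinitiality from below, but these coincide because $\kappa(P)^\times$ is a group and inversion reverses the order, and because every nonzero element of $\kappa(P)$ is of the form $|a\chi^u|_P$ for a term $a\chi^u$. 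So if you invoke Proposition~\ref{prop: side_claim} at the point you flag, the rest of your argument---scaling and downward Gaussian elimination to put $c_0=e_1$ when $(c_0)_1>0$, checking that $\Gamma c_0$ is then lex-cofinal, and in the converse direction exhibiting $\chi^{\pm e_j}$ as a term that the image of $\base^\times$ fails to escape when $(c_0)_1=0$ but the first row is nonzero---goes through cleanly and also delivers the ``in particular'' clause. One small point worth being explicit about: your reduction to a full-rank defining matrix tacitly fixes a choice, and the sign of the $(1,1)$ entry is \emph{a priori} only well-defined modulo row operations; you do observe at the end that $\Cont_\base$ depends only on $P$, which closes that loop for full-rank matrices, and this is the natural scope of the statement (a defining matrix with a zero first row must be pruned before the $(1,1)$ entry carries meaning).
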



By $\Terms{\base[\Mon]}$ we denote the multiplicative monoid of (nonzero) terms $a\chi^u$ of $\base[\Mon]$; if $\Mon$ is a group, then $\Terms{\base[\Mon]}$ is a group.

\begin{proposition}[\cite{FM22} Proposition 9.10]\label{prop: side_claim}
Let $\Mon\cong\Z^n$. A relation $\preceq$ on $\Terms{\base[\Mon]}$ is the same as $\leq_P$ for a prime $P$ on $\base[\Mon]$ if and only if the following conditions hold:
\begin{enumerate}
    \item[(1)] $\preceq$ is a total preorder, 
    \item[(2)] $\preceq$ is multiplicative, and
    \item[(3)] $\preceq$ respects the order on $\base$.
\end{enumerate}
Moreover, $P\in\ContBase{\base}{\base[\Mon]}$ if and only if
\begin{enumerate}[resume]
    \item[(4)] for any term $a\chi^u$ of $\base[\Mon]$, $\exists b \in \base^\times$, such that $b \prec a\chi^u$.
\end{enumerate}
\end{proposition}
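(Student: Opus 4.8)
The plan is to prove the two assertions separately. For the first equivalence, the forward implication is routine: if $\preceq$ coincides with $\leq_P$, then (1) holds because $R/P$ is totally ordered, (2) holds because $\base[\Mon]\to R/P$ is a semiring homomorphism, and (3) holds because every homomorphism of idempotent semirings preserves the relation ``$a+b=a$'' and is therefore monotone for the canonical orders, applied to $\base\hookrightarrow\base[\Mon]\to R/P$ (this composite is an order-embedding, not merely monotone, since $P$ is a $\base$-algebra congruence).

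For the converse I would recover $P$ from $\preceq$ by the leading-term construction. Since $\preceq$ is a total preorder, each nonzero $f=\sum_i m_i$ has a $\preceq$-maximal monomial among the $m_i$; let $L(f)$ be its $\preceq$-class, let $L(0)$ be a formal minimum, and declare $f\equiv_P g$ iff $L(f)=L(g)$. One then checks that $\equiv_P$ is a congruence: every monomial of $f+g$ is a monomial of $f$ or of $g$ (like monomials combine by keeping the larger $\base$-coefficient), and together with (3) this gives $L(f+g)=\max_\preceq\{L(f),L(g)\}$, so $\equiv_P$ is additive; and multiplicativity of $\preceq$ shows every monomial of $fg$ is $\preceq$-dominated by the product of the leading monomials of $f$ and $g$, which occurs in $fg$ (up to absorption into a like monomial of larger $\base$-coefficient, which by (3) only moves it up within its $\preceq$-class), so $L(fg)=L(f)L(g)$ and $\equiv_P$ is multiplicative. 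Finally, $[f]\mapsto L(f)$ identifies $R/P$ with $(G/H)\cup\{0\}$, where $G=\base^\times\times\Mon$ is the group of nonzero monomials and $H=\{g\in G:g\equiv_\preceq 1\}$ is a subgroup (by multiplicativity of $\preceq$); this carries multiplication to the group law of the totally ordered abelian group $G/H$ and addition to $\preceq$-maximum, so $R/P$ is a totally ordered semifield — here it matters that $\Mon\cong\Z^n$ is a group, so that monomial inverses lie in $\base[\Mon]$ — and in particular cancellative, whence $P$ is prime, and by construction $\leq_P$ restricts to $\preceq$ on monomials. (Strictly this should be stated for proper $P$, i.e. one assumes $\preceq$ nontrivial, equivalently reads (3) as an embedding of ordered sets, which excludes the trivial preorder.)

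For the ``moreover'' statement, choose via Lemma~\ref{lemma:rref} a defining matrix $C$ for $P$ with linearly independent rows, and let $c_1\geq_{\lex}0$ be its first column; by Proposition~\ref{coro: matrix_form}, $P\in\ContBase{\base}\base[\Mon]$ iff the entry $c_{1,1}$ is positive. For a monomial $a\chi^u$ and $b\in\base^\times$, the relation ``$b\prec a\chi^u$'' unwinds to $(\log b)\,c_1<_{\lex}C\binom{\log a}{u}$. If $c_{1,1}>0$, then since $\Gamma=\log(\base^\times)$ is unbounded below we may pick $\log b$ negative enough that the first coordinate of $(\log b)c_1$ lies below that of $C\binom{\log a}{u}$, establishing (4) for every monomial. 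If $c_{1,1}=0$, then the first row of $C$ is nonzero with vanishing first entry, so some $u\in\Z^n$ makes the first coordinate of $C\binom{0}{u}$ strictly negative, while every $(\log b)c_1$ has first coordinate $0$; hence no $b\in\base^\times$ satisfies $b\prec\chi^u$ and (4) fails. Thus (4) $\iff c_{1,1}>0\iff P\in\ContBase{\base}\base[\Mon]$.

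I expect the converse of the first assertion to be the main obstacle — specifically, verifying that the leading-term rule genuinely defines a congruence (the identity $L(fg)=L(f)L(g)$ and the bookkeeping for repeated monomials) and then cleanly identifying the quotient as a totally ordered semifield. Once Proposition~\ref{coro: matrix_form} is in hand, the $\ContBase{\base}$ refinement is just the short computation above.
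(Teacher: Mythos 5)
The paper cites this result as [FM22] Proposition~9.10 without reproving it, so there is no in-paper argument to compare against; I will assess your proof on its own terms. The core argument is correct: the leading-term construction $f\mapsto L(f)$ is the right way to recover the congruence from a preorder on terms, your verification that $L(f+g)=\max_{\preceq}\{L(f),L(g)\}$ and $L(fg)=L(f)L(g)$ is sound, the identification $R/P\cong(G/H)\cup\{0\}$ gives cancellativity and total order, and the ``moreover'' part reduces correctly via Lemma~\ref{lemma:rref} and Proposition~\ref{coro: matrix_form} to checking the sign of the $(1,1)$ entry of a row-independent defining matrix.

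One flaw is the parenthetical assertion that the composite $\base\hookrightarrow\base[\Mon]\to\base[\Mon]/P$ ``is an order-embedding, not merely monotone, since $P$ is a $\base$-algebra congruence.'' This is false: being a $\base$-algebra congruence does not force the composite to be injective on $\base$. For example, the prime on $\base[x^{\pm1}]$ with defining matrix $\begin{pmatrix}0&1\end{pmatrix}$ collapses all of $\base^\times$ to $1_{\kappa(P)}$. Monotonicity, which is all that condition~(3) requires, does follow from $P$ being a congruence, so this slip does not damage the proof. Relatedly, your closing caveat is imprecise: reading~(3) as an order-embedding is strictly stronger than merely excluding the trivial preorder (it would also exclude primes such as the one just described, which the proposition must cover). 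In fact, if ``term'' tacitly excludes $0$ — as condition~(4) forces, since no $b\in\base^\times$ can satisfy $b\prec 0$ — then the trivial preorder on nonzero terms is exactly $\leq_P$ for the prime whose defining matrix is the zero matrix (with $\kappa(P)\cong\B$), and no exception is needed at all.
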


\begin{defi}
The \emph{height} of a prime congruence $P$ on a semiring $A$ is the maximum length $\HT(P)$ of a chain $P_0\subsetneq P_1\subsetneq\cdots\subsetneq P_{k}=P$ of primes under $P$.
\end{defi}

In \cite{JM17} the \emph{dimension} of a semiring $A$, denoted $\dim A$, is defined to be the number of strict inclusions in a chain of prime congruences on $A$ of maximal length. We refine this notion in the next definition. 

\begin{defi}
When $\base \subseteq \T$ is a sub-semifield and $A$ is a $\base$-algebra, the \emph{relative dimension of $A$ over} $\base$ is the number $\dim_\base A$ of strict inclusions in a longest chain of prime congruences in $\ContBase{\base}{A}$.

\end{defi}

We illustrate the distinction between dimension and relative dimension in the following example.

\begin{example}
Let $\base$ be a sub-semifield of $\T$. Then $\dim \base$ is equal to the number of proper convex subgroups of $\base^\times $ which is 1. To see this, note that $\dim \base$ is the largest possible length of a chain of semiring quotients of $\base$. Since every quotient of a totally ordered semifield is a totally ordered semifield, and since $\base\mapsto\base^\times$ gives an equivalence between the category of totally ordered semifields and the category of totally ordered abelian groups \cite[Theorem A.4]{FM22} this is the same as the largest possible length of a chain of totally ordered abelian group quotients of $\base^\times$. But such quotients are given by convex subgroups of $\base^\times$ and these are totally ordered by containment, so $\dim\base$ is the number of such subgroups. On the other hand, $\dim_\base \base=0$.\exEnd\end{example}


We now briefly introduce some notation from toric varieties. In this paper we follow conventions from \cite{Rab10}. We will use the notation to make a statement about the dimension of the semiring $\base[\Mon]$ and its geometric interpretation. 

Let $\Lambda$ be a finitely generated free abelian group and let $\Lambda_\R $ be the vector space $ \Lambda\otimes_{\Z}\R$. We write $N:=\Lambda^*=\Hom(\Lambda,\Z)$ and $N_\R=\Hom(\Lambda,\R)\cong N\otimes_{\Z}\R$. We will use the pairing $N_\R\times\Lambda_{\R}\to\R$ given by $(v,u)\mapsto\angbra{v,u}:=v(u)$. 
A cone $\sigma$ is a \textit{strongly convex rational polyhedral cone} in $N_\R$ if $\sigma = \sum\limits_{i=1}^{r}\R_{\geq 0}v_i$ for $v_i \in N$ and $\sigma$ contains no line. 
We denote by $\sigma^\vee$ the dual cone of $\sigma$,
$$\sigma^\vee = \{u\in \Lambda_\R : \left< v,u \right> \leq 0, \forall v\in \sigma\} = \bigcap\limits_{i = 1}^r \{u\in \Lambda_\R : \left< v_i,u \right> \leq 0\}.$$

The monoids that we consider in this paper are of the form $ M = \sigma^\vee \cap \Lambda$. Because $\sigma$ is strongly convex, we can identify $\Lambda$ with the groupification of $M$.

\begin{lemma}[\cite{FM22} Lemma 9.13]\label{lemma:HeightInToricCase}
Let $\base$ be  sub-semifield of $\T$ and let $\Mon\subseteq\Lambda$ be a toric monoid corresponding to a cone $\sigma$ in $N_{\R}$. For any $P\in\ContIntBase{\base}\base[\Mon]$, $\HT(P)=\rk(\Lambda)-\rk(\kappa(P)^\times/\base^\times)$.
\end{lemma}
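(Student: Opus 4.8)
The plan is to establish the formula $\HT(P)=\rk(\Lambda)-\rk(\kappa(P)^\times/\base^\times)$ by relating heights of prime congruences to ranks of defining matrices, and then computing the relevant rank via the structure of $\kappa(P)$.

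\textbf{Reduction to the torus case.} First I would reduce to the case $\Mon=\Z^n$ where $n=\rk(\Lambda)$. Since $P\in\ContIntBase{\base}\base[\Mon]$, the valuation of each variable is dominated by something in $\base^\times$ (condition (4) of Proposition~\ref{prop: side_claim}), which should force the prime to ``see'' all of the groupification $\Lambda$; concretely, the localization $\base[\Mon]\to\base[\Lambda]$ should induce a bijection on the relevant part of the continuous spectrum, identifying $P$ with a prime on $\base[\Lambda]\cong\base[\Z^n]$ with the same residue semifield. So without loss of generality take $\Mon=\Z^n$.

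\textbf{Height equals $n$ minus the number of rows of a reduced defining matrix.} By Proposition~\ref{coro: matrix_form} I can choose a defining matrix $C$ for $P$ whose first column is $e_1$, and by Lemma~\ref{lemma:rref} (downward Gaussian elimination) I can further take the rows of $C$ to be linearly independent over $\R$; say $C$ has $k$ rows. The key step is to show $\HT(P)=n-k+1$, or equivalently that the chain of primes below $P$ corresponds exactly to deleting rows of $C$ from the bottom up. One inclusion: truncating $C$ to its first $j$ rows gives a prime $P_j$ with $P_j\subseteq P_{j+1}\subseteq\cdots\subseteq P_k=P$, and these inclusions are strict because the rows are independent (so each added row genuinely refines the preorder on some pair of monomials); this gives $\HT(P)\geq k-1$, wait—I need to be careful about the indexing and the role of the constraint that the first column be $e_1$, which already consumes one ``degree of freedom.'' The more delicate direction is that no longer chain exists: any prime $P'\subsetneq P$ must have a defining matrix obtained from (a matrix row-equivalent to) $C$ by deleting rows, which follows because $\leq_{P'}$ is a coarsening of $\leq_P$ and the lexicographic structure forces coarsenings to come from initial row-truncations up to the allowed row operations. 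I expect this combinatorial/linear-algebra analysis to be the main obstacle, and it is likely handled by a lemma already in \cite{FM22} or \cite{JM17}; if such a statement is available I would simply cite it.

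\textbf{Identifying $k$ with a rank of $\kappa(P)$.} Finally I would compute $\rk(\kappa(P)^\times/\base^\times)$ in terms of $C$. The residue semifield $\kappa(P)$ is generated over $\base$ by the images $|\chi^{e_1}|_P,\ldots,|\chi^{e_n}|_P$, and the value group $\kappa(P)^\times$ embeds into $\RnLexSemifield{k}^\times=\R^k$ (or rather $\R^{k-1}$ after accounting for the $\base$-direction) via the map induced by $C$; the quotient $\kappa(P)^\times/\base^\times$ is then the image of $\Z^n$ under the linear map given by the last $n$ columns of $C$ modulo the $\base$-row, so its rank equals the rank of that submatrix. Since the first column of $C$ is $e_1$ and the rows are independent, this submatrix has rank $k-1$, giving $\rk(\kappa(P)^\times/\base^\times)=k-1$ and hence $\HT(P)=n-k+1=n-(k-1)=\rk(\Lambda)-\rk(\kappa(P)^\times/\base^\times)$, as desired. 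The routine parts here are checking that the map on value groups is exactly described by $C$ and that passing to $\kappa(P)$ does not change the rank computation, both of which follow directly from the definition of the defining matrix and of $\kappa(P)$ as the total semiring of fractions.
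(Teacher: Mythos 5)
This lemma is cited from \cite{FM22} (Lemma 9.13) and is not reproved in the present paper, so there is no in-paper argument to compare against; I assess the proposal on its own.

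Your chain has the inclusion direction reversed. Truncating the defining matrix $C$ to its first $j$ rows yields a \emph{coarser} congruence, hence a \emph{larger} subset of $R\times R$: the truncation $P_j$ satisfies $P_j\supseteq P$, so these primes sit \emph{above} $P$, not below it. The primes in a chain $P_0\subsetneq\cdots\subsetneq P_k=P$ contributing to $\HT(P)$ are \emph{finer} than $P$ and arise from defining matrices obtained by \emph{adding} rows, not deleting them. Your ``more delicate direction'' has the same reversal and would not go through as stated.

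More fundamentally, the two intermediate claims $\HT(P)=n-k+1$ and $\rk(\kappa(P)^\times/\base^\times)=k-1$ are both false; the errors cancel to give the right final equation, so the argument is an illusion. Take $\base=\Qmax$, $\Mon=\Z$, $C=\begin{pmatrix}1 & \sqrt{2}\end{pmatrix}$, so $n=k=1$ with independent rows and first column $e_1$. Then $\ker(C)\cap(\Q\times\Z)=\{0\}$, so no added row can genuinely refine $P$ (there is no actual exponent vector on which a new row would separate two monomials that $C$ identifies), giving $\HT(P)=0\ne n-k+1=1$; and $\kappa(P)^\times\cong\Q+\Z\sqrt2\subseteq\R$ has rank $2$, so $\rk(\kappa(P)^\times/\base^\times)=1\ne k-1=0$. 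The invariant governing both sides is $r=\rk\bigl(\ker(C)\cap(\Gamma\times\Z^n)\bigr)$, which can be anywhere from $0$ to $n+1-k$. A correct argument shows $\HT(P)=r$, since each step in a maximal chain below $P$ cuts the rank of this subgroup down by exactly one; then rank--nullity for $C$ restricted to $\Gamma\times\Z^n$ gives $\rk(\kappa(P)^\times)=(n+1)-r$, and since $\base^\times\hookrightarrow\kappa(P)^\times$ is injective (from $P\in\ContIntBase{\base}\base[\Mon]$ and Proposition~\ref{coro: matrix_form}), $\rk(\kappa(P)^\times/\base^\times)=n-r$, which is exactly what one needs.
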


\begin{proposition}[\cite{FM22} Proposition 9.18]\label{prop:rk-of-lattice-dim-of-Ssigma}
Let $\base$ be a sub-semifield of $\T$, let $\Lambda$ be a finitely generated free abelian group, let $\sigma$ be a cone in $N_{\R}$, and let $\Mon = \sigma^\vee\cap\Lambda$. Then $\dim_\base\base[\Mon]=\rk \Lambda.$
\end{proposition}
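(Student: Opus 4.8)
The plan is to prove $\dim_\base \base[\Mon] = \rk\Lambda$ by establishing the two inequalities separately, using Lemma~\ref{lemma:HeightInToricCase} together with the matrix description of primes in $\ContBase{\base}\base[\Mon]$ from Proposition~\ref{coro: matrix_form}.

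\textbf{Upper bound.} First I would show $\dim_\base\base[\Mon]\le\rk\Lambda$. Any chain of primes in $\ContBase{\base}\base[\Mon]$ of length $k$ has a top element $P$ with $\HT(P)\ge k$. By Lemma~\ref{lemma:HeightInToricCase}, $\HT(P)=\rk(\Lambda)-\rk(\kappa(P)^\times/\base^\times)\le\rk\Lambda$ since ranks are nonnegative. Hence $k\le\rk\Lambda$, so the supremum over all such chains is at most $\rk\Lambda$. (One should note that $\ContIntBase{\base}\base[\Mon]$ and $\ContBase{\base}\base[\Mon]$ agree here, or that the relevant primes lie in the interior; the notation $\ContIntBase{\base}$ versus $\ContBase{\base}$ in the cited lemma should be reconciled, but the bound on height is what matters.)

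\textbf{Lower bound.} The substantive direction is to exhibit a chain of primes in $\ContBase{\base}\base[\Mon]$ with exactly $\rk\Lambda$ strict inclusions, i.e. a chain $P_0\subsetneq P_1\subsetneq\cdots\subsetneq P_{r}$ with $r=\rk\Lambda=:n$. I would first reduce to the torus case $\Mon=\Z^n$: since $\sigma$ is strongly convex, $\Lambda$ is the groupification of $\Mon$, and $\base[\Mon]\subseteq\base[\Lambda]=\base[\Z^n]$; restriction of primes along this inclusion, or a direct argument, should let one transfer a maximal chain, but more cleanly one constructs the chain directly for $\base[\Mon]$. The natural candidates are the primes defined by the square matrices
\[
C_j=\begin{pmatrix} e_1 & \mid & B_j\end{pmatrix},
\]
where $e_1$ is the first standard basis vector (guaranteeing membership in $\ContBase{\base}\base[\Mon]$ by Proposition~\ref{coro: matrix_form}) and $B_j$ is built from the first $j$ rows of an $n\times n$ matrix whose rows are generic (e.g. a matrix with rationally independent entries, or rows spanning a complete flag of subspaces of $\R^n$ that is "positive enough" on $\Mon$). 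Taking $C_j$ to be the $(j{+}1)\times(n{+}1)$ matrix consisting of the first $j{+}1$ rows of a fixed $(n{+}1)\times(n{+}1)$ matrix in "row echelon" form with positive $(1,1)$ entry and linearly independent rows, one gets primes $P_0,\dots,P_n$. The point is that adding a row refines the preorder $\leq_{P}$, so $P_{j}\subseteq P_{j+1}$, and linear independence of the rows ensures the refinement is strict; Lemma~\ref{lemma:rref} guarantees the echelon normalization does not change the primes. One must check each $P_j$ is genuinely a prime on $\base[\Mon]$ (not just on $\base[\Lambda]$) — this is where strong convexity of $\sigma$ is used, ensuring the monomial preorder restricted to $\Mon$ still has the cancellativity needed — and that each lies in $\ContBase{\base}$, which is immediate from the first column being $e_1$.

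\textbf{Main obstacle.} I expect the delicate point to be the strict inclusions in the lower bound: showing that enlarging the defining matrix by one linearly independent row actually produces a \emph{strictly} larger congruence when we restrict attention to monomials with exponent vectors in $\{1\}\times\Mon$ rather than all of $\{1\}\times\Z^n$ — since $\Mon$ may not span, a row that distinguishes monomials over $\Z^n$ might fail to distinguish any pair of monomials in $\Mon$. Handling this requires choosing the rows of the matrix so that they remain "in general position" relative to $\Mon_\R=\Lambda_\R$ (which has full rank $n$ since $\Lambda$ is the groupification of $\Mon$), so genericity of the flag suffices, but the verification that the resulting relation is a congruence coming from a matrix and that consecutive ones differ needs care. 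An alternative, possibly cleaner, route for the lower bound is to invoke Lemma~\ref{lemma:HeightInToricCase}: exhibit a single prime $P\in\ContBase{\base}\base[\Mon]$ with $\kappa(P)^\times/\base^\times$ of rank $0$ — equivalently, a prime whose residue semifield is (an extension of) $\base$ itself — for then $\HT(P)=\rk\Lambda=n$, and unwinding the definition of height produces the desired chain of length $n$ automatically. Such a $P$ is given by a square matrix $\begin{pmatrix} e_1 \mid B\end{pmatrix}$ with $B$ an $n\times n$ matrix all of whose entries, together with $1$, are $\Q$-linearly independent (so distinct monomials over $\Z^n$, hence over $\Mon$, never tie, forcing $\kappa(P)^\times/\base^\times$ to be trivial after scaling). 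I would likely present the proof via this route, as it isolates the one nontrivial construction and then cites Lemma~\ref{lemma:HeightInToricCase} for the rest.
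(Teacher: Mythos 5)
First, a caveat: the paper does not prove this proposition — it cites it verbatim from \cite[Proposition 9.18]{FM22}. So there is no in-paper proof to compare against, and I can only assess your argument on its own terms.

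Your upper bound via Lemma~\ref{lemma:HeightInToricCase} is fine in spirit (modulo the $\ContBase{\base}$ versus $\ContIntBase{\base}$ hypothesis that you correctly flag). The problem is with the route you say you would actually present for the lower bound, and it is a genuine conceptual error, not a cosmetic one. You want a prime $P\in\ContBase{\base}\base[\Mon]$ with $\rk(\kappa(P)^\times/\base^\times)=0$, so that $\HT(P)=\rk\Lambda$. You propose a defining matrix whose entries, together with $1$, are $\Q$-linearly independent, arguing that "distinct monomials never tie, forcing $\kappa(P)^\times/\base^\times$ to be trivial." This is exactly backwards. If the pairing is injective on $\Gamma\times\Lambda$, then $\kappa(P)^\times$ contains an isomorphic copy of $\Gamma\times\Lambda$, so $\kappa(P)^\times/\base^\times\cong\Lambda$ has rank $n$, giving $\HT(P)=0$ — these are the \emph{minimal} primes (closed points), not the maximal ones. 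To get $\HT(P)=\rk\Lambda$ you need the opposite extreme: a matrix (say a single row) of the form $(1,r_1,\ldots,r_n)$ with each $r_i\in\Gamma$ (e.g.\ $r_i=0$), so that every $\chi^u$ maps into $\base^\times$, $\kappa(P)^\times=\base^\times$, and $\rk(\kappa(P)^\times/\base^\times)=0$. The relationship between "ties" and rank is inverted in your write-up, and as presented the argument establishes $\HT(P)=0$ rather than $\HT(P)=n$.

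Two smaller points. In the explicit-chain alternative, adding a row makes the congruence \emph{finer} (it distinguishes more), so the truncated matrices give $P_n\subsetneq P_{n-1}\subsetneq\cdots\subsetneq P_0$, not $P_j\subseteq P_{j+1}$ as written; the chain exists but the inclusions run the other way, and strictness depends on choosing rows so that each new row actually separates some pair $(\alpha,u)\neq(\beta,v)$ in $\Gamma\times\Lambda$ (generic irrational rows can fail this, as with the single row $(1,\sqrt2)$ over $\Gamma=\Lambda=\Z$). And even after repairing the height calculation, passing from $\HT(P)=n$ to $\dim_\base\base[\Mon]\geq n$ needs the observation that the primes in a chain below a $P\in\ContBase{\base}$ again lie in $\ContBase{\base}$; by Proposition~\ref{coro: matrix_form} this amounts to a check on the $(1,1)$ entry of their defining matrices, which you should make explicit rather than treat as automatic.
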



\section{Results}\label{sect: geom interpret}

We provide a geometric interpretation of the points of $\ContBase{\base}{\base[\Mon]}$ with $\base$ a sub-semifield of $\T$ and $\Mon$ a toric monoid. We will denote by $\Gamma:=\log(\base^\times)$ the subgroup of $\R$ corresponding to $\base$.

By \cite[Proposition 9.14]{FM22}, fixing the ideal-kernel of $P\in\ContBase{\base}{\base[\Mon]}$ decomposes $\ContBase{\base}{\base[\Mon]}$ into toric strata, which are tori. Thus, we only need to deal with the torus case, i.e., the case where $\Mon$ is a finitely generated free abelian group. 

The main results of this paper show that, in this case, there are explicit bijections between the points of $\ContBase{\base}{\base[\Mon]}$, the set of prime filters on the \BigLarry\ in $N_{\R}$ where the filter contains a polytope, and certain equivalence classes of flags of polyhedra.

\subsection{Flags and prime congruences.}
By a \emph{cone} we will mean a strongly convex polyhedral cone which is not necessarily rational.
We note that there is a bijective correspondence between the set of non-empty strongly convex polyhedra  $\mathcal{P} \subseteq N_\R$ and the set of strongly convex cones $\sigma \subseteq \R_{\geq 0} \times N_\R$, not contained in $\{0\} \times N_\R$. Under this correspondence we map a polyhedron $\mathcal{P}$ to the closed cone over it, denoted $c(\mathcal{P})$, and a cone $\calc$ to the restriction of $\calc$ to $\{1\} \times N_\R$, by which we mean the inverse image of $\calc$ under the map $N_{\R}\to\R_{\geq0}\times N_{\R}$ given by $x\mapsto(1,x)$. 

Additionally, we can think of a point $ w=(r,\xi)\in\R_{\geq0}\times N_{\R}$ as a homomorphism $ w:\Gamma\times \Mon\to\R$, mapping $(\gamma,u)\mapsto r\gamma+\angbra{\xi,u}$. We also use $\angbra{\bullet,\bullet}$ to denote the resulting pairing $(\R_{\geq0}\times N_{\R})\times(\Gamma\times \Mon)\to\R$. 
Thus, for any $\frakS\subseteq\Gamma\times\Mon$ we can consider $\frakS^\vee=\{ w\in\R_{\geq0}\times N_{\R}\;:\;\angbra{ w,\fraku}\leq0\text{ for all }\fraku\in\frakS\}.$

\begin{definition}
We denote by $\mathcal{P}_\bullet = (\mathcal{P}_0 \leq \mathcal{P}_1 \leq \cdots \leq \mathcal{P}_k)$ a 
\textit{flag of polyhedra}, where $\dim \mathcal{P}_i = i$. We assume that all polyhedra are strongly convex, non-empty and live in $N_\R$.

Similarly, we let $\calC_\bullet=(\calC_0\leq\calC_1\leq\cdots\leq\calC_{k})$ denote a \emph{flag of cones} in $\R_{\geq_0}\times N_{\R}$ with $\dim\calC_{i}=i+1$. We say that $\calC_\bullet$ is \emph{simplicial} if each $\calC_i$ is a simplicial cone.

\end{definition}

Given a flag $\calP_{\bullet}$ of polyhedra, the corresponding flag of cones is $c(\calP_\bullet)=(c(\calP_0)\leq c(\calP_1)\leq\cdots\leq c(\calP_k))$. The flags of cones $\calC_\bullet$ which arise this way are exactly those for which $\calC_0$ is not contained in $\{0\}\times N_{\R}$. For such flags $\calC_\bullet$, we write $\calC_\bullet|_{\{1\}\times N_{\R}}$ for the corresponding flag of polyhedra.

\newcommand{\matt}{C}
Consider the map taking a matrix $\matt$ whose first column is lexicographically at least 0 to the prime it defines on $\base[\Mon]$. Here, because we have not fixed an identification $\Mon\cong\Z^n$, $\matt$ is of the form $\matt=\begin{pmatrix}r_0&\xi_0\\r_1&\xi_1\\\vdots&\vdots\\r_k&\xi_k\end{pmatrix}$ where $r_0,\ldots,r_k\in\R$ and $\xi_0,\ldots,\xi_k\in N_{\R}$. We now show how this map factors through the set of simplicial flags of cones. Moreover, we show that if we restrict to matrices giving primes in $\ContBase{\base}\base[\Mon]$, the map factors through the corresponding set of flags of polyhedra.

Given a matrix $\matt$ as above, we can use downward gaussian elimination as in Lemma~\ref{lemma:rref} and removal of zero rows to get all of the entries in the first column non-negative and to get the rows linearly independent without changing the prime that $\matt$ defines. Then we let $\calC_\bullet(\matt)$ be the simplicial flag of cones defined by letting $\calC_i(\matt)$ be the cone generated by $(r_0,\xi_0),\ldots,(r_i,\xi_i)$ for each $0\leq i\leq k$.

For any simplicial flag of cones $\calC_\bullet$ in $\R_{\geq0}\times N_{\R}$, we get a prime congruence on $\base[\Mon]$ as follows.

Let $\calc_{-1}=\{0\}$ and pick points $ w_i \in \calC_i\sdrop\calc_{i-1}$ for $0 \leq i \leq k$. Since $ w_i\in \R_{\geq0}\times N_{\R}$, we can think of each $ w_i$ as a homomorphism $ w_i:\Gamma\times M\to\R$. Thus, we can consider the group homomorphism
$$\mbo{ w} = ( w_0, \ldots,  w_k):\Gamma \times \Mon \rightarrow \R^{k+1},$$ 
which preserves the order on $\Gamma$ when we give $\R^{k+1}$ the lexicographic order. Therefore, $\mbo{ w}$ gives rise to a semiring homomorphism
$$\varphi_{\mbo{ w}}:\base[\Mon] \rightarrow \RnLexSemifield{k+1}, \quad a\chi^u \mapsto (w_0, \ldots, w_k)(\log a, u).$$
We let $P_{\mbo{ w}}=\ker\ph_{\mbo{ w}}$. That is, $P_{\mbo{ w}}$ is the prime on $\base[\Mon]$ defined by the matrix 
$\begin{pmatrix} w_0\\\vdots\\ w_k\end{pmatrix}$.
By Proposition~\ref{coro: matrix_form}, $P_{\mbo{ w}}$ is in $\ContBase{\base}\base[\Mon]$ if and only if the (1,1) entry of this matrix is positive. Since $ w_0\in \calc_0\sdrop\{0\}\subseteq \R_{\geq0}\times N_{\R}$, this happens exactly when $\calc_0$ is not contained in $\{0\}\times N_{\R}$, i.e., when $\calC_\bullet=c(\calP_\bullet)$ for some flag $\calP_\bullet$ of polyhedra.

In order for this construction to give a prime defined by $\calC_\bullet$, we must show that $P_{\mbo{ w}}$ is independent of the choice of $\mbo{ w}$. Set $ v_0 =  w_0$. For $1\leq i\leq k$, $\calc_i\sdrop\calc_{i-1}$ contains a unique ray of $\calc_i$. Fix a generator of this ray and call it $v_i$. Consider the vector $\mbo{ v} = ( v_0, \ldots,  v_k)$. Analogously to $\varphi_{\mbo{ w}}$ and $P_{\mbo{ w}}$, we can define $\varphi_{\mbo{ v}}$ and $P_{\mbo{ v}}$. The following proposition gives us that $P_{\mbo{ v}}$ and $P_{\mbo{ w}}$ are the same.

\begin{proposition}\label{prop: autom-preserves-primes}
There is an automorphism $\psi$ of $\RnLexSemifield{k+1}$ given by a linear transformation on $\R^{k+1}$ such that $\varphi_{\mbo{ w}} = \psi \circ \varphi_{\mbo{ v}}$.
\end{proposition}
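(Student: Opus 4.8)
The plan is to produce the linear transformation $\psi$ explicitly as the change-of-basis matrix relating $\mbo{w}$ and $\mbo{v}$, and then check that it is an automorphism of $\RnLexSemifield{k+1}$, i.e., that it preserves both operations. The key observation is that both $\mbo{w}$ and $\mbo{v}$ are built by choosing, for each $i$, a vector in $\calC_i \setminus \calC_{i-1}$; since $\calC_i$ is the simplicial cone generated by $\calC_{i-1}$ together with one new ray, both $w_i$ and $v_i$ (for $i \geq 1$) lie in $\calC_i$ and have a strictly positive component along the new ray, modulo $\calC_{i-1}$. I would make this precise by writing, for each $i$, $w_i = \sum_{j \le i} a_{ij} v_j$ with $a_{ii} > 0$ (and $w_0 = v_0$, so $a_{00} = 1$). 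Here I use that $v_j$ for $j \le i$ span $\calC_i$ as a cone and that $w_i \in \calC_i$, so $w_i$ is a nonnegative combination of the $v_j$; the coefficient $a_{ii}$ on the top ray generator is positive precisely because $w_i \notin \calC_{i-1}$. So the matrix $A = (a_{ij})$ is lower triangular with positive diagonal entries.

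**Next I would** set $\psi: \R^{k+1} \to \R^{k+1}$ to be the linear map with matrix $A$, so that by construction $\mbo{w} = A \circ \mbo{v}$ as maps $\Gamma \times \Mon \to \R^{k+1}$, and hence $\varphi_{\mbo{w}} = \psi \circ \varphi_{\mbo{v}}$ at the level of the underlying maps of sets — one just needs to chase the definition $\varphi_{\mbo{w}}(a\chi^u) = \mbo{w}(\log a, u) = A\, \mbo{v}(\log a, u) = \psi(\varphi_{\mbo{v}}(a\chi^u))$, and note this extends from monomials to all of $\base[\Mon]$ once we know $\psi$ is a semiring homomorphism (since $\varphi_{\mbo{v}}$ is one and $\varphi_{\mbo{w}}$ is one, and a semiring homomorphism is determined by its values on monomials via the $+$ operation). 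The real content is therefore showing $\psi$ is an automorphism of $\RnLexSemifield{k+1}$: it must preserve pointwise addition on $\R^{k+1}$ (automatic, since it is $\R$-linear, and the multiplication of $\RnLexSemifield{k+1}$ is pointwise addition of vectors) and it must preserve lexicographic maximum (the semiring addition). This last point is where the lower-triangular-with-positive-diagonal structure is essential: a linear map preserves the lexicographic order on $\R^{k+1}$ if and only if its matrix is lower triangular with positive diagonal. I would prove the direction I need by induction on coordinates — if $x <_{\lex} y$, then looking at the first coordinate where they differ, say coordinate $i$, we have $x_i < y_i$ and $x_j = y_j$ for $j < i$; then $(Ax)_j = (Ay)_j$ for $j < i$ since $A$ is lower triangular, and $(Ax)_i = \sum_{j \le i} a_{ij} x_j < \sum_{j \le i} a_{ij} y_j = (Ay)_i$ because $a_{ii} > 0$ and the lower terms agree. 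Hence $Ax <_{\lex} Ay$, so $\psi$ preserves $<_{\lex}$, hence preserves $\max_{\lex}$, hence is a semiring homomorphism; it is invertible with inverse $A^{-1}$, which is again lower triangular with positive diagonal, so $\psi$ is an automorphism.

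**The main obstacle** is the justification that the coefficients $a_{ij}$ exist with $a_{ii} > 0$ — i.e., the geometric claim that $w_i$, lying in the simplicial cone $\calC_i$ but outside $\calC_{i-1}$, can be written as a nonnegative combination of $v_0, \dots, v_i$ with strictly positive coefficient on $v_i$. This uses simpliciality crucially: $\calC_i$ being simplicial means $v_0, \dots, v_i$ are linearly independent and generate $\calC_i$ as a cone, so every element of $\calC_i$ has unique nonnegative coordinates in this basis; the element $w_i$ has $i$-th coordinate zero exactly when it lies in the facet $\calC_{i-1}$, which is excluded by the choice $w_i \in \calC_i \setminus \calC_{i-1}$. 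One should double-check that $v_0, \dots, v_i$ really do span $\calC_i$ (not merely its linear span): $v_j$ for $1 \le j \le i$ is by definition the generator of the unique ray of $\calC_j$ lying in $\calC_j \setminus \calC_{j-1}$, and an easy induction shows the rays of the simplicial cone $\calC_i$ are exactly $\R_{\ge 0} v_0, \dots, \R_{\ge 0} v_i$ (the rays of $\calC_{i-1}$ together with the one new ray). Modulo that verification, everything else is the routine linear-algebra bookkeeping sketched above, and one should also record that $\varphi_{\mbo w}$ and $\varphi_{\mbo v}$ land in $\RnLexSemifield{k+1}$ rather than a proper sub-semifield, but this is immaterial since $\psi$ is defined on all of $\R^{k+1}$.
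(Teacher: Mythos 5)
Your proof is correct and takes essentially the same approach as the paper: the paper builds the same lower-triangular matrix with positive diagonal (and proves order-preservation) by induction on $k$, whereas you construct it all at once and give a direct argument that such a matrix preserves the lexicographic order. Your version is, if anything, slightly more explicit about why order-preservation plus linearity plus invertibility make $\psi$ a semifield automorphism, which the paper leaves implicit.
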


\begin{proof}
We proceed by induction on $k$. The case $k =0$ is trivial. Assume the statement is true for some $k = r$; we want to show that it is true for $k=r+1$.
Let $\tilde{\varphi}_{\mbo{ w}}$, $\tilde{\varphi}_{\mbo{ v}}$ and $\tilde{\psi}$ be the maps we obtain for the flag $(\mathcal{P}_0 \leq \mathcal{P}_1 \leq \cdots \leq \mathcal{P}_r)$. Write 
$$ w_{r+1} = \sum_{i=0}^{r+1} \alpha_i  v_i, \text{ where } \alpha_i \in \R_{\geq 0} \text{ and } \alpha_{r+1} >0.$$
Thus, we have
$$\begin{pmatrix} w_0 \\  w_1 \\ \vdots \\  w_r \\  w_{r+1} \end{pmatrix} 
=
\left(\begin{array}{@{}c|c@{}}
\tilde{\psi}
  &     \begin{matrix}
  0  \\
  0  \\
  \vdots \\
  0
  \end{matrix} \\
\hline
   \begin{matrix}
 \alpha_0 & \alpha_1 & \cdots & \alpha_r 
  \end{matrix} &
\alpha_{r+1}
\end{array}\right)
 \begin{pmatrix} v_0 \\  v_1 \\ \vdots \\  v_r \\  v_{r+1} \end{pmatrix},$$
where we call the matrix in this product $\psi$. Note that it is invertible since $\tilde{\psi}$ is and $\alpha_{r+1} >0$. Moreover, $\psi$ is order preserving, i.e., $\mbo{h} > \mbo{h}'$ implies that $\psi \mbo{h} > \psi \mbo{h'}$. We can show this by induction; the base case is trivial. If $\tilde{\psi}$ is order preserving and then $\psi$ is, since $ v_{r+1} >  v_{r+1}'$ if and only if $\alpha_{r+1}  v_{r+1}>\alpha_{r+1}  v_{r+1}'$ and $\alpha_{r+1}>0$.
\end{proof}

Now consider $\mbo{ w}'=( w'_0,\ldots, w'_k)$ with $ w'_i\in\calc_i\sdrop\calc_{i-1}$. In the same way as we got $\mbo{ v}$ from $\mbo{ w}$, we now get $\mbo{ v}'$. Since $ v_i$ and $ v'_i$ are generators of the same ray, they are positive multiples of each other. Thus, $P_{\mbo{ w}'}=P_{\mbo{ v}'}=P_{\mbo{ v}}=P_{\mbo{ w}}$. In light of this we can make the following definition. 

\begin{defi}\label{def:PrimeOfAFlag}
Let $\calc_\bullet$ be a simplicial flag of cones. The \emph{prime congruence $P_{\calc_\bullet}$ defined by $\calC_\bullet$} is $P_{\mbo{ w}}$ for any choice of $\mbo{ w}=( w_0,\ldots, w_k)$ with $ w_i\in\calc_i\sdrop\calc_{i-1}$. If $\calc_\bullet=c(\calp_\bullet)$, then $P_{\calp_\bullet}=P_{c(\calP_\bullet)}$ is the \emph{prime congruence defined by $\calp_\bullet$}.
\end{defi}

If $\calc_\bullet=\calc_\bullet(\matt)$ for some matrix $\matt$, then we can chose $ w_i$ to be the $(i+1)^\text{st}$ row of $\matt$, so $\matt$ is a defining matrix for $P_{\calc_\bullet}$. In particular, for every prime congruence $P$ on $\base[\Mon]$, there is a simplicial flag $\calc_\bullet$ of cones such that $P=P_{\calc_\bullet}$. If $P\in\ContBase{\base}\base[\Mon]$, then we know that $\calc_\bullet=c(\calp_\bullet)$ for some flag $\calp_\bullet$ of polyhedra, so $P=P_{\calp_\bullet}$.

\subsection{Equivalence of flags.}

Recall that a \textit{$\Gamma$-rational polyhedron} is a polyhedron in $N_{\R}$ which can be written as $\{x\in N_{\R}\ :\ \angbra{x,u_i}\leq\gamma_i\text{ for }i=1,\ldots,q\}$ for some $u_1,\ldots,u_q\in \Mon$ and $\gamma_1,\ldots,\gamma_q\in\Gamma$. A \textit{$\Gamma$-rational polyhedral set} is a finite union of $\Gamma$-rational polyhedra. 

A cone in $\R_{\geq0}\times N_{\R}$ is \emph{$\Gamma$-admissible} if it can be written as 
$$\{(r,x)\in \R_{\geq0}\times N_{\R}\ :\ r\gamma_i+\angbra{x,u_i}\leq0\text{ for }i=1,\ldots,q\}$$
for some $u_1,\ldots,u_q\in \Mon$ and $\gamma_1,\ldots,\gamma_q\in\Gamma$. 
A polyhedron $\calQ$ is $\Gamma$-rational if and only if $c(\calQ)$ is $\Gamma$-admissible. A cone contained in $\{0\}\times N_{\R}$ is $\Gamma$-admissible if and only if it is rational. We call a finite union of $\Gamma$-admissible cones a \emph{$\Gamma$-admissible \conoidSet}. Note that the set of $\Gamma$-rational polyhedral sets in $N_{\R}$ and the set of $\Gamma$-admissible \conoidSets\ each form a lattice when ordered by inclusion. In both of these lattices, meet and join are given by intersection and union, respectively.

\begin{definition}\label{def: gamma-rational-nbhd}

Let $\mathcal{P}_\bullet$ be a flag of polyhedra and let $\calc_\bullet$ be a flag of cones. A $\Gamma$\textit{-rational neighborhood} of $\mathcal{P}_\bullet$ is a $\Gamma$-rational polyhedron $\mathcal{Q}$ which meets the relative interior of each $\mathcal{P}_i$. A \emph{$\Gamma$-admissible neighborhood} of $\calc_\bullet$ is a $\Gamma$-admissible cone $\calD$ which meets the relative interior of each $\calc_i$.
\end{definition}

\begin{definition}\label{def: local-equiv}
We say that two flags $\mathcal{P}_\bullet$ and $\mathcal{P}'_\bullet$ of polyhedra are ($\Gamma$-)\textit{locally equivalent} if the $\Gamma$-rational neighborhoods of $\mathcal{P}_\bullet$ are exactly the $\Gamma$-rational neighborhoods of $\mathcal{P}'_\bullet$. Similarly, we say that a two flags $\calc_\bullet$ and $\calc'_\bullet$ are ($\Gamma$-)\emph{locally equivalent} if they have the same $\Gamma$-admissible neighborhoods.
\end{definition}

Note that $\calQ$ is a $\Gamma$-rational neighborhood of $\calP_\bullet$ if and only if $c(\calQ)$ is a $\Gamma$-admissible neighborhood of $c(\calP_\bullet)$. Thus $\calP_\bullet$ and $\calP'_\bullet$ are locally equivalent if and only if $c(\calP_\bullet)$ and $c(\calP'_\bullet)$ are.

The following lemma provides an alternate characterization of neighborhoods of flags. To prove it we will use the fact that a point $x$ is in the relative interior of a cone $\calC$ of dimension $k$ if and only if $x$ can be written as a positive linear combination of $k$ linearly independent points in $\calC$. See \cite[Lemmas 2.8 and 2.9]{Zieg95} for the proof of the corresponding fact about polytopes. The proof for cones is directly analogous.

\begin{lemma}

A $\Gamma$-admissible cone $\calD$ is a neighborhood of $\calc_\bullet$ if and only if $\calD$ meets $\calc_i\sdrop\calc_{i-1}$ for $0\leq i\leq k$. In particular,
a $\Gamma$-rational polyhedron $\mathcal{Q}$ is a neighborhood of $\mathcal{P}_\bullet = (\mathcal{P}_0 \leq \mathcal{P}_1 \leq \cdots \leq \mathcal{P}_k)$ if and only if $\mathcal{P}_0 \in \mathcal{Q}$ and $\mathcal{Q}$ meets $\mathcal{P}_i\setminus\mathcal{P}_{i-1}$, for $1 \leq i \leq k$.
\end{lemma}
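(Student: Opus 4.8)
The plan is to prove the cone statement first and then deduce the polyhedron statement by applying the cone statement to $c(\calP_\bullet)$, using the already-established facts that $\calQ$ is a $\Gamma$-rational neighborhood of $\calP_\bullet$ iff $c(\calQ)$ is a $\Gamma$-admissible neighborhood of $c(\calP_\bullet)$, and that $\relint c(\calQ)$ maps onto $\relint\calQ$ under restriction to $\{1\}\times N_{\R}$. So the core is: a $\Gamma$-admissible cone $\calD$ meets $\relint\calc_i$ for every $i$ if and only if $\calD$ meets $\calc_i\sdrop\calc_{i-1}$ for every $i$.

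For the forward direction, suppose $\calD$ meets $\relint\calc_i$ for all $i$. Since $\calc_{i-1}\subseteq\calc_i$ with $\dim\calc_{i-1}=\dim\calc_i-1$, the lower-dimensional face $\calc_{i-1}$ is disjoint from $\relint\calc_i$ (it lies in the boundary of $\calc_i$ relative to its own affine/linear span), so any point of $\calD\cap\relint\calc_i$ automatically lies in $\calc_i\sdrop\calc_{i-1}$; this is immediate and needs no work.

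The substantive direction is the converse. Assume $\calD$ meets $\calc_i\sdrop\calc_{i-1}$ for each $0\leq i\leq k$; I want to produce a point of $\calD$ in $\relint\calc_k$ (and similarly for each $i$, by applying the argument to the truncated flag $\calc_0\leq\cdots\leq\calc_i$, noting $\calD$ is still $\Gamma$-admissible). Pick $z_i\in\calD\cap(\calc_i\sdrop\calc_{i-1})$ for $0\le i\le k$. The key observation is that $z_0,\ldots,z_k$ are linearly independent: inductively, $z_i\notin\calc_{i-1}\supseteq\operatorname{span}(z_0,\ldots,z_{i-1})$ — wait, one must be slightly careful since $\operatorname{span}(z_0,\dots,z_{i-1})$ need only be contained in $\operatorname{span}(\calc_{i-1})$, not in $\calc_{i-1}$ itself. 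So instead I argue by dimension count: $z_0,\dots,z_i$ all lie in $\calc_i$, which has dimension $i+1$, and I claim they are linearly independent. If not, some $z_j$ ($j$ minimal witnessing dependence) lies in $\operatorname{span}(z_0,\dots,z_{j-1})\subseteq\operatorname{span}(\calc_{j-1})$. This does not immediately contradict $z_j\notin\calc_{j-1}$, so the cleaner route is: choose the $z_i$ together with the cited fact (relint of a $d$-dimensional cone = positive combinations of $d$ linearly independent cone points). Concretely, since $z_0\in\calc_0\sdrop\{0\}$ and $\calc_0$ is $1$-dimensional, $z_0$ spans $\calc_0$. Given linearly independent $z_0,\dots,z_{i-1}$ with $\calc_{i-1}\subseteq\operatorname{cone}(z_0,\dots,z_{i-1})$'s span — actually $\operatorname{span}(z_0,\dots,z_{i-1})=\operatorname{span}(\calc_{i-1})$ by dimension — the point $z_i\in\calc_i\setminus\calc_{i-1}$ has a nonzero component transverse to $\operatorname{span}(\calc_{i-1})$ (else $z_i\in\operatorname{span}(\calc_{i-1})\cap\calc_i$; this intersection is exactly $\calc_{i-1}$ since $\calc_{i-1}$ is a facet of the $(i{+}1)$-dimensional cone $\calc_i$ — here I use that $\calc_\bullet$ is a flag of cones so each $\calc_{i-1}$ is a face of $\calc_i$), contradiction. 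Hence $z_0,\dots,z_i$ are linearly independent, and by induction $z_0,\dots,z_k$ are $k+1$ linearly independent points of $\calc_k$. Then $z:=\sum_{i=0}^k z_i$ is a positive combination of $k+1$ linearly independent points of the $(k{+}1)$-dimensional cone $\calc_k$, so $z\in\relint\calc_k$ by the cited fact; and $z\in\calD$ since $\calD$ is a cone (closed under addition), i.e. convex and containing $0$, so it contains sums of its elements. Running this for each truncation $i\le k$ gives $\calD\cap\relint\calc_i\neq\emptyset$ for all $i$.

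The main obstacle is the linear-independence/transversality step: one must genuinely use that in a \emph{flag} of cones each $\calc_{i-1}$ is a \emph{face} of $\calc_i$ (not merely a subcone), so that $\operatorname{span}(\calc_{i-1})\cap\calc_i=\calc_{i-1}$; without this the set-difference condition $z_i\in\calc_i\sdrop\calc_{i-1}$ would not force $z_i$ out of $\operatorname{span}(\calc_{i-1})$. If the paper's definition of ``flag of cones'' only requires the dimension increments and inclusions without the face condition, I would instead note that $\calc_{i-1}$ having dimension exactly one less than $\calc_i$ and being contained in $\calc_i$ still makes it lie in the relative boundary of $\calc_i$, and then replace the transversality argument by: $\calc_i\sdrop\calc_{i-1}$ is contained in the union of $\relint\calc_i$ with the relative interiors of those facets of $\calc_i$ other than (and not containing) $\calc_{i-1}$ — but this is more delicate, so I would first confirm the face condition is intended (it is, given the matrix construction $\calc_i=\operatorname{cone}(w_0,\dots,w_i)$ where $\calc_{i-1}=\operatorname{cone}(w_0,\dots,w_{i-1})$ is visibly a face). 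Finally, transporting to polyhedra: a $\Gamma$-rational polyhedron $\calQ$ is a neighborhood of $\calP_\bullet$ iff $c(\calQ)$ meets $c(\calP_i)\sdrop c(\calP_{i-1})$ for all $i$; one checks $c(\calP_i)\sdrop c(\calP_{i-1})$ restricted to $\{1\}\times N_{\R}$ is $\calP_i\sdrop\calP_{i-1}$ for $i\ge1$, and for $i=0$ the condition ``$c(\calQ)$ meets $c(\calP_0)\sdrop\{0\}$'' says precisely $\calQ\ni\calP_0$ (as $\calP_0$ is a point), giving exactly the stated polyhedral criterion.
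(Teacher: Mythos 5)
Your proof is correct and follows essentially the same strategy as the paper's: both directions hinge on the characterization of $\relint\calc_i$ as the set of positive combinations of $i+1$ linearly independent points of $\calc_i$, and both prove the converse by building such a point inside $\calD$ from one point chosen in each set $\calc_j\sdrop\calc_{j-1}$. The only cosmetic difference is that you collect all of $z_0,\dots,z_k$ at once and sum them, whereas the paper runs a one-step induction (adding a single point from $\calc_k\sdrop\calc_{k-1}$ to a point already in $\cald\cap\relint\calc_{k-1}$); your explicit flag-of-\emph{faces} caveat is a correct reading of the intended definition, which the paper uses openly in the proof of Proposition~\ref{prop:EveryFlagLocEquivToSimplicial}.
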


\begin{proof}

One direction follows from the fact that $\relint\calc_i\subseteq\calc_i\sdrop\calc_{i-1}$. 
For the other direction, we will proceed by induction on $k$. The case $k=0$ is trivial because $\relint \calc_0=\calc_0\sdrop\calc_{-1}$. Assume the statement holds for $k-1$. By the inductive hypothesis, $\cald$ meets the relative interior of $\calc_{i}$ for $1\leq i<k$, so we only need to show that $\cald$ meets the relative interior of $\calc_k$. Pick a point $x \in \cald \cap \relint \calc_{k-1}$. So $x$ is a positive linear combination of $k$ linearly independent points in $\calc_{k-1}$. Because $\cald$ meets $\calc_k\sdrop\calc_{k-1}$, we can pick a point $y \in \cald \cap \calc_k\sdrop\calc_{k-1}$. Then $x+y$ is a positive linear combination of $k+1$ linearly independent points in $\calc_k$, so $x+y\in\relint\calc_k$. Since $x,y\in\cald$ and $\cald$ is a cone, we get $x+y\in\cald\cap\relint\calc_k$.

The claim about polyhedra follows by considering $\calc_\bullet=c(\calp_\bullet)$ and $\calD=c(\calq)$ and noting that, because $\calc_0$ is a ray, $\calc_0\sdrop\calc_{-1}=\calc_0\sdrop\{0\}$ meets the cone $\calD$ if and only if $\calc_0\sdrop\{0\}$ is contained in $\cald$.
\end{proof}

We now work towards showing that every flag of cones is locally equivalent to one that is simplicial. To do this, we will need a technical lemma.

\newcommand{\coneSigma}{\calc}
\newcommand{\coneTau}{\calc'}
\newcommand{\coneF}{\calb}
\newcommand{\coneG}{\calb'}
\newcommand{\conesigma}{\coneSigma}
\newcommand{\conetau}{\coneTau}
\newcommand{\conef}{\coneF}
\newcommand{\coneg}{\coneG}
\begin{lemma}\label{lemma:InductiveStepInSimplicialReduction}
Let $\coneSigma$ be a cone and let $\coneF$ be a facet of $\coneSigma$. Let $\coneG$ be a subcone of $\conef$ with such that $\dim \coneg=\dim\conef$ and let $\conetau$ be a subcone of $\conesigma$ with $\dim\conetau=\dim\conesigma$ and $\coneg$ a face of $\conetau$. Suppose that $v\in\relint\coneg$ and $w\in\relint\conesigma$. Then there is a real number $\epsilon>0$ such that $v+\epsilon w\in\conetau$.
\end{lemma}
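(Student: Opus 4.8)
The plan is to work locally near the point $v \in \relint \coneg$ and show that moving a tiny bit in the direction $w$ keeps us inside $\conetau$. Since $\coneg$ is a face of $\conetau$ with $\coneg \subseteq \conef = $ a facet of $\conesigma$, and $\dim \coneg = \dim \conef = \dim\conesigma - 1$, I would first set up coordinates: choose a linear functional $\ell$ on the ambient space such that $\conesigma \subseteq \{\ell \geq 0\}$ and $\conef = \conesigma \cap \{\ell = 0\}$ (this is the defining functional of the facet $\conef$). Since $w \in \relint\conesigma$ and $\conesigma$ is not contained in the hyperplane $\{\ell = 0\}$, we have $\ell(w) > 0$; meanwhile $\ell(v) = 0$ because $v \in \coneg \subseteq \conef$. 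Thus for any $\epsilon > 0$, the point $v + \epsilon w$ satisfies $\ell(v + \epsilon w) = \epsilon\,\ell(w) > 0$, so it lies strictly on the interior side of the facet.

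Next I would exploit the face structure of $\conetau$. Let $F_1, \dots, F_m$ be the facets of $\conetau$ with defining functionals $\ell_1, \dots, \ell_m$ (so $\conetau = \bigcap_j \{\ell_j \geq 0\}$, using that $\dim\conetau = \dim\conesigma$ so $\conetau$ is full-dimensional in the relevant subspace — if the ambient space is larger I restrict to $\operatorname{span}\conesigma$ first, noting $v, w$ both lie in $\operatorname{span}\conesigma$). Because $\coneg$ is a face of $\conetau$ and $v \in \relint\coneg$, the functionals $\ell_j$ split into two groups: those with $\ell_j|_{\coneg} \equiv 0$ (equivalently $\ell_j(v) = 0$), and those with $\ell_j(v) > 0$. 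For the second group, $\ell_j(v + \epsilon w) > 0$ for all sufficiently small $\epsilon > 0$ by continuity. The crux is therefore the first group: I must show that if $\ell_j(v) = 0$ then $\ell_j(v + \epsilon w) = \epsilon\,\ell_j(w) \geq 0$, i.e., $\ell_j(w) \geq 0$, for each such $j$.

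The key step — and the main obstacle — is this last claim: each facet functional $\ell_j$ of $\conetau$ that vanishes on $\coneg$ is nonnegative on $w \in \relint\conesigma$. Here is the argument I would give. Such a facet $F_j$ of $\conetau$ contains the face $\coneg$, and $\coneg$ spans a hyperplane (codimension $1$) inside $\operatorname{span}\conesigma$. A facet $F_j$ containing $\coneg$ either spans the same hyperplane as $\coneg$ — in which case $F_j$ is itself contained in that hyperplane and $\ell_j$ is, up to positive scalar, $\pm\ell$; since $\conetau \subseteq \conesigma \subseteq \{\ell \geq 0\}$ we must have $\ell_j = $ (positive scalar)$\cdot \ell$, hence $\ell_j(w) > 0$ — or $F_j$ properly contains $\coneg$ and hence $F_j$ is full-dimensional in $\operatorname{span}\conesigma$, which is impossible for a facet. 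Wait: a cleaner route avoiding this case analysis is to observe that any facet functional $\ell_j$ of $\conetau$ vanishing on $\coneg$ must vanish on the whole hyperplane $H = \operatorname{span}\coneg$ (since $\ell_j$ is linear and $\coneg$, being $(\dim\conesigma-1)$-dimensional, spans $H$), so $\ker \ell_j = H$. But $H = \operatorname{span}\conef = \ker\ell$ as well. Two linear functionals on $\operatorname{span}\conesigma$ with the same kernel $H$ are scalar multiples; since both $\ell_j$ and $\ell$ are nonnegative on the full-dimensional cone $\conesigma$ (for $\ell$ by choice, for $\ell_j$ because $\conesigma \supseteq \conetau$ forces... no — I need $\ell_j \geq 0$ on $\conesigma$, which is NOT automatic).

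So the genuine subtlety is that $\ell_j \geq 0$ holds a priori only on $\conetau$, not on $\conesigma$. I would resolve it as follows: $\ell_j$ and $\ell$ are proportional, $\ell_j = c\,\ell$ for some nonzero $c \in \R$. If $c > 0$ then $\ell_j(w) = c\,\ell(w) > 0$, done. If $c < 0$, then $\ell_j = c\,\ell \leq 0$ on all of $\conesigma \supseteq \conetau$; but $\ell_j \geq 0$ on $\conetau$, forcing $\ell_j \equiv 0$ on $\conetau$, contradicting that $F_j = \{\ell_j = 0\}\cap\conetau$ is a facet (proper face) of the full-dimensional cone $\conetau$. Hence $c > 0$ and we are done. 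Finally, collecting the finitely many constraints, choose $\epsilon > 0$ small enough to satisfy all the strict inequalities from the facets not containing $\coneg$ (this is possible since each $\ell_j(v)>0$ there and $\ell_j(v+\epsilon w)\to\ell_j(v)$ as $\epsilon \to 0$); then $v + \epsilon w$ satisfies $\ell_j(v+\epsilon w)\geq 0$ for every facet of $\conetau$, so $v + \epsilon w \in \conetau$, as desired.
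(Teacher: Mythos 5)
Your proof is correct. It takes essentially the same route as the paper, which chooses a supporting functional $u$ for the facet $\conef$ of $\conesigma$ and then observes that, because $\coneg$ is a codimension-one face of $\conetau$ spanning $u^\perp\cap\operatorname{span}\conesigma$, the tangent cone $\R_{\geq0}(\conetau-v)$ is exactly the half-space $u^\vee\cap\operatorname{span}\conesigma$, so $w$ can be written as $r(z-v)$ with $z\in\conetau$. Your argument unpacks that one-line tangent-cone identification into an explicit analysis of the facet functionals of $\conetau$ (showing the only binding ones at $v$ are positive multiples of $\ell$); this is more detailed but gets to the same place, and the case-analysis handling the sign of the proportionality constant is a correct replacement for the paper's implicit use of $\conetau\subseteq\conesigma$.
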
\begin{proof}
Since $\conef$ is a face of $\conesigma$, there is some $u\in\Mon_{\R}$ such that  such that $\conesigma\subseteq u^\vee$ and $\conef=\conesigma\cap u^\perp$. In particular, $v\in u^\perp$ and $w\in u^\vee$. Since $\coneg=\conetau\cap u^\perp$ is a facet of $\conetau$ and $v\in\relint\coneg$, the star of $\conetau$ at $v$ is $u^\vee$. That is, $u^\vee=\R_{\geq0}(\conetau-v)$. Since $w\in u^\vee$ is nonzero, this means that we can write $w=r(z-v)$ for some $r>0$ and $z\in\conetau$. So, letting $\epsilon=\frac{1}{r}$, we have $v+\epsilon w=v+(z-v)=z\in\conetau$.
\end{proof}

This lemma will help us with an inductive argument. The following definition will also facilitate this argument.

\begin{definition}
Let $\calc_\bullet = (\calc_0 \leq \calc_1 \leq \cdots \leq \calc_k)$ be a flag of cones. 
For any $j\leq k$, the \textit{truncation} of $\calc_\bullet$ at $j$ is $\calc_\bullet^{(j)} = (\calc_0 \leq \calc_1 \leq \cdots \leq \calc_j)$. 
\end{definition}

\begin{prop}\label{prop:EveryFlagLocEquivToSimplicial}
Any flag $\calc_\bullet$ of cones is locally equivalent to a simplicial flag $\calc_\bullet'$ of cones. Any flag $\calp_\bullet$ of polyhedra is locally equivalent to a flag $\calp_\bullet'$ of cones such that $c(\calp_\bullet')$ is simplicial.
\end{prop}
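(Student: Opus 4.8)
The plan is to induct on the length $k$ of the flag, making a simplicial cone one dimension at a time from the bottom up while preserving local equivalence. The inductive step is where Lemma~\ref{lemma:InductiveStepInSimplicialReduction} does the work, and the point is that ``simplicial'' and ``locally equivalent'' are exactly the combination that can be arranged compatibly with truncation: if $\calc_\bullet^{(k-1)}$ has already been replaced by a locally equivalent flag $\calc_\bullet'^{(k-1)}$ whose top cone $\calc_{k-1}'$ is simplicial, then I only need to produce a simplicial cone $\calc_k'\supseteq\calc_{k-1}'$ with $\dim\calc_k' = k+1$ such that the full flag $(\calc_0'\leq\cdots\leq\calc_{k-1}'\leq\calc_k')$ is locally equivalent to $\calc_\bullet$. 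By the neighborhood lemma just proved, local equivalence of a flag of cones is detected by which $\Gamma$-admissible cones meet each $\calc_i\sdrop\calc_{i-1}$; so once the truncation is fixed, I only need $\calc_k'$ to satisfy: (i) $\calc_{k-1}'$ is a facet of $\calc_k'$ (so that $\calc_k'\sdrop\calc_{k-1}'$ behaves correctly relative to the lower strata, in particular a $\Gamma$-admissible cone meeting $\relint\calc_{k-1}'$ and $\calc_k'\sdrop\calc_{k-1}'$ meets $\relint\calc_k'$), and (ii) a $\Gamma$-admissible cone meets $\calc_k'\sdrop\calc_{k-1}'$ if and only if it meets $\calc_k\sdrop\calc_{k-1}$, i.e.\ that $\calc_k'$ and $\calc_k$ have the same ``$\Gamma$-admissible directions'' beyond the facet.

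To construct $\calc_k'$: pick a point $v\in\relint\calc_{k-1}'$, pick a point $w\in\relint\calc_k$, and set $\calc_k'$ to be the cone generated by $\calc_{k-1}'$ together with the ray through $w$ — concretely $\calc_k' = \calc_{k-1}' + \R_{\geq0}w$, or rather the cone on $\calc_{k-1}'$ with apex direction $w$ so that $\calc_{k-1}'$ is a facet. Since $\calc_{k-1}'$ is simplicial of dimension $k$ and $w$ lies outside its span (it is in the relative interior of the $(k+1)$-dimensional $\calc_k$, hence not in the hyperplane spanned by $\calc_{k-1}$, and by local equivalence of the truncations that hyperplane is the span of $\calc_{k-1}'$), the cone $\calc_k'$ is simplicial of dimension $k+1$ with $\calc_{k-1}'$ as a facet. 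It remains to check the neighborhood condition (ii). For a $\Gamma$-admissible cone $\cald$: if $\cald$ is a neighborhood of $\calc_\bullet$, then (using the inductive hypothesis for the truncation) $\cald$ meets $\relint\calc_{k-1}'$, say at a point we may take to be $v$, and $\cald$ meets $\relint\calc_k$, say at $w$; then Lemma~\ref{lemma:InductiveStepInSimplicialReduction} applied with $\conesigma = \calc_k$, $\conef = $ a facet of $\calc_k$ containing $v$ in its relative interior inside $\calc_{k-1}$... — this is the delicate bookkeeping point, see below — gives $v+\eps w\in\calc_k'$ for small $\eps$, and since $v+\eps w\notin\calc_{k-1}'$ (as $w$ is outside its span), $\cald$ meets $\calc_k'\sdrop\calc_{k-1}'$; the reverse implication is symmetric, swapping the roles of $\calc_k$ and $\calc_k'$ and noting each contains $\calc_{k-1}'$ as a facet.

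\textbf{Main obstacle.} The genuine subtlety is matching up the relative interiors across the ``change of truncation'': when I replace $\calc_\bullet^{(k-1)}$ by $\calc_\bullet'^{(k-1)}$, the new top facet $\calc_{k-1}'$ need not sit inside the old $\calc_{k-1}$, so I must check that $\calc_{k-1}'$ and $\calc_{k-1}$ span the same hyperplane (otherwise ``$\calc_{k-1}'$ is a facet of $\calc_k'$'' and the Lemma's hypotheses are not simultaneously available). This should follow because locally equivalent flags of cones have, stratum by stratum, the same affine span — a $\Gamma$-admissible cone can be chosen inside any prescribed linear subspace containing a rational point, and $\Gamma$-admissible neighborhoods must meet $\relint\calc_i$, forcing $\mathrm{span}\,\calc_i = \mathrm{span}\,\calc_i'$; I would isolate this as a small preliminary observation. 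Once spans agree, applying Lemma~\ref{lemma:InductiveStepInSimplicialReduction} twice (once in each direction) with $\coneg = \calc_{k-1}'$, $\conef$ a facet of $\calc_k$ resp.\ $\calc_k'$ through $v\in\relint\calc_{k-1}'$, and $\conetau = \calc_k'$ resp.\ $\calc_k$, closes the induction. The polyhedral statement then follows immediately by passing to cones: given a flag $\calp_\bullet$ of polyhedra, apply the cone statement to $c(\calp_\bullet)$, obtaining a locally equivalent simplicial flag of cones $\calc_\bullet'$; since $c(\calp_0)$ is a ray not contained in $\{0\}\times N_\R$ and local equivalence preserves $\relint\calc_0$, the first cone $\calc_0'$ is also a ray not in $\{0\}\times N_\R$, so $\calc_\bullet' = c(\calp_\bullet')$ for a flag of polyhedra $\calp_\bullet'$, and by the remark after Definition~\ref{def: local-equiv} local equivalence of $\calp_\bullet$ and $\calp_\bullet'$ is equivalent to that of their cones.
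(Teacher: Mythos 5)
Your plan has a real gap at exactly the spot you flag as ``the delicate bookkeeping point.'' You build the $\calc_i'$ one dimension at a time, only requiring at each stage that the truncation $\calc_\bullet'^{(k-1)}$ be simplicial and locally equivalent to $\calc_\bullet^{(k-1)}$. Nothing in that inductive hypothesis makes $\calc_{k-1}'$ a \emph{subcone} of $\calc_{k-1}$, yet Lemma~\ref{lemma:InductiveStepInSimplicialReduction} hypothesizes subcone containments, not span agreement: it needs $\coneg\subseteq\conef$ (so $\calc_{k-1}'\subseteq\calc_{k-1}$) and $\conetau\subseteq\conesigma$ (so $\calc_k'\subseteq\calc_k$). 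The preliminary observation you propose to isolate — that locally equivalent flags have the same spans stratum by stratum — even if true, would not deliver these containments, so the lemma cannot be invoked. Your ``apply the lemma twice, once in each direction'' also cannot work as written, because the lemma is asymmetric ($\conetau$ must sit inside $\conesigma$); in the reverse direction you would need $\calc_k\subseteq\calc_k'$, which is the wrong way around.

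The fix is to \emph{build in} the containment $\calc_i'\subseteq\calc_i$, and the paper does this in one stroke: choose, for every $i$, a ray $\rho_i$ of $\calc_i$ not in $\calc_{i-1}$, and set $\calc_i'=\sum_{j\le i}\rho_j$. This is a simplicial subcone of $\calc_i$ of the same dimension with $\calc_i'\sdrop\calc_{i-1}'\subseteq\calc_i\sdrop\calc_{i-1}$, so one inclusion of neighborhood sets is immediate, and for the other inclusion the lemma applies directly at each inductive step with $\conesigma=\calc_k$, $\conef=\calc_{k-1}$, $\coneg=\calc_{k-1}'$, $\conetau=\calc_k'$ — all required containments hold by construction. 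If you insist on a stage-by-stage construction, you must add $\calc_i'\subseteq\calc_i$ to the inductive invariant (at which point you are essentially choosing one new ray of $\calc_k$ at step $k$, i.e.\ reproducing the paper's construction in inductive form). Your final reduction of the polyhedral statement to the cone statement is correct and matches the paper.
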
\begin{proof}
Consider any flag of cones $\calc_\bullet=(\calc_0\leq\cdots\leq\calc_k)$ and, for each $0\leq i\leq k$ pick a ray $\rho_i$ of $\calc_i$ not contained in $\calc_{i-1}$. Define a simplicial flag $\calc_\bullet'$ of cones by setting $\calc_i'=\dsum_{j=0}^i\rho_j$. Note that $\calc_i'$ is a subcone of $\calc_i$ with $\dim\calc_i'=i+1=\dim\calc_i$ and $\calc_i'\sdrop\calc_{i-1}'\subseteq\calc_i\sdrop\calc_{i-1}$. In particular, any $\Gamma$-admissible neighborhood of $\calc_\bullet'$ is a $\Gamma$-admissible neighborhood of $\calc_\bullet$.

We now prove that any $\Gamma$-admissible neighborhood of $\calc_\bullet$ is a $\Gamma$-admissible neighborhood of $\calc_\bullet'$ by induction on $k$. For the base of $k=0$, note that $\calc_0'=\calc_0$, so we are done. Now assume that $k\geq1$ and the result is true for $k-1$. Suppose that $\cald$ is a $\Gamma$-admissible neighborhood of $\calc_\bullet$. Then $\cald$ is a neighborhood of $\calc_\bullet^{(k-1)}$ so, by the inductive hypothesis, $\cald$ is a neighborhood of $\calc_\bullet'^{(k-1)}$. Thus $\cald$ meets the relative interior of $\calc_i'$ for $i<k$, and it suffices to show that $\cald$ meets $\calc_k'\sdrop\calc_{k-1}'$. We can pick $v\in(\relint\calc_{k-1}')\cap\cald$ and $w\in(\relint\calc_k)\cap\cald$. By Lemma~\ref{lemma:InductiveStepInSimplicialReduction}, there is an $\epsilon>0$ such that $v+\epsilon w\in\calc_k'$. Since  $\epsilon w\in\relint\calc_k$ and $\calc_{k-1}$ is a proper face of $\calc_k$, $v\in\calc_{k-1}$ gives us that $v+\epsilon w\notin\calc_{k-1}\supseteq\calc_{k-1}'$. Thus, $v+\epsilon w\in\calc_k'\sdrop\calc_{k-1}'$. Since $v,w\in\cald$ and $\cald$ is a cone, $v+\epsilon w\in\cald$.

Given any flag $\calp_\bullet$ of polyhedra, consider the corresponding flag $\calc_\bullet=c(\calp_\bullet)$ of cones and let $\calc_\bullet'$ be as above. Since $\calc_0'=\calc_0$ is not contained in $\{0\}\times N_{\R}$, there is a flag $\calp_\bullet'$ of polyhedra such that $\calc_\bullet'=c(\calp_\bullet')$. Since $c(\calp_\bullet)$ and $c(\calp_\bullet')$ are locally equivalent, so are $\calp_\bullet$ and $\calp_\bullet'$.
\end{proof}

\subsection{The filter of a prime congruence.}\label{subsec:FilterOfAPrimeCong}

Now we introduce the next key player in this paper. For 
finitely many Laurent polynomials
$f_0, \ldots, f_n \in \base[\Mon]$ we define the \textit{rational set} $R(f_0, \ldots, f_n )$ to be
$$R(f_0, \ldots, f_n ) = \{ x \in N_\R\,:\, f_0(x) \geq f_i(x),\text{ for } 1 \leq i \leq n \}.$$
Here, when $f=\sum_{u\in\Mon}f_u\chi^u\in\base[\Mon]$ and $x\in N_{\R}$, we let $f(x)=\max_{u\in\Mon}\left(\log(f_u)+_{\R}\angbra{x,u}\right)$.

For any $f\in\base[\Mon]$, we can form the homogenized function $\widetilde{f}$ given by 
$$\widetilde{f}(r,x)=\max_{u\in\Mon}\left(r\log(f_u)+_{\R}\angbra{x,u}\right)$$
for any $(r,x)\in\R_{\geq0}\times N_{\R}$. We let 
$$\widetilde{R}(f_0,f_1,\ldots,f_n)=\left\{ w\in\R_{\geq0}\times N_{\R}\,:\,\wt{f}_0( w)\geq\wt{f}_i( w)\text{ for }1\leq i\leq n\right\}.$$ 
Note that $R(f_0,f_1,\ldots,f_n)$ is the restriction of $\wt{R}(f_0,f_1,\ldots,f_n)$ to $\{1\}\times N_{\R}$.

\begin{lemma}\label{lemma:BasicPropertiesOfRatSets}

Say $f_0,\ldots,f_n\in\base[\Mon]$ and write $f_i=\sum_{u\in\Mon}f_{i,u}\chi^u$. Then
\begin{enumerate}
\item\label{item:RationalSetAsIntersectionOfBinaryRatSets} $\wt{R}(f_0,f_1, \ldots, f_n ) = \bigcap_{i=1}^n \wt{R}(f_0, f_i)$,
\item\label{item:RationalSetsAndTerms} $\wt{R}(f_0, f_i) = \bigcap_{u\in M} \wt{R}(f_0, f_{i, u} \chi^u) = \bigcup_{u\in M} \wt{R}(f_{0, u} \chi^u, f_i)$, and
\item\label{item:RationalSetPickingOneTerm} for any $\mu\in\Mon$, $ \wt{R}(f_0,f_1, \ldots, f_n ) \supseteq \wt{R}(f_{0, \mu} \chi^\mu, f_1, \ldots, f_n )$.
\end{enumerate}
The same applies if we replace $\wt{R}$ with $R$.
\end{lemma}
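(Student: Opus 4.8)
The plan is to deduce all three parts by unwinding the definitions, after isolating the one fact that does the real work: homogenization turns the ``max over terms'' defining a polynomial into a pointwise maximum of homogenized terms. Precisely, for $f=\sum_{u\in\Mon}f_u\chi^u$ and $w=(r,x)\in\R_{\geq0}\times N_{\R}$ we have $\wt f(w)=\max_u\bigl(r\log(f_u)+\angbra{x,u}\bigr)=\max_u\wt{(f_u\chi^u)}(w)$, the maximum running over the finite set $\supp(f)$, with the convention that the empty maximum --- that is, $\wt 0$ --- is the constant function $-\infty$. I would record this identity first, together with the observation that a ``term'' $f_{i,u}\chi^u$ with $u\notin\supp(f_i)$ is just $0$, for which $\wt R(g,0)=\R_{\geq0}\times N_{\R}$ and $\wt R(0,g)=\emptyset$ whenever $g\neq0$; this makes the fact that the second part indexes over all of $\Mon$, rather than over supports, harmless.

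Granting this, the three parts are short. For the first, $w\in\wt R(f_0,f_1,\ldots,f_n)$ iff $\wt f_0(w)\geq\wt f_i(w)$ for every $i$, iff $w$ lies in each $\wt R(f_0,f_i)$. For the first equality of the second part, applying the identity to $f_i$ shows that $\wt f_0(w)\geq\max_u\wt{(f_{i,u}\chi^u)}(w)$ holds exactly when $\wt f_0(w)\geq\wt{(f_{i,u}\chi^u)}(w)$ for \emph{every} $u$, since a finite maximum is $\leq$ a value iff every term is. For the second equality, applying the identity to $f_0$ shows that $\max_u\wt{(f_{0,u}\chi^u)}(w)\geq\wt f_i(w)$ holds exactly when $\wt{(f_{0,u}\chi^u)}(w)\geq\wt f_i(w)$ for \emph{some} $u$, since a finite maximum is $\geq$ a value iff some term is. For the third part, if $w\in\wt R(f_{0,\mu}\chi^\mu,f_1,\ldots,f_n)$ then $\wt{(f_{0,\mu}\chi^\mu)}(w)\geq\wt f_i(w)$ for each $i$, and since $\wt f_0(w)=\max_u\wt{(f_{0,u}\chi^u)}(w)\geq\wt{(f_{0,\mu}\chi^\mu)}(w)$ we conclude $w\in\wt R(f_0,f_1,\ldots,f_n)$.

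For the final sentence of the lemma, I would note that $R(f_0,\ldots,f_n)$ is the preimage of $\wt R(f_0,\ldots,f_n)$ along the inclusion $N_{\R}\hookrightarrow\R_{\geq0}\times N_{\R}$, $x\mapsto(1,x)$, and that taking preimages commutes with finite unions and intersections, so the $R$-versions follow at once from the $\wt R$-versions; alternatively one runs the arguments above with $r$ held equal to $1$. I do not anticipate any real obstacle here: the entire content is the elementary behavior of finite maxima, and the only point worth pinning down in advance is the zero-polynomial convention, so that the membership equivalences above hold without a case distinction.
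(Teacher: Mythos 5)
Your proof is correct and follows the paper's own reasoning: part (1) is definitional, parts (2) and (3) come from the fact that $\wt f(w)$ is the finite maximum of $\wt{(f_u\chi^u)}(w)$ over the terms of $f$, and the $R$-versions follow by restricting to $\{1\}\times N_{\R}$. You simply write out the elementary max-inequalities and the zero-polynomial conventions that the paper leaves implicit.
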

\begin{proof}
(\ref{item:RationalSetAsIntersectionOfBinaryRatSets}) follows from the definition. (\ref{item:RationalSetsAndTerms}) and (\ref{item:RationalSetPickingOneTerm}) follow from the fact that, in the evaluation of 
$\wt{f}( w)$,
we take the maximum over the terms of $f$. The final claim follows by restriction to $\{1\}\times N_{\R}$.
\end{proof}

\begin{defi}

For any prime congruence $P$ on $\base[\Mon]$, the \emph{prime filter $\wt{\calf}_P$ that $P$ defines on the \ConoidLarry}\ is the collection of $\Gamma$-admissible \conoidSets\ $\wt{U}$ for which there are $f_0,f_1, \ldots, f_n \in \base[\Mon]$ such that $\wt{R}(f_0,f_1, \ldots, f_n ) \subseteq \wt{U}$ and 
$|f_0|_P \geq |f_i|_P$ for $1\leq i\leq n$. 
If $P \in \ContBase{\base}{\base[\Mon]}$, then the \emph{prime filter $\calf_P$ that $P$ defines on the \BigLarry}\ is the collection of $\Gamma$-rational polyhedral sets $U$ in $N_\R$ for which there are  $f_0,f_1, \ldots, f_n \in \base[\Mon]$ such that $R(f_0,f_1,\ldots,f_n)\subseteq U$ and $|f_0|_P \geq |f_i|_P$ for $1\leq i\leq n$. In this case, a $\Gamma$-rational polyhedral set $U$ is in $\calf_P$ if and only if $c(U)$ is in $\wt{\calf}_P$. 
\end{defi}

We will justify these names later in this section.
We begin our study of $\wt{\calf}_P$ and $\calF_P$ by observing that, in the above definitions, it is enough to consider monomials.

\begin{lemma}

Let $P$ be a prime congruence on $\base[\Mon]$ and $\wt{U}$ a $\Gamma$-admissible \conoidSet. Then $\wt{U}\in\wt{\calF}_P$ if and only if there are terms $a_1\chi^{u_1},\ldots,a_n\chi^{u_n}\in \base[\Mon]$ such that $\wt{R}(1_\base,a_1\chi^{u_1},\ldots,a_n\chi^{u_n})\subseteq \wt{U}$ and $1_{\kappa(P)}\geq|a_i\chi^{u_i}|_P$ for $1\leq i\leq n$.

If $P\in\ContBase{\base}\base[\Mon]$ and $U$ is a $\Gamma$-rational polyhedral set, then $U\in\calF_P$ if and only if there are terms $a_1\chi^{u_1},\ldots,a_n\chi^{u_n}\in \base[\Mon]$ such that $R(1_\base,a_1\chi^{u_1},\ldots,a_n\chi^{u_n})\subseteq U$ and $1_{\kappa(P)}\geq|a_i\chi^{u_i}|_P$ for $1\leq i\leq n$.
\end{lemma}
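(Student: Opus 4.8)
The plan is to reduce the general rational-set membership condition to one involving only monomials, in two stages: first breaking each $f_i$ into its constituent terms, and then normalizing by a single term of $f_0$. The two directions of the ``if and only if'' are not symmetric, so I would handle them separately, but both rely on Lemma~\ref{lemma:BasicPropertiesOfRatSets} together with the fact that $P$ (equivalently $\leq_P$) is a multiplicative total preorder respecting the order on $\base$ (Proposition~\ref{prop: side_claim}).

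For the ``if'' direction (monomial data produces an element of the filter), the easy case: if terms $a_1\chi^{u_1},\ldots,a_n\chi^{u_n}$ satisfy $\wt R(1_\base,a_1\chi^{u_1},\ldots,a_n\chi^{u_n})\subseteq\wt U$ and $1_{\kappa(P)}\ge|a_i\chi^{u_i}|_P$, then taking $f_0=1_\base$ and $f_i=a_i\chi^{u_i}$ directly witnesses $\wt U\in\wt\calf_P$ from the definition, since $|f_0|_P=1_{\kappa(P)}\ge|f_i|_P$. So this direction is immediate.

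For the ``only if'' direction, suppose $\wt U\in\wt\calf_P$, witnessed by $f_0,f_1,\ldots,f_n$ with $\wt R(f_0,\ldots,f_n)\subseteq\wt U$ and $|f_0|_P\ge|f_i|_P$. Write $f_0=\sum_u f_{0,u}\chi^u$. The key point is to pick a term $f_{0,\mu}\chi^\mu$ of $f_0$ that is ``dominant mod $P$,'' i.e.\ with $|f_{0,\mu}\chi^\mu|_P=|f_0|_P$; such a $\mu$ exists because $\leq_P$ is a total preorder on the finitely many terms of $f_0$ and $|f_0|_P$ is their max. By Lemma~\ref{lemma:BasicPropertiesOfRatSets}(\ref{item:RationalSetPickingOneTerm}), $\wt R(f_{0,\mu}\chi^\mu,f_1,\ldots,f_n)\subseteq\wt R(f_0,\ldots,f_n)\subseteq\wt U$, and for each $i$ we still have $|f_{0,\mu}\chi^\mu|_P=|f_0|_P\ge|f_i|_P$. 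Next, for each $i\ge1$, write $f_i=\sum_u f_{i,u}\chi^u$; then $|f_i|_P=\max_u|f_{i,u}\chi^u|_P$, so $|f_{0,\mu}\chi^\mu|_P\ge|f_{i,u}\chi^u|_P$ for every term of every $f_i$. Using Lemma~\ref{lemma:BasicPropertiesOfRatSets}(\ref{item:RationalSetsAndTerms}) (the intersection form) to replace each $\wt R(f_{0,\mu}\chi^\mu,f_i)$ by the intersection over $u$ of $\wt R(f_{0,\mu}\chi^\mu,f_{i,u}\chi^u)$, and Lemma~\ref{lemma:BasicPropertiesOfRatSets}(\ref{item:RationalSetAsIntersectionOfBinaryRatSets}) to reassemble, I get a finite list of binary rational sets $\wt R(f_{0,\mu}\chi^\mu,a\chi^u)$, one for each term $a\chi^u=f_{i,u}\chi^u$ appearing in some $f_i$, whose intersection is contained in $\wt U$ and each of whose ``numerator'' terms dominates the ``denominator'' term mod $P$. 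Finally, to get $1_\base$ in the first slot, I would multiply through by the term $(f_{0,\mu})^{-1}\chi^{-\mu}\in\base[\Mon]$ (here I use that $\base$ is a semifield and $\Mon=\Z^n$ is a group, so this term is invertible): by multiplicativity of $\wt f\mapsto$ its graph, $\wt R(f_{0,\mu}\chi^\mu,a\chi^u)=\wt R(1_\base,(f_{0,\mu})^{-1}a\,\chi^{u-\mu})$, and by multiplicativity of $\leq_P$ the domination $|f_{0,\mu}\chi^\mu|_P\ge|a\chi^u|_P$ becomes $1_{\kappa(P)}\ge|(f_{0,\mu})^{-1}a\,\chi^{u-\mu}|_P$. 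Relabeling these finitely many terms as $a_1\chi^{u_1},\ldots,a_m\chi^{u_m}$ gives exactly the claimed monomial witnesses, and the same argument with $\wt R$ replaced by $R$ and restriction to $\{1\}\times N_\R$ handles the statement for $\calf_P$ when $P\in\ContBase{\base}\base[\Mon]$ (with $\Gamma$-rational polyhedral sets in place of $\Gamma$-admissible fan support sets).

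The main obstacle I anticipate is purely bookkeeping: correctly tracking that the multiplication step $\chi^{-\mu}$ is legitimate (it requires $\Mon$ to be a group, which is the standing assumption here since $\Mon\cong\Z^n$) and that after all the intersections/unions in Lemma~\ref{lemma:BasicPropertiesOfRatSets}(\ref{item:RationalSetsAndTerms}) the containment in $\wt U$ is preserved — the union form of that lemma would go the wrong way, so I must be careful to use only the intersection form when shrinking toward $\wt U$. There is no deep difficulty; the content is entirely in the already-established Lemma~\ref{lemma:BasicPropertiesOfRatSets} and in the order-theoretic properties of $\leq_P$.
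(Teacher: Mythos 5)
Your proof is correct and follows essentially the same route as the paper's: pick a term of $f_0$ that realizes $|f_0|_P$, use Lemma~\ref{lemma:BasicPropertiesOfRatSets}(\ref{item:RationalSetPickingOneTerm}) to shrink the rational set, normalize by the inverse of that term (legitimate in the torus case the section reduces to), and split each $f_i$ into its constituent monomials via Lemma~\ref{lemma:BasicPropertiesOfRatSets}(\ref{item:RationalSetAsIntersectionOfBinaryRatSets}) and (\ref{item:RationalSetsAndTerms}). The only cosmetic difference is that the paper normalizes $f_0$ to $1_\base$ before breaking the $f_i$ into terms whereas you do it afterward; the content is identical.
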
\begin{proof}

The ``if'' direction is clear. 

For the other direction, suppose $\wt{U}\in\wt{\calF}_P$, i.e., there are $f_0,f_1,\ldots,f_n\in \base[\Mon]$ such that $\wt{R}(f_0,f_1,\ldots,f_n)\subseteq \wt{U}$ and $|f_0|_P\geq|f_i|_P$ for $1\leq i\leq n$. 
Let $a_0\chi^{u_0}$ be a term of $f_0$ such that the maximum in $|f_0|_P=\dsum_{u\in M} |f_{0,u}\chi^{u}|_P$ is attained at $|a_0\chi^{u_0}|_P$. Then we have that $|a_0\chi^{u_0}|_P\geq |f_i|_P$ for $1\leq i\leq n$ and, by Lemma~\ref{lemma:BasicPropertiesOfRatSets}~ (\ref{item:RationalSetPickingOneTerm}), $\wt{R}(a_0\chi^{u_0},f_1,\ldots,f_n)\subseteq \wt{R}(f_0,f_1,\ldots,f_n)\subseteq \wt{U}$. Thus, we may assume without loss of generality that $f_0=a_0\chi^{u_0}$. Also, $a_0\chi^{u_0}$ is a unit in $\base[\Mon]$ and $|a_0\chi^{u_0}|_P\geq |f_i|_P$ gives us that $1_{\kappa(P)}\geq|(a_0\chi^{u_0})^{-1}f_i|_P$. Since also $\wt{R}(a_0\chi^{u_0},f_1,\ldots,f_n)=\wt{R}(1_\base,(a_0\chi^{u_0})^{-1}f_1,\ldots,(a_0\chi^{u_0})^{-1}f_n)$, we may assume without loss of generality that $f_0=1_\base$.

By Lemma~\ref{lemma:BasicPropertiesOfRatSets}~ (\ref{item:RationalSetAsIntersectionOfBinaryRatSets}) and (\ref{item:RationalSetsAndTerms}), $\wt{R}(1_\base,f_1,\ldots,f_n)=\dcap_{i=1}^n \wt{R}(1_\base,f_i)=\dcap_{i=1}^n\dcap_{u\in\Mon}\wt{R}(1_\base,f_{i,u}\chi^u)$. So, if we relabel those $f_{i,u}\chi^u$ with $f_{i,u}\neq0_\base$ as $a_1\chi^{u_1},\ldots,a_n\chi^{u_n}$, then we have $\wt{R}(1_\base,f_1,\ldots,f_n)=\dcap_{i=1}^n \wt{R}(1_\base,a_i\chi^{u_i})=\wt{R}(1_\base,a_1\chi^{u_1},\ldots,a_n\chi^{u_n})$.

The proof of the statement for $\calf_P$ is analogous.
\end{proof}

Having reduced to considering the sets $R(1_\base,a_1\chi^{u_1},\ldots,a_n\chi^{u_n})$ and $\wt{R}(1_\base,a_1\chi^{u_1},\ldots,a_n\chi^{u_n})$, we now consider the geometric nature of these sets.

\begin{lemma}\label{lemma:RatSetsAreGammaRat/Ad}

For any terms $a_1\chi^{u_1},\ldots,a_n\chi^{u_n}\in\base[\Mon]$, $\wt{R}(1_\base,a_1\chi^{u_1},\ldots,a_n\chi^{u_n})$ is a $\Gamma$-admissible cone. If $P\in\ContBase{\base}\base[\Mon]$ and $1_{\kappa(P)}\geq|a_i\chi^{u_i}|_P$ for $1\leq i\leq n$ then $R(1_\base,a_1\chi^{u_1},\ldots,a_n\chi^{u_n})$ is a $\Gamma$-rational polyhedron. 
\end{lemma}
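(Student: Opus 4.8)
The statement has two parts: first, that $\wt{R}(1_\base,a_1\chi^{u_1},\ldots,a_n\chi^{u_n})$ is always a $\Gamma$-admissible cone; second, that under the hypothesis $1_{\kappa(P)}\geq|a_i\chi^{u_i}|_P$ for a prime $P\in\ContBase{\base}\base[\Mon]$, the unhomogenized set $R(1_\base,a_1\chi^{u_1},\ldots,a_n\chi^{u_n})$ is a genuine ($\Gamma$-rational) polyhedron, i.e.\ nonempty and strongly convex. By Lemma~\ref{lemma:BasicPropertiesOfRatSets}\eqref{item:RationalSetAsIntersectionOfBinaryRatSets}, it suffices to analyze each binary set $\wt{R}(1_\base,a_i\chi^{u_i})$ and then intersect.

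\textbf{Part 1: $\Gamma$-admissibility.} Write $a_i=t^{\gamma_i}$ with $\gamma_i\in\Gamma$ (the case $a_i=0_\base$ is degenerate: $\wt R(1_\base,0)$ is all of $\R_{\geq0}\times N_\R$, still $\Gamma$-admissible, so I may assume $a_i\in\base^\times$). Unwinding the definition of $\wt f$, the function $\wt{(1_\base)}(r,x)$ is $r\cdot\log(1_\base)=0$, since $1_\base=t^0\chi^0$, while $\wt{(a_i\chi^{u_i})}(r,x)=r\gamma_i+\angbra{x,u_i}$. Hence
$$\wt{R}(1_\base,a_i\chi^{u_i})=\{(r,x)\in\R_{\geq0}\times N_\R : r\gamma_i+\angbra{x,u_i}\leq 0\},$$
which is exactly the defining form of a $\Gamma$-admissible cone (with a single inequality, $u_i\in\Mon$, $\gamma_i\in\Gamma$). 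Intersecting over $i=1,\ldots,n$ via Lemma~\ref{lemma:BasicPropertiesOfRatSets}\eqref{item:RationalSetAsIntersectionOfBinaryRatSets} gives $\wt{R}(1_\base,a_1\chi^{u_1},\ldots,a_n\chi^{u_n})=\{(r,x):r\gamma_i+\angbra{x,u_i}\leq 0,\ 1\leq i\leq n\}$, manifestly $\Gamma$-admissible. (Strong convexity of a $\Gamma$-admissible cone is already built into the terminology as used in this paper, but if needed it follows because the cone sits in the strongly convex $\R_{\geq0}\times N_\R$ — actually $\R_{\geq0}\times N_\R$ is not strongly convex, so I should instead note that $\Gamma$-admissible "cones" here are by definition of the form displayed, and no strong-convexity claim is being made for them beyond what was defined in the paragraph preceding Definition~\ref{def: gamma-rational-nbhd}.)

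\textbf{Part 2: $R(\cdots)$ is an honest polyhedron.} Restricting to $\{1\}\times N_\R$, we get $R(1_\base,a_1\chi^{u_1},\ldots,a_n\chi^{u_n})=\{x\in N_\R : \angbra{x,u_i}\leq-\gamma_i,\ 1\leq i\leq n\}$, which is $\Gamma$-rational as a polyhedral set by construction. The only thing to check for it to be a \emph{polyhedron} in the sense of this paper (non-empty and strongly convex) is non-emptiness — strong convexity is automatic since it is cut out of $N_\R$ by finitely many affine inequalities, and such a set is strongly convex iff it contains no line, but actually the paper's "polyhedron" just means a possibly-unbounded convex polyhedral set that is strongly convex; I should double-check: a finite intersection of halfspaces can contain a line, so strong convexity is not automatic either. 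However — and this is where the hypothesis enters — I claim the point $x_P\in N_\R$ coming from $P$ lies in this set, which gives both non-emptiness directly. Here is the mechanism: since $P\in\ContBase{\base}\base[\Mon]$, by Proposition~\ref{coro: matrix_form} we may choose a defining matrix $\matt$ for $P$ whose first column is $e_1$; its first row then has the form $(1,\xi)$ for some $\xi\in N_\R$, and evaluating $|\cdot|_P$ on monomials, the top lexicographic coordinate of $|t^\gamma\chi^u|_P$ is $\gamma+\angbra{\xi,u}$. The hypothesis $1_{\kappa(P)}=|1_\base|_P\geq|a_i\chi^{u_i}|_P$ means $0\geq_{\lex}(\gamma_i+\angbra{\xi,u_i},\ldots)$, so in the first coordinate $\gamma_i+\angbra{\xi,u_i}\leq 0$, i.e.\ $\angbra{\xi,u_i}\leq-\gamma_i$ for every $i$. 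Thus $\xi\in R(1_\base,a_1\chi^{u_1},\ldots,a_n\chi^{u_n})$, so the set is non-empty. For strong convexity: it is $\Gamma$-rational and non-empty, and I expect the paper intends "$\Gamma$-rational polyhedron" to allow lineality; re-reading Definition~\ref{def: gamma-rational-nbhd} and the flag definitions, "polyhedron" in this paper is required to be strongly convex, so I do need an argument. But I think the cleanest route is: the conclusion we actually need downstream is only that $R(\cdots)$ is a $\Gamma$-rational polyhedral set lying in the filter; I'll state Part 2 as "$R(1_\base,a_1\chi^{u_1},\ldots,a_n\chi^{u_n})$ is a $\Gamma$-rational polyhedron" matching the paper and, for strong convexity, invoke that it is cut out by the inequalities $\angbra{x,u_i}\le-\gamma_i$ whose normal vectors $u_i$ — together with any others forced by the ambient monoid being all of $\Lambda$ in the torus case — need not span, so in fact strong convexity can fail; hence I suspect the intended reading is the weaker "polyhedral set," or that "polyhedron" here does not require strong convexity. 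I will follow the paper's conventions as literally stated and, if strong convexity is genuinely required, supply it from the non-emptiness of the corresponding relative-interior point, but flag this as the delicate point.

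\textbf{Main obstacle.} The substantive content is entirely in Part 2: extracting, from the algebraic inequality $|1_\base|_P\ge|a_i\chi^{u_i}|_P$ in the lex-ordered value group, the geometric inequality $\angbra{\xi,u_i}\le-\gamma_i$ for the first-row vector $\xi$ of a normalized defining matrix, and thereby exhibiting a point of $R(\cdots)$. Part 1 is a direct unwinding of definitions plus Lemma~\ref{lemma:BasicPropertiesOfRatSets}. The one genuine subtlety I anticipate is reconciling the word "polyhedron" (strong convexity) with the fact that finitely many halfspace inequalities in $N_\R$ need not produce a strongly convex set; I would resolve this by confirming the paper's convention, and if strong convexity is required, note that it follows from the existence of the interior point witnessing all inequalities strictly — or simply restate the lemma's conclusion at the level of polyhedral sets, which is all that is used in the filter definition.
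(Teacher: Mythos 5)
Your proof is correct and follows essentially the same route as the paper. Part 1 matches exactly: decompose into binary half-spaces via Lemma~\ref{lemma:BasicPropertiesOfRatSets}(\ref{item:RationalSetAsIntersectionOfBinaryRatSets}) and observe each binary set $\wt R(1_\base,a_i\chi^{u_i})$ is a $\Gamma$-admissible half-space. Part 2 uses the same key step as the paper: invoke Proposition~\ref{coro: matrix_form} to choose a defining matrix for $P$ whose first column is $e_1$, extract the first row $(1,\xi_0)$, and read off from the first lex coordinate of $|a_i\chi^{u_i}|_P$ that $\log(a_i)+\angbra{\xi_0,u_i}\le 0$, so $\xi_0$ witnesses non-emptiness.

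One comment on the strong-convexity digression: you spend a lot of space worrying whether $\wt R(\cdots)$ and $R(\cdots)$ are strongly convex, but the paper's own proof does not address this at all — it simply exhibits the defining form (for $\Gamma$-admissibility/$\Gamma$-rationality) and the nonemptiness witness, and stops. This is because for what the lemma feeds into (the filter definitions, which are on lattices of $\Gamma$-admissible fan support sets and $\Gamma$-rational polyhedral sets respectively) only the "is a $\Gamma$-admissible/$\Gamma$-rational set" and "is nonempty" assertions are needed; a finite intersection of half-spaces, even if not strongly convex, is still a $\Gamma$-admissible fan support set, resp.\ a $\Gamma$-rational polyhedral set. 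You eventually reach this conclusion yourself, but it would have been cleaner to match the paper's level of concern rather than flagging a "delicate point" that the paper treats as a non-issue.
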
\begin{proof}

Note that, for $1\leq i\leq n$, $\tilde{R}(1_\base,a_i\chi^{u_i})=\{(r,x)\in\R_{\geq0}\times N_{\R}\;:\;0_{\R}\geq r\log(a_i)+_{\R}\angbra{x,u}\}$ is a $\Gamma$-admissible half-space. So $\wt{R}(1_\base,a_1\chi^{u_1},\ldots,a_n\chi^{u_n})=\dcap_{i=1}^n \wt{R}(1_\base,a_i\chi^{u_i})$ is a $\Gamma$-admissible cone.

Now 
say 
$P\in\ContBase{\base}\base[\Mon]$. Since $R(1_\base,a_1\chi^{u_1},\ldots,a_n\chi^{u_n})$ can be written as the restriction of $\wt{R}(1_\base,a_1\chi^{u_1},\ldots,a_n\chi^{u_n})$ to $\{1\}\times N_{\R}$, it remains only to show that $R(1_\base,a_1\chi^{u_1},\ldots,a_n\chi^{u_n})$ is nonempty.
By Proposition~\ref{coro: matrix_form}, we can pick a defining matrix for $P$ of the form $\begin{pmatrix}1 & \xi_0\\0 &\xi_1 \\ \vdots & \vdots  \\ 0 & \xi_k \end{pmatrix}$ with $\xi_0,\ldots,\xi_k$ in $N_{\R}$. Since $1_{\kappa(P)}\geq|a_i\chi^{u_i}|_P$, we have that $0\geq\log(a_i)+_{\R}\angbra{\xi_0,u_i}$, i.e., $\xi_0\in R(1_\base,a_i\chi^{u_i})$. Therefore, $\xi_0\in\dcap_{i=1}^n R(1_\base,a_i\chi^{u_i})= R(1_\base,a_1\chi^{u_1},\ldots,a_n\chi^{u_n}).$

\end{proof}

At the end of the proof of the previous lemma, we took advantage of the fact that we could evaluate whether $1_{\kappa(P)}\geq|a\chi^{u}|_P$ by picking a defining matrix for $P$ and using the lexicographic order. We now spell out the definition of the lexicographic order as it applies here and work towards translating it into geometric conditions.
Let 
$$\matt=\begin{pmatrix} v_0\\ v_1 \\ \vdots  \\  v_k\end{pmatrix}$$ be a defining matrix for a prime congruence $P$ on $\base[\Mon]$. Let $\mathfrak{u}=\begin{pmatrix}
\log(a)\\u
\end{pmatrix}$ be the exponent vector of $a\chi^u$. Then 
$1_\base \geq |a\chi^{u}|_P$ is equivalent to 
$$\begin{pmatrix} \angbra{ v_0,\mathfrak{u}}\\ \angbra{ v_1,\mathfrak{u}}\\  \vdots  \\  \angbra{ v_k,\mathfrak{u}} \end{pmatrix} \leq_{lex} \begin{pmatrix} 0 \\ 0 \\ \vdots \\ 0\end{pmatrix},$$
which happens whenever
\begin{align}\label{eq: conditions-rational-sets}
    \angbra{ v_0,\mathfrak{u}} &< 0 \text{, or } \nonumber\\
    \angbra{ v_0,\mathfrak{u}} &= 0 \text{ and  } \angbra{ v_1,\mathfrak{u}} < 0\text{, or } \nonumber\\
    &\ \ \  \vdots \\
    \angbra{ v_0,\mathfrak{u}} &= 0 , \ldots, \angbra{ v_{k-2},\mathfrak{u}}= 0 \text{, and  }\angbra{ v_{k-1},\mathfrak{u}}<0 \text{, or } \nonumber\\
    \angbra{ v_0,\mathfrak{u}}&= 0 , \ldots, \angbra{ v_{k-1},\mathfrak{u}}= 0 \text{, and  }\angbra{ v_k,\mathfrak{u}}\leq 0. \nonumber
\end{align}
Equivalently,
\begin{align}\label{eq: conditions-rational-eqiv}
    \angbra{ v_0,\mathfrak{u}} &< 0 \text{, or } \nonumber\\
    \angbra{ v_0,\mathfrak{u}} &\leq 0 \text{ and  } \angbra{ v_1,\mathfrak{u}} < 0\text{, or } \nonumber\\
    &\ \ \  \vdots \\
    \angbra{ v_0,\mathfrak{u}} &\leq 0 , \ldots, \angbra{ v_{k-2},\mathfrak{u}}\leq 0 \text{, and  }\angbra{ v_{k-1},\mathfrak{u}}<0 \text{, or } \nonumber\\
    \angbra{ v_0,\mathfrak{u}}&\leq 0 , \ldots, \angbra{ v_{k-1},\mathfrak{u}}\leq 0 \text{, and  }\angbra{ v_k,\mathfrak{u}}\leq 0. \nonumber
\end{align}


For convenience, we label the conditions in (\ref{eq: conditions-rational-sets}) as (\ref{eq: conditions-rational-sets}.0), (\ref{eq: conditions-rational-sets}.1), $\ldots$, (\ref{eq: conditions-rational-sets}.$k-1$), and (\ref{eq: conditions-rational-sets}.$k$). Similarly, we label the conditions in (\ref{eq: conditions-rational-eqiv}) as (\ref{eq: conditions-rational-eqiv}.0), (\ref{eq: conditions-rational-eqiv}.1), $\ldots$, (\ref{eq: conditions-rational-eqiv}.$k-1$), and (\ref{eq: conditions-rational-eqiv}.$k$).

\begin{remark}

Suppose $\matt'$ is obtained from $\matt$ by downwards gaussian elimination. The proof of Lemma~\ref{lemma:rref} not only shows that $\matt\mathfrak{u}\leq_{lex}\mbo{0}$ if and only if $\matt'\mathfrak{u}\leq_{lex}\mbo{0}$, but that, for $0\leq i\leq k$, (\ref{eq: conditions-rational-sets}.$i$) is satisfied for $\matt\mathfrak{u}\leq_{lex}\mbo{0}$ if and only if it is satisfied for $\matt'\mathfrak{u}\leq_{lex}\mbo{0}$. Since (\ref{eq: conditions-rational-eqiv}.$i$) is satisfied exactly if (\ref{eq: conditions-rational-sets}.$j$) is satisfied for some $j\leq i$, we also get that (\ref{eq: conditions-rational-eqiv}.$i$) is satisfied for $\matt\mathfrak{u}\leq_{lex}\mbo{0}$ if and only if it is satisfied for $\matt'\mathfrak{u}\leq_{lex}\mbo{0}$.
\end{remark}

The following proposition gives a geometric interpretation to the above conditions. In order to state it, we define $\wt{R}^\circ(f,g)=\{ w\in\R_{\geq0}\times N_{\R}:\wt{f}( w)>\wt{g}( w)\}$. When we are considering a half-space $\wt{R}(1_\base,a\chi^u)$, $\wt{R}^\circ(1_\base,a\chi^u)$ is the corresponding strict half-space.

\begin{prop}\label{prop:GeomEvalOfPNorm}

Let $\calc_\bullet$ be a simplicial flag of cones, and let $P=P_{\calc_\bullet}$. For any term $a\chi^u\in\base[\Mon]$, $1_{\kappa(P)}\geq|a\chi^u|_P$ if and only if 
\begin{align}\label{eq: conditions-rational-sets-geom}
    &\relint\calc_0\subseteq\wt{R}^\circ(1_\base, a \chi^{u}) \text{, or } \nonumber\\
    &\relint\calc_1\subseteq\wt{R}^\circ(1_\base, a \chi^{u})\text{, or}\nonumber\\
    &\qquad\qquad\quad \vdots\\
    &\relint\calc_{k-1}\subseteq\wt{R}^\circ(1_\base, a \chi^{u})\text{, or }\nonumber\\
    &\calc_k\subseteq\wt{R}(1_\base, a \chi^{u}).\nonumber
\end{align}
\end{prop}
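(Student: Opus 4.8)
The plan is to translate the lexicographic condition in \eqref{eq: conditions-rational-sets} (or its equivalent form \eqref{eq: conditions-rational-eqiv}) into the geometric language of \eqref{eq: conditions-rational-sets-geom} by choosing a defining matrix $\matt$ for $P=P_{\calc_\bullet}$ adapted to the flag. Concretely, since $\calc_\bullet$ is simplicial, I would pick generators $v_0,\ldots,v_k$ with $v_i$ spanning the unique ray of $\calc_i$ not in $\calc_{i-1}$, so that $\calc_i$ is generated by $v_0,\ldots,v_i$ and, by Definition~\ref{def:PrimeOfAFlag} and the discussion preceding it, the matrix with rows $v_0,\ldots,v_k$ is a defining matrix for $P$. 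Then for a term $a\chi^u$ with exponent vector $\mathfrak{u}=\binom{\log a}{u}$, the quantity $\angbra{v_i,\mathfrak{u}}$ is exactly $\wt{(a\chi^u)}(v_i)-\wt{1_\base}(v_i)$ up to the obvious sign; more precisely $\wt{1_\base}(w)=0$ for all $w$, so $\angbra{v_i,\mathfrak{u}}<0$ says $v_i\in\wt{R}^\circ(1_\base,a\chi^u)$, $\angbra{v_i,\mathfrak{u}}=0$ says $v_i$ lies on the bounding hyperplane, and $\angbra{v_i,\mathfrak{u}}\leq 0$ says $v_i\in\wt{R}(1_\base,a\chi^u)$. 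So the task is purely to match up the Boolean combinations.

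The key observation making the matching work is that $\wt{R}(1_\base,a\chi^u)$ is a half-space (a single linear inequality $\ell\le 0$ where $\ell(w)=\angbra{w,\cdot}$ applied to $\mathfrak{u}$) and $\wt{R}^\circ(1_\base,a\chi^u)$ is the corresponding open half-space, as noted just before the proposition. For a cone $\calc_i$ generated by $v_0,\ldots,v_i$: (a) $\calc_i\subseteq\wt{R}(1_\base,a\chi^u)$ iff $\ell(v_0)\le 0,\ldots,\ell(v_i)\le 0$, since a linear functional is $\le 0$ on a cone iff it is $\le 0$ on its generators; and (b) $\relint\calc_i\subseteq\wt{R}^\circ(1_\base,a\chi^u)$ iff $\ell$ is strictly negative somewhere on $\relint\calc_i$ and $\le 0$ on all of $\calc_i$ — equivalently $\ell(v_0)\le 0,\ldots,\ell(v_i)\le 0$ with at least one strict — because a point of $\relint\calc_i$ is a positive combination of all the generators, so $\ell$ at that point is $<0$ precisely when some $\ell(v_j)<0$ while none is positive. (If some $\ell(v_j)>0$ then $\ell$ is positive on part of $\relint\calc_i$, so that disjunct fails.) Thus condition \eqref{eq: conditions-rational-sets-geom}.$i$ for $i<k$ is equivalent to: $\ell(v_0)\le0,\ldots,\ell(v_{i-1})\le0$ and $\ell(v_i)<0$ — wait, one must be careful: $\relint\calc_i\subseteq\wt R^\circ$ needs \emph{some} generator strictly negative, not necessarily the last one. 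Here I would invoke the Remark following \eqref{eq: conditions-rational-eqiv}: conditions \eqref{eq: conditions-rational-sets}.$i$ and \eqref{eq: conditions-rational-eqiv}.$i$ are unchanged under downward Gaussian elimination, and \eqref{eq: conditions-rational-eqiv}.$i$ holds iff \eqref{eq: conditions-rational-sets}.$j$ holds for some $j\le i$, which is exactly ``$\ell(v_0)\le0,\ldots,\ell(v_{i})\le0$ with at least one of $\ell(v_0),\ldots,\ell(v_i)$ strictly negative'' — matching (b). So I match \eqref{eq: conditions-rational-sets-geom}.$i$ with \eqref{eq: conditions-rational-eqiv}.$i$ for $i<k$, and \eqref{eq: conditions-rational-sets-geom}.$k$ (namely $\calc_k\subseteq\wt R(1_\base,a\chi^u)$) with \eqref{eq: conditions-rational-eqiv}.$k$ via (a). Since $1_{\kappa(P)}\ge|a\chi^u|_P$ is equivalent to $\matt\mathfrak u\le_{lex}\mbo 0$, which is the disjunction of \eqref{eq: conditions-rational-sets}.$0$ through \eqref{eq: conditions-rational-sets}.$k$, and equivalently the disjunction of \eqref{eq: conditions-rational-eqiv}.$0$ through \eqref{eq: conditions-rational-eqiv}.$k$, I get the result.

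Carrying this out, the steps in order are: (1) fix the adapted defining matrix $\matt$ with rows $v_0,\ldots,v_k$ spanning the rays of the simplicial flag, citing Definition~\ref{def:PrimeOfAFlag}; (2) record that $\wt{R}(1_\base,a\chi^u)$ and $\wt R^\circ(1_\base,a\chi^u)$ are the (closed/open) half-spaces cut out by $\ell(\mathfrak u):=\angbra{\cdot,\mathfrak u}$, so that $v_i\in\wt R(1_\base,a\chi^u)\iff\angbra{v_i,\mathfrak u}\le0$ and $v_i\in\wt R^\circ(1_\base,a\chi^u)\iff\angbra{v_i,\mathfrak u}<0$; (3) prove the two cone lemmas (a) and (b) above about when a half-space contains a simplicial cone, resp.\ when an open half-space contains the relative interior — both are elementary since $\relint\calc_i=\{\sum_{j\le i}\lambda_j v_j:\lambda_j>0\}$; (4) use the Remark to rewrite $1_{\kappa(P)}\ge|a\chi^u|_P$ as the disjunction of the conditions \eqref{eq: conditions-rational-eqiv}.$i$; (5) match \eqref{eq: conditions-rational-eqiv}.$i$ with \eqref{eq: conditions-rational-sets-geom}.$i$ for $i<k$ using (b), and \eqref{eq: conditions-rational-eqiv}.$k$ with \eqref{eq: conditions-rational-sets-geom}.$k$ using (a); conclude. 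The main obstacle is step (3)/(5): keeping straight that the disjunct \eqref{eq: conditions-rational-sets-geom}.$i$ ``$\relint\calc_i\subseteq\wt R^\circ$'' corresponds not to the single lex-clause \eqref{eq: conditions-rational-sets}.$i$ but to the cumulative clause \eqref{eq: conditions-rational-eqiv}.$i$, so that the disjunction over $i$ still comes out right; the Remark is precisely what licenses passing between these two forms, and the independence from the choice of the $v_i$ (any positive rescaling, or the alternative generators $w_i$) follows because rescaling rows positively is a downward Gaussian elimination move and leaves each \eqref{eq: conditions-rational-sets}.$i$ and \eqref{eq: conditions-rational-eqiv}.$i$ invariant, while also not changing $\relint\calc_i$ or the half-spaces involved.
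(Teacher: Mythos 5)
Your overall strategy matches the paper's: fix an adapted simplicial defining matrix, translate the pairings $\angbra{v_j,\mathfrak{u}}$ into half-space membership, and compare the disjunction $\matt\mathfrak{u}\le_{\lex}\mbo{0}$ to the disjunction in \eqref{eq: conditions-rational-sets-geom}. The gap is in step~(5), where you claim a term-by-term equivalence between \eqref{eq: conditions-rational-eqiv}.$i$ and \eqref{eq: conditions-rational-sets-geom}.$i$. This is false. By your own (correct) lemma~(b), \eqref{eq: conditions-rational-sets-geom}.$i$ for $i<k$ is ``$\ell(v_0)\le0,\ldots,\ell(v_i)\le0$ and at least one is strict,'' while \eqref{eq: conditions-rational-eqiv}.$i$ requires specifically that $\ell(v_i)<0$. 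Taking $\ell(v_0)=-1$ and $\ell(v_1)=\cdots=\ell(v_i)=0$ (with $0<i<k$) satisfies \eqref{eq: conditions-rational-sets-geom}.$i$ but not \eqref{eq: conditions-rational-eqiv}.$i$. Invoking the Remark that follows \eqref{eq: conditions-rational-eqiv} does not repair this: even granting its assertion that \eqref{eq: conditions-rational-eqiv}.$i$ holds exactly when some \eqref{eq: conditions-rational-sets}.$j$ with $j\le i$ holds, that intermediate condition does not imply ``$\ell(v_0)\le0,\ldots,\ell(v_i)\le0$'' either (take $\ell(v_0)=-1$, $\ell(v_1)=5$, $i=1$: \eqref{eq: conditions-rational-sets}.$0$ holds but $\ell(v_1)>0$). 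In fact the Remark's displayed biconditional is itself misstated; only the implication from \eqref{eq: conditions-rational-eqiv}.$i$ to ``some \eqref{eq: conditions-rational-sets}.$j$, $j\le i$'' is valid.

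What saves the proposition is that the two \emph{disjunctions} agree even though the individual disjuncts do not line up index-for-index, and you should not pin the index. The fix, which is also what the paper does, is an asymmetric argument: the forward direction works term-by-term (if \eqref{eq: conditions-rational-eqiv}.$i$ holds, then for $w=\sum_{j\le i}\epsilon_j v_j\in\relint\calc_i$ with $\epsilon_j>0$ you get $\angbra{w,\mathfrak{u}}\le\epsilon_i\angbra{v_i,\mathfrak{u}}<0$, so \eqref{eq: conditions-rational-sets-geom}.$i$ holds), while the reverse direction lands at a \emph{possibly smaller} index: if \eqref{eq: conditions-rational-sets-geom}.$i$ holds for some $i<k$, then all $\angbra{v_j,\mathfrak{u}}\le0$ for $j\le i$ and, since some $w\in\relint\calc_i$ has $\angbra{w,\mathfrak{u}}<0$, at least one $\angbra{v_{j_0},\mathfrak{u}}<0$ with $j_0\le i$, so \eqref{eq: conditions-rational-eqiv}.$j_0$ holds. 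The $i=k$ case is as you describe. Your lemmas~(a) and~(b) and the rest of the setup are sound; it is only the asserted condition-by-condition matching (and the appeal to the Remark to justify it) that needs to be replaced.
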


\begin{remark}\label{rem:ResultsOfConditionsInProp:GeomEvalOfPNorm}
As before, we label the conditions in (\ref{eq: conditions-rational-sets-geom}) as (\ref{eq: conditions-rational-sets-geom}.0), (\ref{eq: conditions-rational-sets-geom}.1), $\ldots$, (\ref{eq: conditions-rational-sets-geom}.$k-1$), and (\ref{eq: conditions-rational-sets-geom}.$k$). Note that, if (\ref{eq: conditions-rational-sets-geom}.$i$) is satisfied, then, by taking closures in $\R_{\geq0}\times N_{\R}$, we get that $\calc_i\subseteq\wt{R}(1_\base,a\chi^u)$ and so also $\calc_j\subseteq\wt{R}(1_\base,a\chi^u)$ for all $j\leq i$. In particular, if $1_{\kappa(P)}\geq|a\chi^u|_P$ then $\calc_0\subseteq\wt{R}(1_\base,a\chi^u)$.
\end{remark}
\begin{proof}[Proof of Proposition~\ref{prop:GeomEvalOfPNorm}]
Fix a matrix $\matt=\begin{pmatrix}
 v_0\\ v_1\\\vdots\\ v_k
\end{pmatrix}$ such that $\calc_\bullet=\calc_\bullet(\matt)$.
 Let $a\chi^u$ be any term of $\base[\Mon]$ and let $\mathfrak{u}=\begin{pmatrix}
\log(a)\\u
\end{pmatrix}$ be the exponent vector of $a\chi^u$.
\newcommand{\acoeff}{\epsilon}
Suppose that $1_{\kappa(P)}\geq|a\chi^u|_P$, and, specifically, (\ref{eq: conditions-rational-eqiv}.$i$) is satisfied. If $i=k$ then $ v_0,\ldots, v_k$ are contained in the half-space $\wt{R}(1_\base,a\chi^u)$, so the cone $\calc_k$ that they generate is also contained in $\wt{R}(1_\base,a\chi^u)$, i.e., (\ref{eq: conditions-rational-sets-geom}.$k$) is satisfied. So now suppose $i<k$, and consider any $ w\in\relint\calc_i$. Since $\calc_i$ is the simplicial cone generated by $ v_0, v_1,\ldots, v_i$, we can write $ w=\dsum_{j=0}^i \acoeff_j v_j$ with $\acoeff_j>0$. Since we have $\angbra{ v_j,\mathfrak{u}}\leq 0$ for $j<i$ and $\angbra{ v_i,\mathfrak{u}}<0$, we have
$$\angbra{ w,\mathfrak{u}}=\dsum_{j=0}^i\acoeff_j\angbra{ v_j,\mathfrak{u}}\leq\acoeff_i\angbra{ v_i,\mathfrak{u}}<0,$$
so $ w\in\wt{R}^\circ(1_\base,a\chi^u)$. Thus (\ref{eq: conditions-rational-sets-geom}.$i$) is satisfied.

Now suppose that (\ref{eq: conditions-rational-sets-geom}.$i$) is satisfied for some $0\leq i\leq k$. If $i=k$ then $ v_0,\ldots, v_k\in\calc_k\subseteq\wt{R}(1_\base,a\chi^u)$, so $\angbra{ v_j,\mathfrak{u}}\leq0$ for all $0\leq j\leq k$, i.e., (\ref{eq: conditions-rational-eqiv}.$k$) is satisfied. So now suppose $i<k$, giving us $\relint\calc_i\subseteq\wt{R}^\circ(1_\base,a\chi^u)$ and thus $\calc_i\subseteq\wt{R}(1_\base,a\chi^u)$. In particular, for all $j\leq i$, $ v_j\in\calc_i\subseteq\wt{R}(1_\base,a\chi^u)$ gives us that $\angbra{ v_j,\mathfrak{u}}\leq 0$. Pick a point $ w\in\relint\calc_i$, so $\angbra{ w,\mathfrak{u}}<0$. Since $ w$ is in the relative interior of the simplicial cone generated by $ v_0,\ldots, v_i$, we can write $ w=\dsum_{j=0}^i\acoeff_j v_j$ with $\acoeff_j>0$. Since
$$0>\angbra{ w,\mathfrak{u}}=\dsum_{j=0}^i\acoeff_j\angbra{ v_j,\mathfrak{u}},$$
we must have that $\angbra{ v_j,\mathfrak{u}}<0$ for some $j\leq i$. Then (\ref{eq: conditions-rational-eqiv}.$j$) is satisfied.
\end{proof}

With this background, we are now prepared to show that $\wtcalf_P$ and $\calf_P$ are prime filters.

\begin{theorem}\label{thm:TheFilterIsAPrimeFilter}
Let $P$ be a prime congruence on $\base[\Mon]$. Then $\wt{\calf}_P$ is a prime filter on the lattice of \conoidSets. If $P\in \ContBase{\base}{\base[\Mon]}$, then $\mathcal{F} = \mathcal{F}_P$ is a prime filter on the \BigLarry. 
\end{theorem}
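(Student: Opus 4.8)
The plan is to verify the three defining properties of a prime filter on a lattice directly from the definition of $\wt{\calf}_P$ (and $\calf_P$), reducing everything to the monomial case via the reduction lemma already proven, and using Proposition~\ref{prop:GeomEvalOfPNorm} to translate the algebraic conditions $1_{\kappa(P)}\geq|a\chi^u|_P$ into geometric containment statements. Recall that a nonempty subset $\calf$ of a lattice $L$ is a prime filter if: (i) it is upward closed; (ii) it is closed under finite meets (so it is a filter); and (iii) if $x\vee y\in\calf$ then $x\in\calf$ or $y\in\calf$. Since in our lattices meet and join are intersection and union, (ii) says $\wt{U},\wt{U}'\in\wt{\calf}_P\Rightarrow\wt{U}\cap\wt{U}'\in\wt{\calf}_P$ and (iii) says $\wt{U}\cup\wt{U}'\in\wt{\calf}_P\Rightarrow\wt{U}\in\wt{\calf}_P$ or $\wt{U}'\in\wt{\calf}_P$. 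I will carry out the argument for $\wt{\calf}_P$ first; the statement for $\calf_P$ then follows by the correspondence $U\in\calf_P\iff c(U)\in\wt{\calf}_P$ established in the definition, together with the fact that $c(\bullet)$ commutes with finite unions and intersections and preserves and reflects inclusions (so a prime filter pulls back to a prime filter).

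First, nonemptiness: the ambient set $\R_{\geq0}\times N_{\R}$ itself is a $\Gamma$-admissible \conoidSet{} containing $\wt{R}(1_\base)$ (taking $n=0$, with the empty list of extra terms), so it lies in $\wt{\calf}_P$; hence $\wt{\calf}_P\neq\emptyset$. Upward closure (i) is immediate from the definition: if $\wt{R}(f_0,\ldots,f_n)\subseteq\wt{U}\subseteq\wt{U}'$ with $\wt{U}'$ a $\Gamma$-admissible \conoidSet, the same witnessing data shows $\wt{U}'\in\wt{\calf}_P$. For closure under intersection (ii): given $\wt{U}$ witnessed by $f_0,\ldots,f_n$ (with $|f_0|_P\geq|f_i|_P$) and $\wt{U}'$ witnessed by $g_0,\ldots,g_m$, I would pass to the monomial form via the reduction lemma, write $\wt{U}\supseteq\wt{R}(1_\base,a_1\chi^{u_1},\ldots,a_n\chi^{u_n})$ and $\wt{U}'\supseteq\wt{R}(1_\base,b_1\chi^{w_1},\ldots,b_m\chi^{w_m})$ with all $|a_i\chi^{u_i}|_P\le 1_{\kappa(P)}$ and $|b_j\chi^{w_j}|_P\le 1_{\kappa(P)}$; then by Lemma~\ref{lemma:BasicPropertiesOfRatSets}(\ref{item:RationalSetAsIntersectionOfBinaryRatSets}) the concatenated list gives $\wt{R}(1_\base,a_1\chi^{u_1},\ldots,a_n\chi^{u_n},b_1\chi^{w_1},\ldots,b_m\chi^{w_m})=\wt{R}(1_\base,a_\bullet\chi^\bullet)\cap\wt{R}(1_\base,b_\bullet\chi^\bullet)\subseteq\wt{U}\cap\wt{U}'$, and all the monomial conditions still hold, so $\wt{U}\cap\wt{U}'\in\wt{\calf}_P$.

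The primeness condition (iii) is the main obstacle and is where Proposition~\ref{prop:GeomEvalOfPNorm} does the real work. Suppose $\wt{U}\cup\wt{U}'\in\wt{\calf}_P$; after monomial reduction we have terms $a_1\chi^{u_1},\ldots,a_n\chi^{u_n}$ with $1_{\kappa(P)}\geq|a_i\chi^{u_i}|_P$ and $\bigcap_{i=1}^n\wt{R}(1_\base,a_i\chi^{u_i})\subseteq\wt{U}\cup\wt{U}'$. Writing $P=P_{\calc_\bullet}$ for a simplicial flag of cones (possible by the discussion after Definition~\ref{def:PrimeOfAFlag}), Remark~\ref{rem:ResultsOfConditionsInProp:GeomEvalOfPNorm} gives $\calc_0\subseteq\bigcap_i\wt{R}(1_\base,a_i\chi^{u_i})\subseteq\wt{U}\cup\wt{U}'$; more importantly, let $j$ be the largest index with $\calc_j\subseteq\bigcap_i\wt{R}(1_\base,a_i\chi^{u_i})$. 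I would argue that in fact $\relint\calc_j$, or at least a suitable point of $\calc_j\sdrop\calc_{j-1}$ lying in all the relevant (possibly strict) half-spaces, is contained in one of $\wt{U},\wt{U}'$; since each of $\wt{U},\wt{U}'$ is a finite union of $\Gamma$-admissible cones and $\relint\calc_j$ is connected (indeed, convex), if a relatively open subset of $\calc_j$ avoiding lower-dimensional pieces meets, say, $\wt{U}$, then — using that the $\Gamma$-admissible cones comprising $\wt{U}\cup\wt{U}'$ have dimension at most $\dim\calc_j$ along $\calc_j$ — one of them contains $\calc_j$ (or contains $\relint\calc_j$, hence $\calc_j$ by taking closures). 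Concretely: a $\Gamma$-admissible cone that contains a nonempty relatively open subset of $\calc_j$ must contain $\relint\calc_j$, because its intersection with $\mathrm{span}(\calc_j)$ is a closed convex cone of full dimension there, hence contains the full-dimensional relatively open piece's closure. Thus some component cone $\calD$ of $\wt{U}$ (or of $\wt{U}'$) satisfies $\calc_j\subseteq\calD$; then the witnessing data $a_1\chi^{u_1},\ldots,a_n\chi^{u_n}$, whose rational set is $\subseteq$ (the relevant condition now reading $\calc_j\subseteq\wt{R}(1_\base,a_i\chi^{u_i})$ for all $i$ so condition (\ref{eq: conditions-rational-sets-geom}.$j$) holds) — wait, I need $\wt{R}(1_\base,a_\bullet\chi^\bullet)\subseteq\calD\subseteq\wt{U}$; but we only know $\calc_j$ is inside the rational set, not conversely. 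The fix: after identifying $\calD$ with $\calc_j\subseteq\calD$, replace the defining inequalities of $\calD$ (which are $\Gamma$-admissible, i.e., of the form $r\gamma+\langle x,u\rangle\le 0$ with $\gamma\in\Gamma$, $u\in\Mon$) by the corresponding terms $t^\gamma\chi^u$; then $\wt{R}(1_\base,\{t^{\gamma_l}\chi^{u_l}\}_l)=\calD$, and since $\calc_j\subseteq\calD$ each such term has $1_{\kappa(P)}\geq|t^{\gamma_l}\chi^{u_l}|_P$ by Proposition~\ref{prop:GeomEvalOfPNorm} (condition (\ref{eq: conditions-rational-sets-geom}.$j$) holds for it). This exhibits $\calD=\wt{R}(1_\base,\ldots)\subseteq\wt{U}$ as a witness, so $\wt{U}\in\wt{\calf}_P$. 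Finally, I must also handle the boundary case $j=k$ versus $j<k$ uniformly — for $j<k$ one works with the strict half-spaces $\wt{R}^\circ$ and $\relint\calc_j$ as above, for $j=k$ with the closed ones and $\calc_k$ itself — but in both cases the conclusion is that $\calc_j$ (respectively $\relint\calc_j$, whose closure is $\calc_j$) sits inside a single component of $\wt{U}$ or of $\wt{U}'$, which is exactly what condition (\ref{eq: conditions-rational-sets-geom}.$j$) of Proposition~\ref{prop:GeomEvalOfPNorm} needs to produce a monomial witness. I would then state that the argument for $\calf_P$ is identical after applying $c(\bullet)$, noting $c(\wt{U}\cap\wt{U}')$-type compatibilities, and that the names ``prime filter'' are thereby justified.
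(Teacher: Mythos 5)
Your framework for the proof is sound, and your treatment of nonemptiness, upward closure, and closure under finite intersection essentially matches the paper's. (One small omission: you never verify that $\wt{\calf}_P$ is a \emph{proper} filter, i.e.\ not the whole lattice; the paper checks this explicitly by noting that every element of $\wt{\calf}_P$ contains $\calc_0$, so, e.g., a different $\Gamma$-admissible ray cannot lie in $\wt{\calf}_P$.) The genuine gap is in the primeness step.

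Your key geometric claim --- ``a $\Gamma$-admissible cone that contains a nonempty relatively open subset of $\calc_j$ must contain $\relint\calc_j$, because its intersection with $\mathrm{span}(\calc_j)$ is a closed convex cone of full dimension there, hence contains the full-dimensional relatively open piece's closure'' --- is false. Take $\calc_j=\R_{\geq0}^2$ and $\calD=\{(x,y):x\geq y\geq0\}$. Then $\calD$ contains the nonempty relatively open piece $\{(x,y):x>y>0\}\subseteq\relint\calc_j$, but $\calD$ does not contain $\relint\calc_j$; the closure of that open piece is $\calD$ itself, not $\calc_j$. ``Full-dimensional'' is not ``all of it.'' Without this claim, you cannot conclude that some component cone of $\wt{U}$ or $\wt{U}'$ contains $\calc_j$: the set $\relint\calc_j$ could be split between $\wt{U}$ and $\wt{U}'$ along a hyperplane not coming from the walls of $\calc_\bullet$. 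There is a second, independent problem in the patch you attempt: even if you did have $\calc_j\subseteq\calD$ for $j<k$, that is only $\calc_j\subseteq\wt{R}(1_\base,t^{\gamma_l}\chi^{u_l})$ (closed inclusion), whereas condition (\ref{eq: conditions-rational-sets-geom}.$j$) of Proposition~\ref{prop:GeomEvalOfPNorm} requires the strict inclusion $\relint\calc_j\subseteq\wt{R}^{\circ}(1_\base,t^{\gamma_l}\chi^{u_l})$. Closed containment of $\calc_j$ does not yield $1_{\kappa(P)}\geq|t^{\gamma_l}\chi^{u_l}|_P$ unless $j=k$.

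The missing idea is a \emph{refinement}: you must replace $\calc_j$ (or rather the witnessing cone) by something that is already a cell of the subdivision induced by all the relevant hyperplanes. The paper's proof does exactly this. After reducing to cones and monomial witnesses, it assembles into one intersection the half-spaces defining each component $\wt{U}_i$ of $\wt{U}$ (flipping each inequality if necessary so that the corresponding term $b_l\chi^{\theta_l}$ satisfies $1_\base\geq|b_l\chi^{\theta_l}|_P$) together with the original witnessing half-spaces $\wt{R}(1_\base,c_j\chi^{\mu_j})$. The resulting cone $\Paul$ lies in $\wt{\calf}_P$, is contained in $\wt{U}$, and is a single cone of the fan $\Sigma$ cut out by the full hyperplane arrangement. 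Since each $\wt{U}_i$ is the support of a subfan of $\Sigma$, $\Paul$ is either contained in $\wt{U}_i$ or has $\relint\Paul\cap\wt{U}_i=\emptyset$; as $\Paul\subseteq\bigcup_i\wt{U}_i$, some $\wt{U}_{i_0}$ contains $\Paul$ and hence lies in $\wt{\calf}_P$. Without this refinement step your argument does not go through.
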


\begin{proof}
We need to check that $\wt{\calf}=\wt{\calf}_P$ satisfies the conditions in the definition of prime filter on a lattice, namely, (1) $\wt{\calf}$ is not the whole lattice, (2) $\wt{\calf}$ is not empty, (3) if $\wt{U}_1,\wt{U}_2\in\wt{\calf}$ then $\wt{U}_1\cap\wt{U}_2\in\wt{\calf}$, (4) if $\wt{U}\in\wt{\calf}$ and $\wt{V}\supseteq\wt{U}$ is a $\Gamma$-admissible \conoidSet, then $\wt{V}\in\calf_P$, and 
(5) if $\wt{U}=\dcup_{i=1}^m\wt{U}_i$ is in $\wt{\calf}$ and each $\wt{U}_i$ is a $\Gamma$-admissible \conoidSet, then some $\wt{U}_{i_0}$ is in $\wt{\calf}$.

(1) Write $P=P_{\calc_\bullet}$ for some simplicial flag $\calc_\bullet$ of cones. If $\wt{U}\in\wt{\calf}$ then $\calc_0\subseteq\wt{U}$. So, for any $\Gamma$-admissible ray $\rho\neq\calc_0$, $\rho\notin\wt{\calf}$.

(2) We have $\R_{\geq0}\times N_{\R}=\wt{R}(1_\base,0_\base)$, so $\R_{\geq0}\times N_\R\in\wt{\calf}$.

(3) Say $\wt{U}_1,\wt{U}_2\in\wt{\calf}$, so $\wt{U}_1\supseteq\dcap_{j\in J_1}\wt{R}(1_\base,a_j\chi^{u_j})$ and $\wt{U}_2\supseteq\dcap_{j\in J_2}\wt{R}(1_\base,a_j\chi^{u_j})$ for some finite indexing sets $J_1$ and $J_2$. Then $\wt{U}_1\cap\wt{U}_2\supseteq\dcap_{j\in J_1\cup J_2}\wt{R}(1_\base,a_j\chi^{u_j})$, so $\wt{U}_1\cap\wt{U}_2\in\wt{\calf}$.

(4) This is immediate from the definition of $\wt{\calf}$. 

\GeorgeStory{Where George meets Paul.}

(5) Suppose $\wt{U} = \cup_{i=1}^m \wt{U}_i$ is in $\wt{\calf}$ with each $\wt{U}_i$ a $\Gamma$-admissible \conoidSet; we want to show that some $\wt{U}_i$ is in $\wt{\calf}$. By writing each $\wt{U}_i$ as a finite union of $\Gamma$-admissible cones, we may assume without loss of generality that each $\wt{U}_i$ is a $\Gamma$-admissible cone. Let $n_0 = 0$. Note that we can write
$$\wt{U_i} = \bigcap_{l=n_{i-1}+1}^{n_i} \wt{R}(1_\base, a_l \chi^{u_l}),$$
for some increasing sequence of integers $n_0<n_1<\cdots<n_m$ and terms $a_l\chi^{u_l}\in\base[\Mon]$. 
Since $\wt{U} \in\wtcalf$, there are terms $c_1\chi^{\mu_1},\ldots,c_q\chi^{\mu_q}\in\base[\Mon]$ such that  $\wt{U} \supseteq \cap_{j=1}^q \wt{R}(1_\base, c_j \chi^{\mu_j}),$ and $1_\base \geq |c_j \chi^{\mu_j}|_P$. For $1 \leq l \leq n_m$, set
$$b_l\chi^{\theta_l} = \begin{cases} a_l \chi^{u_l}& \text{ if } 1_\base \geq |a_l \chi^{u_l}|_P \\ a_l^{-1} \chi^{-u_l}& \text{ otherwise}   \end{cases}.$$
We have $1_\base \geq |b_l\chi^{\theta_l}|_P$ for $1 \leq l \leq n_m$, so the cone
$$\Paul = \bigcap_{l=1}^{n_m} \wt{R}(1_\base, b_l\chi^{\theta_l}) \cap \bigcap_{j=1}^q \wt{R}(1_\base, c_j \chi^{\mu_j})$$
is in $\wtcalf$. Also, $\Paul \subseteq \bigcap_{j=1}^q \wt{R}(1_\base, c_j \chi^{\mu_j}) \subseteq \wt{U}.$
To finish the proof it is enough to see the following claim: 
for any $i$, either $\Paul \subseteq \wt{U}_i$ or $\wt{U}_i \cap \relint(\Paul) = \emptyset$. For, if this claim holds, then the fact that $\Paul\subseteq \wt{U}=\dcup_{i=1}^m\wt{U}_i$ implies that $\Paul$ is contained in some $\wt{U}_{i_0}$.

To prove the claim, consider the linear
hyperplane arrangement in $\R_{\geq0}\times N_\mathbb{R}$ with hyperplanes
\begin{align*}
    &\{(r,\xi)\in \R_{\geq0}\times N_\mathbb{R} : r\log(a_l) + \left< \xi, u_l\right> = 0\} \text{ for all } l \text{ and } \\
    &\{(r,\xi)\in \R_{\geq0}\times N_\mathbb{R} : r\log(c_j) + \left< \xi, \mu_j\right> = 0\} \text{ for all } j.
\end{align*}
This hyperplane arrangement defines a fan $\Sigma$ such that $\Paul \in \Sigma$ and the support of $\Sigma$ is $\R_{\geq0}\times N_\mathbb{R}$. Moreover, for every $i$, $\wt{U}_i$ is the support of a subfan $\Sigma_i$ of $\Sigma$. Since $\wt{U}_i$ is the disjoint union of $\relint (\mathfrak{D})$ for $\mathfrak{D} \in \Sigma_i$ and $\Paul\in\Sigma$, if $\relint(\Paul)$ meets $\wt{U}_i$, then $\Paul \in \Sigma_i$, implying that $\Paul \subseteq \wt{U}_i$.

The proof for $\calf=\calf_P$ is analogous. 
\end{proof}


Theorem~\ref{thm:TheFilterIsAPrimeFilter} immediately gives us the following corollary.

\begin{coro}\label{coro:FilterDeterminedByCones/Polyhedra}
For prime congruences $P$ on $\base[\Mon]$, $\wtcalf_P$ is determined by which $\Gamma$-admissible cones it contains. For $P\in\ContBase{\base}\base[\Mon]$, $\calf_P$ is determined by which $\Gamma$-rational polyhedra it contains.
\end{coro}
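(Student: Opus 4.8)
The statement to prove is Corollary~\ref{coro:FilterDeterminedByCones/Polyhedra}: that $\wtcalf_P$ is determined by which $\Gamma$-admissible cones it contains, and similarly $\calf_P$ is determined by which $\Gamma$-rational polyhedra it contains.

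The plan is to unwind the definition of a prime filter on a lattice and use the fact, established in Theorem~\ref{thm:TheFilterIsAPrimeFilter}, that $\wtcalf_P$ is a prime filter on the lattice of $\Gamma$-admissible fan support sets. The key observation is that every element of this lattice — every $\Gamma$-admissible fan support set $\wt{U}$ — is by definition a finite union $\wt{U}=\dcup_{i=1}^m\wt{U}_i$ of $\Gamma$-admissible cones $\wt{U}_i$. Because $\wtcalf_P$ is a prime filter, it satisfies the primality condition (condition (5) in the proof of Theorem~\ref{thm:TheFilterIsAPrimeFilter}): if such a union lies in $\wtcalf_P$, then one of the pieces $\wt{U}_{i_0}$ lies in $\wtcalf_P$. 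Conversely, by the upward-closure condition (condition (4)), if some $\Gamma$-admissible cone contained in $\wt{U}$ lies in $\wtcalf_P$, then $\wt{U}\in\wtcalf_P$ since $\wt{U}$ contains that cone.

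Putting these together: a $\Gamma$-admissible fan support set $\wt{U}$ lies in $\wtcalf_P$ if and only if, writing $\wt{U}$ as a finite union of $\Gamma$-admissible cones, at least one of those cones lies in $\wtcalf_P$. (Both directions were just noted; the ``only if'' uses primality, the ``if'' uses upward closure, and one should note the decomposition into cones is not unique but the equivalence holds for any such decomposition.) Hence the subcollection of $\Gamma$-admissible cones in $\wtcalf_P$ completely determines $\wtcalf_P$: two prime filters containing the same $\Gamma$-admissible cones must contain the same $\Gamma$-admissible fan support sets. The statement for $\calf_P$ when $P\in\ContBase{\base}\base[\Mon]$ follows by the identical argument, using that $\calf_P$ is a prime filter on the lattice of $\Gamma$-rational polyhedral sets and that every $\Gamma$-rational polyhedral set is a finite union of $\Gamma$-rational polyhedra; alternatively, one can transport the result through the correspondence $U\mapsto c(U)$, under which $U\in\calf_P$ iff $c(U)\in\wtcalf_P$ and $\Gamma$-rational polyhedra correspond to $\Gamma$-admissible cones not contained in $\{0\}\times N_{\R}$.

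There is no real obstacle here — this is a direct formal consequence of the prime-filter axioms combined with the fact that the generators of the lattice (cones, resp.\ polyhedra) generate every element under finite unions. The only point requiring a small amount of care is making explicit that the primality axiom is exactly what lets us pass from a union being in the filter to one of its cone-summands being in the filter, and that this is the content needed; everything else is immediate from upward closure. This is why the corollary is stated as an immediate consequence of Theorem~\ref{thm:TheFilterIsAPrimeFilter}.
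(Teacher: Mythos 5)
Your proof is correct and takes the same route as the paper: the paper simply states that the corollary follows immediately from Theorem~\ref{thm:TheFilterIsAPrimeFilter}, and what you have written is the spelled-out version of that ``immediately'' — primality to extract a cone from a union in the filter, upward closure to go the other way.
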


\subsection{Flags and filters.}

We now directly relate the filters $\calf$ and $\wtcalf$ back to flags of polyhedra and cones. We start with a lemma that will facilitate our inductive argument.

\begin{lemma}\label{lemma:TruncationAndFilter}
Let $\calc_\bullet=(\calc_0\leq\cdots\leq\calc_k)$ be a simplicial flag of cones, let $P=P_{\calc_\bullet}$, and let $\wtcalf=\wtcalf_P$. For any $0\leq j<k$, consider the truncation $\calc_\bullet^{(j)}$, let $P'=P_{\calc_\bullet^{(j)}}$, and let $\wtcalf'=\wtcalf_{P'}$. Then $\wtcalf\subseteq\wtcalf'$.
\end{lemma}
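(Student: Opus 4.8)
The goal is to show that every $\Gamma$-admissible \conoidSet{} in $\wtcalf = \wtcalf_P$ also lies in $\wtcalf' = \wtcalf_{P'}$. By Corollary~\ref{coro:FilterDeterminedByCones/Polyhedra} it suffices to check containment for a $\Gamma$-admissible \emph{cone} $\wt U$, and by the reduction lemma (the one immediately after the definition of $\wtcalf_P$) it is enough to assume $\wt U \supseteq \wt R(1_\base, a_1\chi^{u_1},\ldots,a_n\chi^{u_n})$ where $1_{\kappa(P)}\geq |a_i\chi^{u_i}|_P$ for each $i$. Since $\wtcalf'$ is also a filter, it is enough to prove that each $\wt R(1_\base, a_i\chi^{u_i})$ — equivalently, each individual half-space appearing — lies in $\wtcalf'$; that is, it suffices to show the single-term statement: if $a\chi^u\in\base[\Mon]$ satisfies $1_{\kappa(P)}\geq |a\chi^u|_P$, then $1_{\kappa(P')}\geq |a\chi^u|_{P'}$ as well (because then $\wt R(1_\base,a_i\chi^{u_i})\in\wtcalf'$ for every $i$, hence their intersection is, hence the larger set $\wt U$ is, using filter conditions (3) and (4)).

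\textbf{Key step.} For the single-term statement I will use the geometric criterion of Proposition~\ref{prop:GeomEvalOfPNorm} applied to both $\calc_\bullet$ and its truncation $\calc_\bullet^{(j)}$. The condition $1_{\kappa(P)}\geq|a\chi^u|_P$ means one of the conditions (\ref{eq: conditions-rational-sets-geom}.$i$) holds for $\calc_\bullet$ with $0\leq i\leq k$: either $\relint\calc_i\subseteq \wt R^\circ(1_\base,a\chi^u)$ for some $i<k$, or $\calc_k\subseteq \wt R(1_\base,a\chi^u)$. In either case, by Remark~\ref{rem:ResultsOfConditionsInProp:GeomEvalOfPNorm} we get $\calc_0\subseteq \wt R(1_\base,a\chi^u)$, and in fact there is a smallest index $i_0$ with $\calc_{i_0}\subseteq \wt R(1_\base,a\chi^u)$; for that $i_0$ we have either $i_0 < k$ and $\relint\calc_{i_0}\subseteq \wt R^\circ(1_\base,a\chi^u)$, or $i_0 = k$. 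Now $\calc_\bullet^{(j)}$ has the same cones $\calc_0\leq\cdots\leq\calc_j$, so if $i_0\leq j$ the corresponding condition (\ref{eq: conditions-rational-sets-geom}.$i_0$) for $\calc_\bullet^{(j)}$ holds verbatim (note that if $i_0 < k$ but we happen to have $i_0 < j$ we use the strict condition, and if $i_0 = j$ but $i_0 < k$ we still have $\relint\calc_j \subseteq \wt R^\circ$, which is exactly condition (\ref{eq: conditions-rational-sets-geom}.$j$) in the truncated flag's list since $j$ is its top index — wait, for the truncated flag the top index $j$ uses the \emph{non-strict} condition $\calc_j \subseteq \wt R(1_\base,a\chi^u)$, which is implied by the strict one). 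If instead $i_0 > j$, then in particular $\calc_j \subseteq \calc_{i_0}\subseteq \wt R(1_\base,a\chi^u)$ — but the closure argument of Remark~\ref{rem:ResultsOfConditionsInProp:GeomEvalOfPNorm} gives $\calc_j\subseteq \wt R(1_\base,a\chi^u)$ directly whenever $i_0\leq k$, so condition (\ref{eq: conditions-rational-sets-geom}.$j$) for $\calc_\bullet^{(j)}$, namely $\calc_j\subseteq\wt R(1_\base,a\chi^u)$, is satisfied. In all cases Proposition~\ref{prop:GeomEvalOfPNorm} applied to $P' = P_{\calc_\bullet^{(j)}}$ yields $1_{\kappa(P')}\geq|a\chi^u|_{P'}$, as desired.

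\textbf{Assembling.} Putting this together: given $\wt U\in\wtcalf$, reduce to $\wt U\supseteq \bigcap_{i=1}^n \wt R(1_\base,a_i\chi^{u_i})$ with $1_{\kappa(P)}\geq |a_i\chi^{u_i}|_P$; the single-term step gives $1_{\kappa(P')}\geq|a_i\chi^{u_i}|_{P'}$, hence each $\wt R(1_\base,a_i\chi^{u_i})\in\wtcalf'$; filter property (3) gives their finite intersection in $\wtcalf'$, and property (4) (upward closure) gives $\wt U\in\wtcalf'$. Therefore $\wtcalf\subseteq\wtcalf'$.

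\textbf{Main obstacle.} The routine bookkeeping is fine; the only genuine subtlety is the boundary behavior in Proposition~\ref{prop:GeomEvalOfPNorm} — specifically, that the top index of a flag uses a non-strict containment $\calc_{\mathrm{top}}\subseteq\wt R$ while lower indices use the strict $\relint\calc_i\subseteq\wt R^\circ$. I need to check carefully that when I pass from $\calc_\bullet$ (top index $k$) to $\calc_\bullet^{(j)}$ (top index $j<k$), a condition that was ``strict at level $i$'' for the long flag still implies the appropriate condition for the short flag, including when $i = j$ (where the short flag wants only the non-strict version, which is weaker, so this is fine) and when $i > j$ (where I fall back on $\calc_j\subseteq\wt R$ coming from taking closures, which is exactly the non-strict top-level condition for the truncation). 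Once this case analysis is pinned down, the rest is immediate from the filter axioms established in Theorem~\ref{thm:TheFilterIsAPrimeFilter}.
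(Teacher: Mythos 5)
Your argument is correct and essentially the paper's: both reduce to the single-term statement via the filter axioms and then case-split on the index $i_0$ at which condition (\ref{eq: conditions-rational-sets-geom}.$i_0$) holds, the only difference being that the paper reduces to $j=k-1$ by induction while you handle general $j$ directly (a harmless stylistic variation). Note one slip of the pen in your Key Step: the ``smallest index $i_0$ with $\calc_{i_0}\subseteq\wt{R}(1_\base,a\chi^u)$'' is always $i_0=0$ by Remark~\ref{rem:ResultsOfConditionsInProp:GeomEvalOfPNorm}, and the dichotomy you then claim for it ($\relint\calc_{i_0}\subseteq\wt{R}^\circ$ or $i_0=k$) can fail; what you actually use --- and what Proposition~\ref{prop:GeomEvalOfPNorm} guarantees --- is merely \emph{some} index $i_0$ for which condition (\ref{eq: conditions-rational-sets-geom}.$i_0$) holds, after which your three cases $i_0<j$, $i_0=j$, $i_0>j$ go through exactly as written.
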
\begin{proof}
It suffices to show the result for $j=k-1$; the general result follows by induction. 

Recall that Proposition~\ref{prop:GeomEvalOfPNorm} gives explicit geometric conditions for when $1_{\kappa(P)}\geq|a\chi^u|_P$. 
For convenience, we now state the corresponding conditions in the case where $P$ is replaced by $P'$.
That is, $1_{\kappa(P')}\geq|a\chi^u|_{P'}$ if and only if 
\begin{align}\label{eq: conditions-rational-sets-geom-truncated}
    &\relint\calc_0\subseteq\wt{R}^\circ(1_\base, a \chi^{u}) \text{, or } \nonumber\\
    &\relint\calc_1\subseteq\wt{R}^\circ(1_\base, a \chi^{u})\text{, or}\nonumber\\
    &\qquad\qquad\quad \vdots\\
    &\relint\calc_{k-2}\subseteq\wt{R}^\circ(1_\base, a \chi^{u})\text{, or }\nonumber\\
    &\calc_{k-1}\subseteq\wt{R}(1_\base, a \chi^{u}).\nonumber
\end{align}
As before, we label the conditions in (\ref{eq: conditions-rational-sets-geom-truncated}) as (\ref{eq: conditions-rational-sets-geom-truncated}.0), (\ref{eq: conditions-rational-sets-geom-truncated}.1), $\ldots$, (\ref{eq: conditions-rational-sets-geom-truncated}.$k-2$), and (\ref{eq: conditions-rational-sets-geom-truncated}.$k-1$).

Suppose $\wt{U}\in\wtcalf$, so there are terms $a_1\chi^{u_1},\ldots,a_n\chi^{u_n}\in\base[\Mon]$ such that $\wt{U}\supseteq\dcap_{l=1}^n\wt{R}(1_\base,a_l\chi^{u_l})$ and $1_{\kappa(P)}\geq|a_l\chi^{u_l}|_P$ for $1\leq l\leq n$.
Note that, to get that $\wt{U}\in\wtcalf'$, it suffices to show that $1_{\kappa(P')}\geq|a_l\chi^{u_l}|_{P'}$ for each $1\leq l\leq n$. Fix such an $l$. 
Since $1_{\kappa(P)}\geq|a_l\chi^{u_l}|_P$, there is some $0\leq i_l\leq k$ such that $a_l\chi^{u_l}$ satisfies (\ref{eq: conditions-rational-sets-geom}.$i_l$).

If $i_l=k$ then $\calc_{k-1}\subseteq\calc_k\subseteq\wt{R}(1_\base, a_l \chi^{u_l})$, so $a_l\chi^{u_l}$ satisfies (\ref{eq: conditions-rational-sets-geom-truncated}.$k-1$), giving us $1_{\kappa(P')}\geq|a_l\chi^{u_l}|_{P'}$. If $i_l=k-1$ then, by Remark~\ref{rem:ResultsOfConditionsInProp:GeomEvalOfPNorm}, $\calc_{k-1}\subseteq\wt{R}(1_\base, a_l \chi^{u_l})$, i.e, $a_l\chi^{u_l}$ satisfies (\ref{eq: conditions-rational-sets-geom-truncated}.$k-1$), and so $1_{\kappa(P')}\geq|a_l\chi^{u_l}|_{P'}$ again.

So now suppose $i_l<k-1$. Then (\ref{eq: conditions-rational-sets-geom-truncated}.$i_l$) and (\ref{eq: conditions-rational-sets-geom}.$i_l$) are the same condition, so we get that $1_{\kappa(P')}\geq|a_l\chi^{u_l}|_{P'}$.
\end{proof}

\begin{theorem}\label{thm:TheFilterHasFlagMeaning}
For any prime $P$ on $\base[\Mon]$, let $\wtcalf=\wtcalf_P$ and pick a simplicial flag $\calc_\bullet$ of cones such that $P=P_{\calc_\bullet}$. Then, for any $\Gamma$-admissible cone $\wt{U}$ in $\R_{\geq0}\times N_{\R}$, $\wt{U}\in\wtcalf$ if and only if $\wt{U}$ is a neighborhood of $\calc_\bullet$.

If $P \in \ContBase{\base}{\base[\Mon]}$ then we can write $P=P_{\calp_\bullet}$ for some flag $\calp_\bullet$ of polyhedra and consider $\calf=\calf_P$. For any $\Gamma$-rational polyhedron $U$ in $N_\R$, $U\in \mathcal{F}$ if and only if $U$ is a neighborhood of the flag $\mathcal{P}_\bullet$.
\end{theorem}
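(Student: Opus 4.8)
The plan is to prove the statement about $\wtcalf$ first and then deduce the polyhedral version by the usual cone--over correspondence, exactly as was done at the ends of the two previous proofs. So I fix a simplicial flag $\calc_\bullet = (\calc_0 \leq \cdots \leq \calc_k)$ with $P = P_{\calc_\bullet}$ and a $\Gamma$-admissible cone $\wt{U}$, and I argue both implications.

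For the ``only if'' direction, suppose $\wt{U} \in \wtcalf$. By the reduction lemma I may write $\wt{U} \supseteq \dcap_{l=1}^n \wt{R}(1_\base, a_l\chi^{u_l})$ with $1_{\kappa(P)} \geq |a_l\chi^{u_l}|_P$ for each $l$; call this intersection $\wt{V}$, a $\Gamma$-admissible cone contained in $\wt{U}$. I want to show $\wt{V}$ — hence $\wt{U}$ — meets $\relint\calc_i$ for each $0 \leq i \leq k$, which by the earlier neighborhood lemma is exactly the condition of being a neighborhood of $\calc_\bullet$. The key input is Proposition~\ref{prop:GeomEvalOfPNorm}: for each $l$ there is an index $i_l$ so that condition (\ref{eq: conditions-rational-sets-geom}.$i_l$) holds. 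Set $i_0 = \min_l i_l$. For any index $i$ with $i \le i_0$, Remark~\ref{rem:ResultsOfConditionsInProp:GeomEvalOfPNorm} tells us $\calc_i \subseteq \wt{R}(1_\base, a_l\chi^{u_l})$ for all $l$, hence $\calc_i \subseteq \wt{V}$. This already shows $\wt{V}$ meets $\relint\calc_i$ for $i \le i_0$. For $i > i_0$, I want to exhibit a point of $\relint\calc_i$ in $\wt{V}$. The idea: take $x \in \relint\calc_{i_0}$ with $x \in \wt{V}$ (possible since $\calc_{i_0} \subseteq \wt{V}$), and take a generator $y$ of a ray of $\calc_i$ not in $\calc_{i-1}$ — but I need that enough $a_l\chi^{u_l}$ don't ``push back''. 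Actually the cleaner route is to work facet by facet: since each $a_l\chi^{u_l}$ satisfies (\ref{eq: conditions-rational-sets-geom}.$i_l$) and $\relint\calc_{i_l}\subseteq \wt R^\circ(1_\base,a_l\chi^{u_l})$ is a \emph{strict} half-space containment, a point of $\relint\calc_k$ evaluates strictly negatively under every $a_l\chi^{u_l}$ with $i_l < k$ and non-positively under those with $i_l = k$, because any point of $\relint\calc_k$ is a positive combination of generators including one outside $\calc_{i_l}$; hence $\relint\calc_k\subseteq \wt V$, and since $\wt V$ is a cone of full dimension there it meets every $\relint\calc_i$. So $\wt U$ is a neighborhood of $\calc_\bullet$.

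For the ``if'' direction, suppose $\wt{U}$ is a neighborhood of $\calc_\bullet$, i.e.\ $\wt{U}$ meets $\relint\calc_i$ for all $i$. I need terms $a_1\chi^{u_1},\ldots$ with $1_{\kappa(P)}\geq |a_l\chi^{u_l}|_P$ and $\dcap_l \wt R(1_\base, a_l\chi^{u_l})\subseteq \wt U$; equivalently, by Lemma~\ref{lemma:RatSetsAreGammaRat/Ad} and Proposition~\ref{prop:GeomEvalOfPNorm}, I must show the set of $\Gamma$-admissible half-spaces $H = \wt R(1_\base, a\chi^u)$ satisfying one of the conditions (\ref{eq: conditions-rational-sets-geom}) has intersection contained in $\wt U$. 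Here is where I expect the real work: one should show this intersection equals the smallest cone of the form ``stuff in the flag''. Concretely, I'd argue by induction on $k$ using Lemma~\ref{lemma:TruncationAndFilter} and the neighborhood lemma. Write $\wt U$ as an intersection of $\Gamma$-admissible half-spaces $\wt R(1_\base, a_l\chi^{u_l})$ (possibly after negating some $a_l\chi^{u_l}$, as in the proof of Theorem~\ref{thm:TheFilterIsAPrimeFilter}(5)). For each such half-space $H_l$: since $\wt U \subseteq H_l$ and $\wt U$ meets $\relint\calc_0$, we get $\calc_0$ meets $H_l$; more importantly, because $\wt U$ meets $\relint\calc_i$ for every $i$ and $H_l$ is a half-space, there is a largest $i$ — call it $i_l$ — for which $\calc_{i}\subseteq H_l$ fails to be forced; one checks that in fact $\relint\calc_{i_l}\subseteq \wt R^\circ(1_\base, a_l\chi^{u_l})$ (strict side) or $\calc_k\subseteq H_l$. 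The point is that a half-space through which a flag ``escapes'' at stage $i_l$ must contain $\relint\calc_{i_l}$ in its strict interior — otherwise $\relint\calc_{i_l}$ lies on the bounding hyperplane, contradicting that $\wt U$, which meets $\relint\calc_{i_l+1}$, sits inside $H_l$ (the next stratum would have to cross the hyperplane). Hence each $a_l\chi^{u_l}$ satisfies some condition (\ref{eq: conditions-rational-sets-geom}.$i_l$), so $1_{\kappa(P)}\geq |a_l\chi^{u_l}|_P$, and since $\wt U = \dcap_l H_l$ we are done. I expect the bookkeeping around ``which stratum the flag escapes a given half-space at, and why that forces the strict containment'' to be the main obstacle; Lemma~\ref{lemma:InductiveStepInSimplicialReduction}-style arguments about stars of cones at relative-interior points will likely be the right tool.

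Finally, for the polyhedral statement: if $P \in \ContBase{\base}{\base[\Mon]}$ then $P = P_{\calp_\bullet}$ for a flag $\calp_\bullet$ of polyhedra with $c(\calp_\bullet) = \calc_\bullet$. A $\Gamma$-rational polyhedron $U$ lies in $\calf_P$ iff $c(U) \in \wtcalf_P$, and — recalling the remark after Definition~\ref{def: local-equiv} and the neighborhood lemma — $c(U)$ is a neighborhood of $c(\calp_\bullet)$ iff $U$ is a neighborhood of $\calp_\bullet$. Combining with the cone case already proved gives the claim.
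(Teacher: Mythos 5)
Your proof of the polyhedral statement from the cone statement is exactly the paper's route, and your \emph{if} direction (neighborhood $\Rightarrow$ in $\wtcalf$) can be made to work (your ``escape stage'' argument for a half-space $H_l \supseteq \wt U$ does establish that each cutting term satisfies one of the conditions of Proposition~\ref{prop:GeomEvalOfPNorm}), though it is considerably more involved than the paper's approach, which simply picks $w_i\in\wt U\cap\relint\calc_i$, assembles them into a defining matrix for $P$, and reads off $\matt\fraku_l\leq_{\lex}\mbo{0}$ for each constraint $\fraku_l$ of $\wt U$ directly.

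The real problem is the \emph{only if} direction (in $\wtcalf$ $\Rightarrow$ neighborhood). Your key claim there is that $\relint\calc_k\subseteq\wt V$ where $\wt V=\dcap_l\wt R(1_\base,a_l\chi^{u_l})$. That claim is false. The conditions (\ref{eq: conditions-rational-sets-geom}.$i_l$) constrain $a_l\chi^{u_l}$ only on $\calc_{i_l}$ (respectively on all of $\calc_k$ when $i_l=k$); they say nothing about the pairing $\angbra{v_j,\fraku_l}$ for $j>i_l$, and those values can be positive, pushing a point of $\relint\calc_k$ out of $\wt R(1_\base,a_l\chi^{u_l})$. Concretely: take $\Mon=\Z$, $\base=\T$, and the flag $\calc_0=\R_{\geq0}(1,0)\leq\calc_1=\R_{\geq0}(1,0)+\R_{\geq0}(0,1)$ in $\R_{\geq0}\times\R$, with $P=P_{\calc_\bullet}$. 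The single term $a_1\chi^{u_1}=t^{-1}\chi^1$ satisfies $1_{\kappa(P)}\geq|t^{-1}\chi^1|_P$ via condition (\ref{eq: conditions-rational-sets-geom}.$0$), since $\relint\calc_0=\{(r,0):r>0\}$ lies in $\wt R^{\circ}(1_\base,t^{-1}\chi^1)=\{(r,\xi):\xi<r\}$. But the point $(1,2)\in\relint\calc_1$ satisfies $\xi=2>1=r$, so $\relint\calc_1\not\subseteq\wt R(1_\base,t^{-1}\chi^1)=\wt V$. (The theorem's conclusion still holds here --- $\wt V$ is a neighborhood of $\calc_\bullet$, witnessed e.g.\ by $(2,1)\in\wt V\cap(\calc_1\sdrop\calc_0)$ --- but not via your containment claim.)

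What is missing is the actual content of the paper's argument: an induction on $k$ using Lemma~\ref{lemma:TruncationAndFilter} to reduce to the truncated flag, followed by a perturbation step. After disposing of any constraints with $i_l=k$, one picks $w_l\in\relint\calc_{i_l}\cap\wt V$ for each remaining $l$, fixes $v\in\calc_k\sdrop\calc_{k-1}$, observes that each $w_l$ lies in the \emph{interior} of the half-space $\wt R(1_\base,a_l\chi^{u_l})$ and therefore tolerates a small push $+\eps_l v$, and then shows that $\omega=\eps v+\dsum_l w_l$ (with $\eps=\min_l\eps_l$) lies in $\wt V\cap(\calc_k\sdrop\calc_{k-1})$. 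You gestured towards ``take $x\in\relint\calc_{i_0}$ and a generator $y$ of $\calc_i$ not in $\calc_{i-1}$'' before abandoning it, but that was the right direction; the issue is precisely that you need to sum the perturbations against all the half-spaces simultaneously and use linearity of the half-spaces, rather than argue a single inclusion $\relint\calc_k\subseteq\wt V$ that does not hold.
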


\begin{proof}
We first consider $\wtcalf$ and $\calc_\bullet$. 
Suppose that $\wt{U}$ is a neighborhood of $\calc_\bullet$, so for $0\leq i\leq k$ we can pick a point $ w_i\in\wt{U}\cap\relint(\calc_i)$. The matrix $\matt=\begin{pmatrix} w_0\\\vdots\\ w_k\end{pmatrix}$ defines a homomorphism $\ph:\base[\Mon]\to\RnLexSemifield{k+1}$ and, by Definition~\ref{def:PrimeOfAFlag}, $P=\ker\ph$. Thus, for any $f,g\in\base[\Mon]$, $|f|_P\leq|g|_P$ if and only if $\ph(f)\leq \ph(g)$. To get that $\wt{U}\in\wtcalf$, we need to show that there are terms $a_1\chi^{u_1},\ldots,a_n\chi^{u_n}\in\base[\Mon]$ such that $\wt{U}\supseteq\wt{R}(1_\base,  a_1\chi^{u_1}, \ldots, a_n\chi^{u_n}) = \bigcap_{l=1}^n \wt{R}(1_\base, a_l\chi^{u_l})$ and $1_{\kappa(P)}\geq|a_l\chi^{u_l}|_P$.

Since $\wt{U}$ is $\Gamma$-admissible, there are $\fraku_1=(\alpha_1,u_1),\ldots,\fraku_n=(\alpha_n,u_n)\in\Gamma\times\Mon$ such that $\wt{U}=\{\fraku_1,\ldots,\fraku_n\}^\vee=\dcap_{l=1}^n\wt{R}(1_\base,a_l\chi^{u_l})$, where $a_l=t^{\alpha_l}\in\base^\times$. Since $\fraku_l$ is the exponent vector of $a_l\chi^{u_l}$, we have that $1_{\kappa(P)}\geq|a_l\chi^{u_l}|_P$ if and only if 
$$\begin{pmatrix} 0 \\ \vdots \\ 0\end{pmatrix}\geq_{lex}\matt\fraku_l=\begin{pmatrix} \angbra{ w_0,\fraku_l} \\ \vdots \\ \angbra{ w_k,\fraku_l}\end{pmatrix}.$$
Since each $ w_i\in\wt{U}=\{\fraku_1,\ldots,\fraku_n\}^\vee$, we have that each $\angbra{ w_i,\fraku_l}$ is non-positive, so we get that $1_{\kappa(P)}\geq|a_l\chi^{u_l}|_P$. Thus, $\wt{U}\in\wtcalf$.

\newcommand{\PaulBro}{\mathfrak{P}}
\newcommand{\paulbro}{\PaulBro}
For the other direction, suppose $\wt{U}\in\wtcalf$.  We proceed to prove that $\wt{U}$ is a neighborhood of $\calc_\bullet=(\calc_0\leq\cdots\leq\calc_k)$ by induction on $k$. For the base case of $k=0$, applying Proposition~\ref{prop:GeomEvalOfPNorm} shows that $\wt{U}\in\wtcalf$ gives us that there are  $a_1\chi^{u_1},\ldots,a_n\chi^{u_n}\in\base[\Mon]$ such that $\wt{U}\supseteq\dcap_{l=1}^n\wt{R}(1_\base,a_l\chi^{u_l})\supseteq\calc_0$, so $\wt{U}$ is a neighborhood of $\calc_\bullet=(\calc_0)$. For the inductive step, suppose that $k\geq1$ and the statement is true for $k-1$. 
Fix $a_1\chi^{u_1},\ldots,a_n\chi^{u_n}\in\base[\Mon]$ such that $\wt{U}\supseteq\dcap_{l=1}^n\wt{R}(1_\base,a_l\chi^{u_l})$ and $1_{\kappa(P)}\geq|a_l\chi^{u_l}|_P$. In particular, $\PaulBro:=\dcap_{l=1}^n\wt{R}(1_\base,a_l\chi^{u_l})\in\wtcalf$ and it suffices to show that $\PaulBro$ is a neighborhood of $\calc_\bullet$.
Consider the truncation $\calc_\bullet^{(k-1)}$ of $\calc_\bullet$, let $P'=P_{\calc_\bullet^{(k-1)}}$, and let $\wtcalf'=\wtcalf_{P'}$. Lemma~\ref{lemma:TruncationAndFilter} tells us that 
$\PaulBro\in\wtcalf'$
and so, by the inductive hypothesis, 
$\PaulBro$
is a neighborhood of $\calc_\bullet^{(k-1)}$. Since 
$\PaulBro$
is a neighborhood of $\calc_\bullet$ if and only if it is a neighborhood of $\calc_\bullet^{k-1}$ and meets $\calc_k\sdrop\calc_{k-1}$, it now suffices to show that 
$\PaulBro$
meets $\calc_k\sdrop\calc_{k-1}$.

\newcommand{\aletter}{j}
For each $1\leq l\leq n$, $1_{\kappa(P)}\geq|a_l\chi^{u_l}|_P$, so $a_l\chi^{u_l}$ satisfies (\ref{eq: conditions-rational-sets-geom}.$i_l$) for some $0\leq i_l\leq k$. If $i_l=k$ then $\calc_k\subseteq\wt{R}(1_\base,a_l\chi^{u_l})$, so 
$$
\PaulBro\cap(\calc_k\sdrop\calc_{k-1})
=\left(
\dcap_{\substack{
1\leq\aletter\leq n\\
\aletter\neq l}}
\wt{R}(1_\base,a_{\aletter}\chi^{u_{\aletter}})\right)\cap(\calc_k\sdrop\calc_{k-1}).
$$

Hence, 
it suffices to show that $\dcap_{\substack{1\leq\aletter\leq n\\\aletter\neq l}}\wt{R}(1_\base,a_{\aletter}\chi^{u_{\aletter}})$ meets $\calc_k\sdrop\calc_{k-1}$. Thus, we may assume without loss of generality that $0\leq i_l\leq k-1$ for all $l$, so $\relint\calc_{i_l}\subseteq\wt{R}^{\circ}(1_\base,a_l\chi^{u_l})$.

Since $\PaulBro$ is a neighborhood of $\calc_\bullet^{(k-1)}$ and $0\leq i_l\leq k-1$, we can pick a point $ w_l\in\relint(\calc_{i_l})\cap\PaulBro$, so, in particular, $ w_l\in\relint(\calc_{i_l})\subseteq\wt{R}^\circ(1_\base,a_l\chi^{u_l})$. 
Also, $ w_l\in\calc_k\cap\paulbro$. 
Fix $ v\in\calc_k\sdrop\calc_{k-1}$. Since $ w_l$ is in the interior of $\wt{R}(1_\base,a_l\chi^{u_l})$, there is some $\epsilon_l>0$ such that $ w_l+[0,\eps_l] v\subseteq\wt{R}(1_\base,a_l\chi^{u_l})$. Letting $\epsilon=\min_l \epsilon_l$, we get that $ w_l+\epsilon v\in\wt{R}(1_\base,a_l\chi^{u_l})$ for all $l$. Also, recall that each $ w_l\in\paulbro=\dcap_{\aletter=1}^n\wt{R}(1_\base,a_{\aletter}\chi^{u_{\aletter}})$.

Consider the point $\omega=\epsilon v+\dsum_{l=1}^n w_l$. If we fix $l$ then we can write $\omega=( w_l+\epsilon v)+\dsum_{\aletter\neq l} w_{\aletter}$ with each of $ w_l+\eps v$ and $ w_{\aletter}$ in the linear half-space $\wt{R}(1_\base,a_l\chi^{u_l})$, so $\omega\in\wt{R}(1_\base,a_l\chi^{u_l})$. Since this is true for all $l$, we conclude that $\omega\in\dcap_{l=1}^n\wt{R}(1_\base,a_l\chi^{u_l})=\paulbro$. Since $\dsum_{l=1}^n w_l\in\calc_k$ and $\eps v\in\calc_k\sdrop\calc_{k-1}$, we also have $\omega\in\calc_k\sdrop\calc_{k-1}$. So $\paulbro$ meets $\calc_k\sdrop\calc_{k-1}$, concluding the proof that $\wt{U}\in\wtcalf$ if and only if $\wt{U}$ is a neighborhood of $\calc_\bullet$.

Now suppose $P\in\ContBase{\base}\base[\Mon]$ and consider $\calp_\bullet$ and $\calf$ as in the statement of the theorem. Letting $\calc_\bullet=c(\calp_\bullet)$, we have $P=P_{\calc_{\bullet}}$. So, using the earlier part of the theorem, we have
\begin{align*}
U\in\calf&\iff c(U)\in\wtcalf\\
&\iff c(U)\text{ is a neighborhood of }\calc_\bullet=c(\calp_\bullet)\\
&\iff U\text{ is a neighborhood of }\calp_\bullet.
\end{align*}
\end{proof}

\begin{coro}\label{coro:FilterEqualIsFlagEquiv}
Consider simplicial flags $\calc_\bullet$ and $\calc_\bullet'$ of cones and let $P=P_{\calc_\bullet}$ and $P'=P_{\calc_\bullet'}$. Then $\wtcalf_{P}=\wtcalf_{P'}$ if and only if $\calc_\bullet$ and $\calc_\bullet'$ are locally equivalent.

If $P=P_{\calp_\bullet}$ and $P'=P_{\calp_\bullet'}$ for some flags $\calp_\bullet$ and $\calp_\bullet'$ of polyhedra, then $\calf_P=\calf_{P'}$ if and only if $\calp_\bullet$ and $\calp_\bullet'$ are locally equivalent.
\end{coro}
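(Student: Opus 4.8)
The plan is to deduce this corollary directly from Theorem~\ref{thm:TheFilterHasFlagMeaning} together with Corollary~\ref{coro:FilterDeterminedByCones/Polyhedra}, so that essentially no new argument is needed beyond unwinding definitions. First I would treat the cone case. Fix simplicial flags $\calc_\bullet$ and $\calc_\bullet'$ and set $P = P_{\calc_\bullet}$, $P' = P_{\calc_\bullet'}$. By Corollary~\ref{coro:FilterDeterminedByCones/Polyhedra}, each of $\wtcalf_P$ and $\wtcalf_{P'}$ is determined by the collection of $\Gamma$-admissible cones it contains; hence $\wtcalf_P = \wtcalf_{P'}$ if and only if $\wtcalf_P$ and $\wtcalf_{P'}$ contain exactly the same $\Gamma$-admissible cones.

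Next I would apply Theorem~\ref{thm:TheFilterHasFlagMeaning}, which says that a $\Gamma$-admissible cone $\wt U$ lies in $\wtcalf_P$ precisely when $\wt U$ is a $\Gamma$-admissible neighborhood of $\calc_\bullet$, and likewise $\wt U \in \wtcalf_{P'}$ precisely when $\wt U$ is a neighborhood of $\calc_\bullet'$. Combining this with the previous paragraph, $\wtcalf_P = \wtcalf_{P'}$ if and only if $\calc_\bullet$ and $\calc_\bullet'$ have exactly the same $\Gamma$-admissible neighborhoods, which by Definition~\ref{def: local-equiv} is exactly the assertion that $\calc_\bullet$ and $\calc_\bullet'$ are ($\Gamma$-)locally equivalent. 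This completes the cone case.

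For the polyhedral case, I would run the identical argument with $\calf_P$, $\calf_{P'}$, $\Gamma$-rational polyhedra in place of $\Gamma$-admissible cones, and the polyhedral clauses of Corollary~\ref{coro:FilterDeterminedByCones/Polyhedra} and Theorem~\ref{thm:TheFilterHasFlagMeaning}: $\calf_P$ is determined by which $\Gamma$-rational polyhedra it contains, and $U \in \calf_P$ iff $U$ is a neighborhood of $\calp_\bullet$, so $\calf_P = \calf_{P'}$ iff $\calp_\bullet$ and $\calp_\bullet'$ have the same $\Gamma$-rational neighborhoods, i.e. are locally equivalent. (Alternatively, one can pass through the correspondence $U \mapsto c(U)$ and the fact that $\calp_\bullet$, $\calp_\bullet'$ are locally equivalent iff $c(\calp_\bullet)$, $c(\calp_\bullet')$ are, reducing the polyhedral case to the cone case; I would pick whichever phrasing is shorter given what has already been set up.)

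There is no substantive obstacle here: the content has all been done in Theorem~\ref{thm:TheFilterHasFlagMeaning} and Corollary~\ref{coro:FilterDeterminedByCones/Polyhedra}, and the only care needed is to make sure that ``determined by which $\Gamma$-admissible cones/polyhedra it contains'' is invoked so that equality of the filters is reduced to equality on cones/polyhedra before the neighborhood characterization is applied. The mildest point to be careful about is that Theorem~\ref{thm:TheFilterHasFlagMeaning} requires a simplicial flag in the cone case (hence the hypothesis in the statement) and a flag of polyhedra $\calp_\bullet$ with $c(\calp_\bullet)$ simplicial in the polyhedral case (which is exactly the hypothesis $P = P_{\calp_\bullet}$, $P' = P_{\calp_\bullet'}$); I would note this so the reader sees the hypotheses are precisely what makes the cited results applicable.
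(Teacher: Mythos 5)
Your proposal matches the paper's proof exactly: the paper states the corollary is immediate from Corollary~\ref{coro:FilterDeterminedByCones/Polyhedra} and Theorem~\ref{thm:TheFilterHasFlagMeaning}, and your write-up simply spells out that unwinding. The argument is correct and complete.
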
\begin{proof}
This is immediate from Corollary~\ref{coro:FilterDeterminedByCones/Polyhedra} and Theorem~\ref{thm:TheFilterHasFlagMeaning}.
\end{proof}

Theorem~\ref{thm:TheFilterHasFlagMeaning} allows us to consider examples of primes in $\ContBase{\base}\base[\Mon]$ and understand their filters via flags $\calp_\bullet$ of polyhedra.

\begin{example} Let $\base = \Qmax$, $\Gamma = \Q$, and $\Mon = \Z^2$.

Let $C=\begin{pmatrix}
1&0&0
\end{pmatrix}$,
$C'=\begin{pmatrix}
1&\sqrt{2}&0
\end{pmatrix}$, and 
$C''=\begin{pmatrix}
1&\sqrt{2}&\sqrt{3}
\end{pmatrix}$, and let 
$\calp_\bullet$,
$\calp_\bullet'$, and 
$\calp_\bullet''$ 
be the corresponding flags of polyhedra in $N_{\R}=\R^2$. Let $P,P',$ and $P''$ be the corresponding prime congruences in $\ContBase{\Qmax}\Qmax[x^{\pm1},y^{\pm1}]$, and let $\calf,\calf'$, and $\calf''$ be the corresponding prime filters on the \BigLarry. Each of 
$\calp_\bullet$,
$\calp_\bullet'$, and 
$\calp_\bullet''$ consist of a single point whose coordinates are given by the second and third entries of the matrix. However, the corresponding filters look different. 

The filter $\calf$ consists of the zero-dimensional polyhedron $\{(0,0)\}$ and every rational polyhedral set containing it. On the other hand, $\calf'$ contains no zero-dimensional polyhedron: any rational polyhedron in $\R^2$ containing $(\sqrt{2},0)$ must contain an open interval on the $x$-axis. Similarly, $\calf''$ consists only of two-dimensional polyhedral sets. \exEnd\end{example}

\begin{example}\label{ex:Whale/Dolphin}

Still in the case where $\base = \Qmax$ and $\Mon = \Z^2$, we
now consider the matrices $C= \begin{pmatrix} 1  &\sqrt{2} &0\\ 0 & 1 &\sqrt{3}\end{pmatrix}$ and $C' = \begin{pmatrix} 1  &\sqrt{2} &0\\ 0 & 0 & 1\end{pmatrix}$ and let $\calp_\bullet$ and $\calp_\bullet'$ be the corresponding flags of polyhedra, which are depicted in Figure \ref{fig: whale-dolphin-1}. Then the corresponding filters $\calf$ and $\calf'$ are the same: they both consist of those rational polyhedral sets which contain $(r_1,r_2)\times[0,r_3)$ for some $r_1,r_2,r_3\in\R$ with $r_1<\sqrt{2}<r_2$ and $r_3>0$. One can see this visually depicted in Figures \ref{fig: whale-dolphin-2} and \ref{fig: whale-dolphin-3}. We will show in Corollary~\ref{coro:FlagsGiveSamePrimeIffLocallyEquiv} that this implies that $C$ and $C'$ define the same point $P\in\ContBase{\Qmax}\Qmax[x^{\pm1},y^{\pm1}]$. One can also check this by hand, by verifying that they give the same preorder on $\Terms{\Qmax[x^{\pm1},y^{\pm1}]}$, even though neither of $C,C'$ can be obtained from the other by downward gaussian elimination as in Lemma~\ref{lemma:rref}. \exEnd\end{example}

\begin{figure}[ht]
    \includegraphics[scale=1.48]{./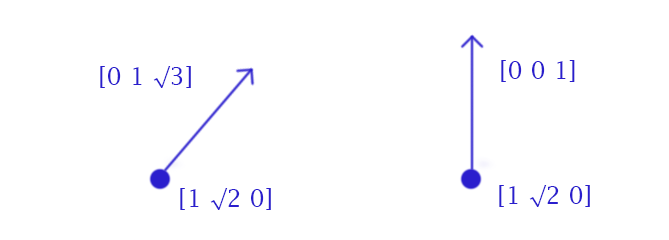}
    \caption{The flags $\calp_\bullet$ and $\calp_\bullet'$ of Example~\ref{ex:Whale/Dolphin}.}
    \label{fig: whale-dolphin-1}
\end{figure}
    
\begin{figure}[ht]
    \includegraphics[scale=1.48]{./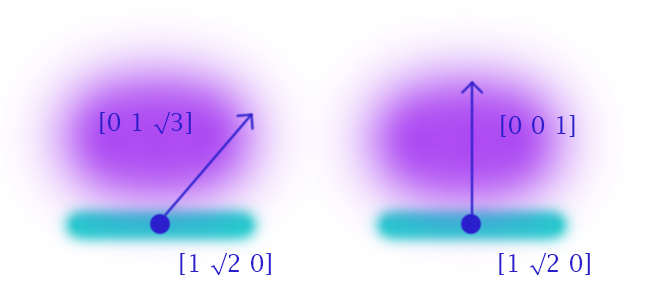} 
    \caption{This picture illustrates the filters corresponding to $\calp$ and $\calp'$ by drawing the ``infinitesimal neighborhoods'' that a rational polyhedron must contain to be a neighborhood of the flags. The turquoise represents the condition to be a neighborhood of the truncation $\calp_\bullet^{(0)}=(\calp_\bullet')^{(0)}$; the purple represents the added conditions to be neighborhoods of $\calp_\bullet$ and $\calp_\bullet'$.} 
    \label{fig: whale-dolphin-2}
\end{figure}

\begin{figure}[ht]
    \includegraphics[scale=1.48]{./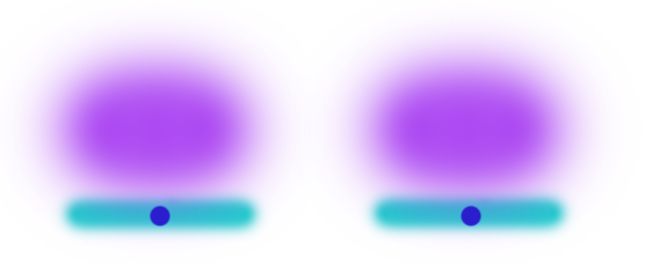}
    \caption{We see that the filters are equal, so the flags $\calp_\bullet$ and $\calp_\bullet'$ are locally equivalent.} 
    \label{fig: whale-dolphin-3}
\end{figure}

\begin{example}\label{ex:JellyfishAndFriends}
Consider the prime congruences on $\Qmax[x^{\pm1},y^{\pm1}]$ given by the matrices $C_1 = \begin{pmatrix} 1  &\sqrt{2} &\sqrt{3}\end{pmatrix}$,
$C_2 = \begin{pmatrix} 1  &0 &0\\ 0 & 1 &0\end{pmatrix}$, 
$C_3 = \begin{pmatrix} 1  &0 &0\\ 0 & 1 &\sqrt{2}\end{pmatrix}$, and 
$C_4 = \begin{pmatrix} 1  &0 &0\\ 0 & 1 &0\\ 0 & 0 &1\end{pmatrix}$. The corresponding flags of polyhedra and filters are pictured in Figure~\ref{fig: mix-george-points}.

\begin{figure}[ht]
    \includegraphics[scale=1.48]{./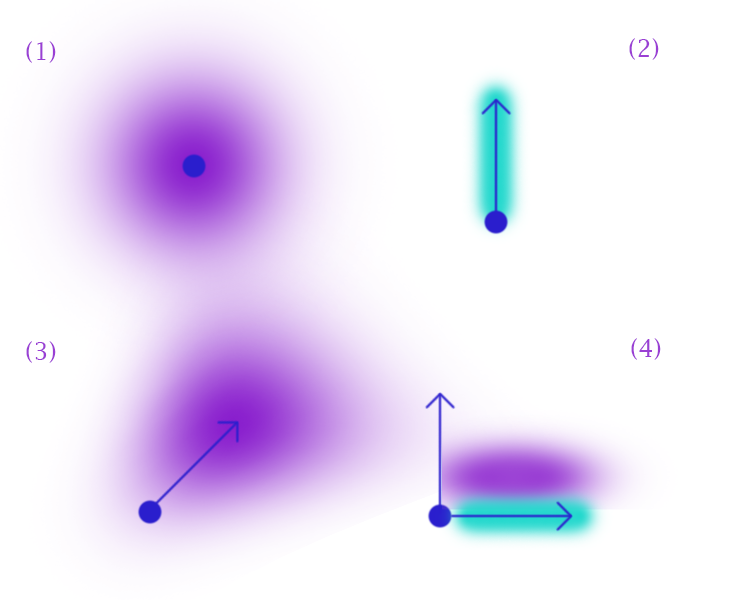}
    \caption{The flags of polyhedra and filters from Example~\ref{ex:JellyfishAndFriends}. In these pictures, turquoise represents ``one-dimensional infinitesimal neighborhoods'' and purple represents ``two-dimensional infinitesimal neighborhoods''. } 
    \label{fig: mix-george-points}
\end{figure} \exEnd\end{example}

\begin{remark}
We can formalize the notion of the dimension of the ``infinitesimal neighborhoods'' in the figures above by considering the smallest possible dimension of a polyhedron in $\calf_P$. See Proposition~\ref{prop: min-dim} for a way to compute this dimension as an algebraic invariant of $P$.
\end{remark}

\subsection{From filters back to congruences.}
We show that the points of $\ContBase{\base}{\base[\Mon]}$ are in bijection with prime filters on the lattice of $\Gamma$-rational polyhedral sets, where the filter contains some polytope. 

\begin{remark}\label{remark:FilterWithNoContPrime}
One may hope that there is a geometrically meaningful bijection between the points of $\ContBase{\base}{\base[\Mon]}$ and prime filters on the lattice of $\Gamma$-rational polyhedral sets. To see why this is not true, consider the following example: 
    \begin{align*}
        &N_\mathbb{R} = \R^2, \text{ and } \sigma = \{ (0, x)\in \R^2 : x\geq 0\} \\
        &\mathcal{F} = \{ U : \Gamma\text{-rational polyhedral sets, with } U \supseteq \sigma+(0,y) \text{ for some } y\in \R.\}
    \end{align*}
One can verify that $\mathcal{F}$ is a prime filter, however, it can only correspond to some point at infinity on the positive $y$-axis, and there is no such point in $\ContBase{\base}{\base[\Mon]}$.  
\end{remark}

The following lemma ensures that the prime filter $\calF$ of Remark~\ref{remark:FilterWithNoContPrime} does not occur as $\calf_P$ for any $P\in\ContBase{\base}\base[\Mon]$.

\begin{lemma}\label{lemma:FilterHasPolytope}
For any $P\in\ContBase{\base}\base[\Mon]$, $\calf_P$ contains a polytope.
\end{lemma}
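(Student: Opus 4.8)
The plan is to exhibit an explicit polytope in $\calf_P$ by using the nice matrix form available for primes in $\ContBase{\base}\base[\Mon]$. By Proposition~\ref{coro: matrix_form}, fix a defining matrix for $P$ of the form $\matt=\begin{pmatrix}1&\xi_0\\0&\xi_1\\\vdots&\vdots\\0&\xi_k\end{pmatrix}$ with $\xi_0,\ldots,\xi_k\in N_{\R}$, and let $\calc_\bullet=\calc_\bullet(\matt)$ be the associated simplicial flag of cones, so $P=P_{\calc_\bullet}$ and $\calc_\bullet=c(\calp_\bullet)$ for the corresponding flag $\calp_\bullet$ of polyhedra. By Theorem~\ref{thm:TheFilterHasFlagMeaning}, a $\Gamma$-rational polyhedron $U$ lies in $\calf_P$ exactly when $U$ is a neighborhood of $\calp_\bullet$, i.e.\ (by the characterization lemma) when $\calp_0\in U$ and $U$ meets $\calp_i\sdrop\calp_{i-1}$ for $1\le i\le k$. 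So it suffices to produce a $\Gamma$-rational \emph{polytope} with this property.

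The key point is that $\calp_\bullet$ is a bounded, in fact a single point together with larger polyhedra all of whose relevant ``new'' directions near $\calp_0$ can be captured in a bounded region: concretely, the relative interior of each $\calc_i$ contains points arbitrarily close to the ray $\calc_0$ (scale down), so near the point $\calp_0=\xi_0\in N_{\R}$ we can find points of $\relint\calp_i$ for every $i$ inside any fixed ball. First I would choose, for each $1\le i\le k$, a point $y_i\in\relint\calp_i$; after translating by $\xi_0$ and rescaling (using that $\relint\calc_i$ is a cone and meets every neighborhood of $\calc_0\sdrop\{0\}$), I may assume all the $y_i$ lie in a bounded set, say in the cube $\xi_0+[-1,1]^{n}$ once we fix coordinates $\Mon\cong\Z^n$. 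Then I would take $Q$ to be a $\Gamma$-rational polytope — e.g.\ a product of rational intervals, or the $\Gamma$-rational convex hull of suitable lattice-scaled approximations — that is large enough to contain $\xi_0$ and each $y_i$ in its interior; since $\Gamma$ is dense in $\R$ (as $\base\neq\B$, so $\Gamma\neq 0$, and $\Gamma$ is a nontrivial subgroup of $\R$, hence dense), such a $\Gamma$-rational polytope exists. By construction $Q$ contains $\calp_0$ and meets $\relint\calp_i\subseteq\calp_i\sdrop\calp_{i-1}$ for each $i$, so $Q$ is a neighborhood of $\calp_\bullet$ and therefore $Q\in\calf_P$.

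The main obstacle is the rationality/boundedness bookkeeping: I must ensure the chosen approximating points $y_i$ can simultaneously be placed inside a \emph{single} bounded $\Gamma$-rational polytope, which requires the density of $\Gamma$ in $\R$ and a little care when $\Gamma$ is not all of $\R$ (the $y_i$ themselves need not have $\Gamma$-rational coordinates, but we only need a $\Gamma$-rational polytope whose interior contains them, and interiors of $\Gamma$-rational polytopes are $\Gamma$-rationally ``thick'' precisely because $\Gamma$ is dense). A secondary subtlety is the case $k=0$: then $\calp_\bullet=(\calp_0)$ is a single point $\xi_0\in N_{\R}$, and we just need a $\Gamma$-rational polytope containing $\xi_0$, which again follows from density of $\Gamma$. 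Everything else is routine once the characterization of $\calf_P$ via neighborhoods of $\calp_\bullet$ from Theorem~\ref{thm:TheFilterHasFlagMeaning} is invoked.
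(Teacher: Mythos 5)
Your proof is correct and follows essentially the same strategy as the paper's: invoke Theorem~\ref{thm:TheFilterHasFlagMeaning} to reduce to producing a $\Gamma$-rational polytope that is a neighborhood of a flag $\calp_\bullet$ with $P=P_{\calp_\bullet}$. The paper's argument is shorter, though: it simply takes any full-dimensional $\Gamma$-rational polytope $U$ with the point $\calp_0$ in its interior, and notes that such a $U$ automatically meets $\relint\calp_i$ for every $i$ (because $\calp_0\in\calp_i$ lies in the closure of $\relint\calp_i$, and $U$ is a full neighborhood of $\calp_0$), sidestepping your extra step of first choosing points $y_i\in\relint\calp_i$ near $\xi_0$ and then fitting a $\Gamma$-rational polytope around them.
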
\begin{proof}
Write $P=P_{\calP_\bullet}$ for some flag $\calp_\bullet$ of polyhedra. Let $U$ be any full-dimensional $\Gamma$-rational polytope with the point $\calp_0$ in its interior. Then $U$ meets the relative interior of any polyhedron containing $\calp_0$,
so $U$ is a neighborhood of $\calp_\bullet$. By Theorem~\ref{thm:TheFilterHasFlagMeaning}, $U\in\calf_P$.
\end{proof}

\begin{defi}
A \textit{$\Gamma$-rational polytopal set} is a finite union of $\Gamma$-rational polytopes.
\end{defi}

The set of $\Gamma$-rational polytopal sets in $N_{\R}$ forms a lattice when ordered by inclusion. In this lattice, meet and join are given by intersection and union, respectively.

\begin{prop}\label{prop:BigAndLittleLarry}
There is a bijection between the set of prime filters on the lattice of $\Gamma$-rational polyhedral sets, where the filter contains some polytope and the set of prime filters on the lattice of $\Gamma$-rational polytopal sets.
\end{prop}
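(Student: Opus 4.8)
The plan is to exhibit the bijection explicitly via intersection with a large polytope, using the contraction/restriction maps between the two lattices. First I would set up the two maps. Given a prime filter $\calf$ on the $\Gamma$-rational polyhedral sets that contains a polytope $U_0$, define a subset $\calf'$ of the lattice of $\Gamma$-rational polytopal sets by $\calf' = \{\, V \text{ a } \Gamma\text{-rational polytopal set} : V \in \calf\,\}$; equivalently $\calf' = \calf \cap \{\Gamma\text{-rational polytopal sets}\}$. In the other direction, given a prime filter $\calg$ on the lattice of $\Gamma$-rational polytopal sets, define $\calg^{\uparrow}$ to be the upward closure of $\calg$ inside the lattice of $\Gamma$-rational polyhedral sets, i.e. the set of $\Gamma$-rational polyhedral sets $W$ such that $W \supseteq V$ for some $V \in \calg$.

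The key steps, in order: (1) Check $\calf'$ is a prime filter on the polytopal lattice — it is nonempty (it contains $U_0$, hence contains all $\Gamma$-rational polytopes containing $U_0$, and in fact contains $U_0 \cap U$ for any $\Gamma$-rational polytope $U$ that is itself in $\calf$; one must note $U_0 \cap U$ is a polytopal set, and use that $\calf$ is a filter closed under finite meets), it is proper (the empty set is not in $\calf$), it is upward-closed within the polytopal lattice (clear), closed under finite intersection (clear, since $\calf$ is), and prime (if $V_1 \cup V_2 \in \calf'$ with $V_i$ polytopal, then since $V_1 \cup V_2 \in \calf$ and $\calf$ is a prime filter on the larger lattice, some $V_i \in \calf$, hence in $\calf'$). (2) Check $\calg^{\uparrow}$ is a prime filter on the polyhedral lattice containing a polytope — it contains every element of $\calg$, which are polytopes, so it contains a polytope; it is proper since $\emptyset \notin \calg$ and $\emptyset$ contains no nonempty set of $\calg$; upward-closed by construction; closed under finite meet because if $W_1 \supseteq V_1$ and $W_2 \supseteq V_2$ with $V_i \in \calg$ then $W_1 \cap W_2 \supseteq V_1 \cap V_2 \in \calg$; and prime because if $W_1 \cup W_2 \in \calg^{\uparrow}$ then $W_1 \cup W_2 \supseteq V$ for some $V \in \calg$, so $V = (V \cap W_1) \cup (V \cap W_2)$ is a union of two $\Gamma$-rational polytopal sets in $\calg$ (each $V \cap W_i$ is a $\Gamma$-rational polytopal set because $V$ is a polytopal set), hence by primeness of $\calg$ some $V \cap W_i \in \calg$, giving $W_i \in \calg^{\uparrow}$. (3) Check the two constructions are mutually inverse: $(\calg^{\uparrow})' = \calg$ is immediate from upward closure (a polytopal set lies in $\calg^{\uparrow}$ iff it contains a member of $\calg$, but then it is itself a polytopal set so primeness/filter properties pin it down — more carefully, $V \supseteq V' \in \calg$ with both polytopal forces $V \in \calg$ since $V = V \cup V'$... actually just use upward-closure of $\calg$ within its own lattice). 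Conversely $(\calf')^{\uparrow} = \calf$: the inclusion $\subseteq$ is clear; for $\supseteq$, given $W \in \calf$ a $\Gamma$-rational polyhedral set, I need a polytopal set $V \in \calf'$ with $V \subseteq W$, and the natural candidate is $V = W \cap U_0$, which is a $\Gamma$-rational polytopal set (intersection of a polyhedral set with a polytope), lies in $\calf$ since $\calf$ is closed under meet and $W, U_0 \in \calf$, hence $V \in \calf'$, and $V \subseteq W$.

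I expect step (3), specifically the verification that $W \cap U_0$ is genuinely a $\Gamma$-rational \emph{polytopal} set (a finite union of $\Gamma$-rational polytopes) rather than merely a bounded $\Gamma$-rational polyhedral set, to be the main point requiring care: one writes $W = \bigcup_i \calq_i$ with each $\calq_i$ a $\Gamma$-rational polyhedron and $U_0 = \bigcup_j \calp_j$ with each $\calp_j$ a $\Gamma$-rational polytope, so $W \cap U_0 = \bigcup_{i,j} (\calq_i \cap \calp_j)$, and each $\calq_i \cap \calp_j$ is a $\Gamma$-rational polyhedron that is bounded (being contained in the polytope $\calp_j$), hence is a $\Gamma$-rational polytope. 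Everything else is a routine unwinding of the filter axioms; the only mild subtlety beyond this is making sure "prime filter on the lattice" is read with the finite-join version of primeness as used in Theorem~\ref{thm:TheFilterIsAPrimeFilter}, and that the empty polytopal set is excluded so that properness transfers correctly in both directions.
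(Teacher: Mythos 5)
Your proposal is correct and uses essentially the same two maps as the paper (restriction to polytopal sets in one direction, upward closure in the other), with the paper's proof being slightly more compressed on the mutual-inverse verification that you spell out in detail. One small slip: in step (2) you say $\calg^\uparrow$ contains a polytope because ``it contains every element of $\calg$, which are polytopes,'' but elements of $\calg$ are polytopal \emph{sets} (finite unions of polytopes), not polytopes; the correct justification is that any nonempty polytopal set in $\calg$ is bounded, hence contained in some $\Gamma$-rational polytope, which then lies in $\calg^\uparrow$ by upward closure.
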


\begin{proof}
Given a prime filter $\F$ on the \LittleLarry, it generates a filter $\F'$ on the \BigLarry\ such that $\calf'$ contains a polytope, and a polytopal set is in $\calf$ if and only if it is in $\calf'$. Moreover, $\F'$ is also prime. Indeed, say $U = U_1 \cup U_2 \in \F'$ with $U_1$ and $U_2$ both $\Gamma$-rational. Letting $V\in\calf'$ be a polytope we have $U\cap V = (U_1 \cap V) \cup (U_2 \cap V)$ with $U\cap V$, $U_1 \cap V$, and $U_2 \cap V$ polytopal and $U\cap V\in\calf$. So either, $U_1 \cap V \in \F$ or $U_2 \cap V \in \F$, giving us that $U_1 \in \F'$ or $U_2 \in \F'$. 

\newcommand{\FilterRest}[1]{#1|_{\square}}
\newcommand{\calgrest}{\FilterRest{\calg}}
We will show that the above map $\calF\mapsto\calF'$ is a bijection by showing that its inverse is given by $\calg\mapsto \calgrest$ where, given a prime filter $\calg$ on the \BigLarry\ such that $\calg$ contains a polytope, $\calgrest$ denotes the restriction of $\calg$ to the \LittleLarry. We first note that, for any filter $\calf$ on the \LittleLarry, we have $\FilterRest{(\calf')}=\calf$ and, similarly, for any filter $\calg$ on the \BigLarry\  such that $\calg$ contains a polytope, we have $(\calgrest)'=\calg$.

It remains only to show that if $\calg$ is a prime filter on the \BigLarry\ such that $\calg$ contains a polytope, then the restriction of $\calg$ to the \LittleLarry\ is a prime filter $\F=\calgrest$. We check that each of the five conditions for a prime filter is satisfied.

(1) Since every $\Gamma$-rational polyhedron contains a $\Gamma$-rational polytope and $\calg$ is not all of the \BigLarry, $\calf$ is not all of the \LittleLarry.

(2) By hypothesis, $\calf$ is nonempty.

(3) If $U_1, U_2 \in \F$, then $U_1, U_2 \in \calg$ and $U_1, U_2$ are polytopal, so $U_1 \cap U_2 \in \calg$ and $U_1 \cap U_2$ is polytopal, implying that $U_1 \cap U_2 \in \F$. 

(4) If $U \in \F$ and $V$ is a $\Gamma$-rational polytopal set which contains $U$, then $V \supseteq U \in \calg$, implying that $V \in \calg$. Since $V$ is polytopal this gives $V \in \F$.

(5) If $U=\cup_{i=1}^m U_i\in\calf$ and each $U_i$ is a $\Gamma$-rational polytopal set, then $U \in \calg$ with each $U_i$ a $\Gamma$-rational polyhedral set, so some $U_{i_0}\in\calg$. Since $U_{i_0}$ is polytopal, we have $U_{i_0}\in\calf$.
\end{proof}

Given a prime filter $\wtcalf$ on the \ConoidLarry, we define a relation $\leq_{\wtcalf}$ on $\Terms{\base[\Mon]}$ by saying that $a\chi^u\leq_{\wtcalf}b\chi^\lambda$ if there is some $\wt{U}\in\wtcalf$ such that $\angbra{w,\begin{pmatrix}\log(a)\\u\end{pmatrix}}\leq\angbra{w,\begin{pmatrix}\log(b)\\\lambda\end{pmatrix}}$ for all $w\in\wt{U}$, i.e., $\wt{U}\subseteq\wt{R}(b\chi^\lambda,a\chi^u)$. Similarly, if $\calf$ is a filter on the \BigLarry, we have the relation $\leq_\calf$ on $\Terms{\base[\Mon]}$ defined by $a\chi^u\leq_\calf b\chi^\lambda$ if there is a $U \in \F$ such that $\log(a) + \left< \xi, u\right> \leq \log(b) + \left< \xi, \lambda\right>$ for all $\xi\in U$, that is, $U \subseteq R(b\chi^\lambda, a\chi^u)$. Note that, if $\wtcalf=\{c(U)\;:\;U\in\calf\}$, then $\leq_{\wtcalf}$ and $\leq_\calf$ are the same.

Since $\wtcalf$ is a filter, we have that $a\chi^u\leq_{\wtcalf}b\chi^\lambda$ if and only $\wt{R}(b\chi^\lambda,a\chi^u)\in\wtcalf$. Similarly, $a\chi^u\leq_\calf b\chi^\lambda$ if and only if $R(b\chi^\lambda,a\chi^u)\in\calf$.

\begin{prop}\label{prop: relationsF-P}
For any prime filter $\wtcalf$ on the \ConoidLarry, $\leq_{\wtcalf}$ is $\leq_P$ for some prime congruence $P$ on $\base[\Mon]$. 
For any prime filter $\F$ on the \BigLarry\ such that $\F$ contains a polytope, the relation $\leq_\F$ is $\leq_P$ for some $P \in \ContBase{\base}{\base[\Mon]}$.
\end{prop}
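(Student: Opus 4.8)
The plan is to apply Proposition~\ref{prop: side_claim}: for the first assertion it suffices to check that $\leq_{\wtcalf}$ is a multiplicative total preorder on $\Terms{\base[\Mon]}$ respecting the order on $\base$, and for the second that $\leq_\calf$ additionally satisfies condition~(4) there, namely that every term strictly dominates some element of $\base^\times$. I would work from the reformulation recorded just before the statement: $m_1\leq_{\wtcalf}m_2$ iff $\wt{R}(m_2,m_1)\in\wtcalf$, and $m_1\leq_\calf m_2$ iff $R(m_2,m_1)\in\calf$. The basic structural input is that, since $\Mon\cong\Z^n$, every term is a unit, so $\wt{R}(m_2,m_1)=\wt{R}(1_\base,m_2^{-1}m_1)$, which is a $\Gamma$-admissible half-space by Lemma~\ref{lemma:RatSetsAreGammaRat/Ad}; likewise $R(m_2,m_1)$ is a $\Gamma$-rational polyhedral set (a half-space, or trivially $N_{\R}$ or $\emptyset$). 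So these are genuine elements of the lattices on which $\wtcalf$ and $\calf$ live, and the reformulation is meaningful.

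Granting this, the preorder axioms for $\leq_{\wtcalf}$ are immediate. Reflexivity holds because $\wt{R}(m,m)=\R_{\ge 0}\times N_{\R}$ is the top element of the lattice, which lies in any nonempty upward-closed filter. Transitivity follows from $\wt{R}(m_2,m_1)\cap\wt{R}(m_3,m_2)\subseteq\wt{R}(m_3,m_1)$ together with closure under finite meet and upward closure. Multiplicativity is the identity $\wt{R}(m_2m,m_1m)=\wt{R}(m_2,m_1)$, valid for every term $m$ because homogenization sends a product of terms to the sum of their homogenizations. The order on $\base$ is respected because $\log a\le\log b$ forces $\wt{R}(b\chi^0,a\chi^0)=\{(r,x):r\log b\ge r\log a\}$ to equal $\R_{\ge 0}\times N_{\R}$. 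The one axiom requiring primeness of $\wtcalf$ is totality: $\wt{R}(m_2,m_1)\cup\wt{R}(m_1,m_2)=\R_{\ge 0}\times N_{\R}\in\wtcalf$, and since both summands are $\Gamma$-admissible \conoidSets, primeness of the filter puts one of them in $\wtcalf$. By Proposition~\ref{prop: side_claim} this gives the first assertion. The identical computations with $R$ for $\wt{R}$ and $N_{\R}=R(1_\base,0_\base)$ for $\R_{\ge 0}\times N_{\R}$ show $\leq_\calf$ is a multiplicative total preorder respecting the order on $\base$, so $\leq_\calf\,=\,\leq_P$ for some prime $P$.

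It remains to verify condition~(4) of Proposition~\ref{prop: side_claim} for $\leq_\calf$, which is the only place the hypothesis that $\calf$ contains a polytope is used. Fix a term $a\chi^u$ and a polytope $V\in\calf$. The functional $x\mapsto\angbra{x,-u}$ attains a finite maximum $M$ on the nonempty compact set $V$, and since $\Gamma$ is a nontrivial subgroup of $\R$ it is unbounded below, so there is $\gamma\in\Gamma$ with $\gamma<\log(a)-M$. Then $V\subseteq R(a\chi^u,t^\gamma)$, so $R(a\chi^u,t^\gamma)\in\calf$ by upward closure, i.e.\ $t^\gamma\leq_\calf a\chi^u$; whereas $R(t^\gamma,a\chi^u)\cap V=\{x\in V:\angbra{x,-u}\ge\log(a)-\gamma\}=\emptyset$, so $R(t^\gamma,a\chi^u)\notin\calf$ because $\calf$ is a proper filter closed under meet, i.e.\ $a\chi^u\not\leq_\calf t^\gamma$. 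Hence $t^\gamma\prec_\calf a\chi^u$, which is condition~(4), and Proposition~\ref{prop: side_claim} yields $P\in\ContBase{\base}{\base[\Mon]}$. I expect the only non-routine ingredient to be this last boundedness argument; everything else is the bookkeeping identities $\wt{R}(m_2m,m_1m)=\wt{R}(m_2,m_1)$ and $\wt{R}(m_2,m_1)\cup\wt{R}(m_1,m_2)=\R_{\ge 0}\times N_{\R}$, with the one point deserving care being that all the auxiliary half-spaces genuinely lie in the lattices of $\Gamma$-admissible \conoidSets\ and $\Gamma$-rational polyhedral sets — which is exactly where $\Mon\cong\Z^n$ and Lemma~\ref{lemma:RatSetsAreGammaRat/Ad} are needed.
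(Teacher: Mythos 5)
Your proof is correct and follows the paper's approach: verify conditions (1)--(3) of Proposition~\ref{prop: side_claim} for $\leq_{\wtcalf}$, and additionally condition~(4) for $\leq_\calf$, via the reformulation $m_1\leq_{\wtcalf}m_2$ iff $\wt{R}(m_2,m_1)\in\wtcalf$ together with the filter axioms; your compactness argument for condition~(4) is the same as the paper's (you maximize $\angbra{\cdot,-u}$ over a polytope in $\calf$ where the paper minimizes $\log(a)+\angbra{\cdot,u}$), and you helpfully spell out why $a\chi^u\not\leq_\calf t^\gamma$, which the paper leaves tacit. The one genuine difference is your totality argument, which is cleaner: you observe directly that $\wt{R}(m_2,m_1)\cup\wt{R}(m_1,m_2)=\R_{\geq0}\times N_{\R}$ is the top element and hence lies in $\wtcalf$, so primeness immediately puts one of the two half-spaces into $\wtcalf$; the paper reaches the same conclusion by a more roundabout case split on whether some $\wt{U}\in\wtcalf$ is contained in $\wt{R}(a\chi^u,b\chi^\lambda)$, ultimately intersecting a fixed $\wt{U}$ with the same two half-spaces and applying primeness to that decomposition.
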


\begin{proof}
We begin by verifying that $\leq_{\wtcalf}$ satisfies conditions (1), (2), and (3) in Proposition~\ref{prop: side_claim}.

(1) ($\leq_{\wtcalf}$ is a total preorder) The relation is clearly reflexive. We need to show transitivity. Suppose $a\chi^u \leq_{\wtcalf} b\chi^\lambda$ and $b\chi^\lambda \leq_{\wtcalf} c\chi^\mu$, so there are $\wt{U}_1, \wt{U}_2 \in \wt{\F}$ such that 
\begin{align*}
    &\angbra{w,\begin{pmatrix}\log(a)\\u\end{pmatrix}}\leq\angbra{w,\begin{pmatrix}\log(b)\\\lambda\end{pmatrix}} \text{ for all } w\in \wt{U}_1 \text{ and } \\
    &\angbra{w,\begin{pmatrix}\log(b)\\\lambda\end{pmatrix}}\leq \angbra{w,\begin{pmatrix}\log(c)\\\mu\end{pmatrix}} \text{ for all } w \in \wt{U}_2.
\intertext{Then $\wt{U}_1 \cap \wt{U}_2 \in \wt{\F}$ and }
    &\angbra{w,\begin{pmatrix}\log(a)\\u\end{pmatrix}}\leq\angbra{w,\begin{pmatrix}\log(c)\\\mu\end{pmatrix}} \text{ for all } w\in \wt{U}_1\cap\wt{U}_2\\
\end{align*}
so $a\chi^u \leq_{\wtcalf} c\chi^\mu$. Thus, $\leq_{\wtcalf}$ is a preorder. To see that it is a total preorder, suppose that $a\chi^u \not\leq_{\wtcalf} b\chi^\lambda$. Then for every $\wt{U}\in \wtcalf$, $\wt{U}\not\subseteq\wt{R}(b\chi^\lambda,a\chi^u)$. We now consider two cases: either there is some $\wt{U}\in\wtcalf$ such that $\wt{U} \subseteq  \wt{R}( a\chi^u, b\chi^\lambda)$ or, for all $\wt{U}\in \wtcalf$ both $\wt{U} \cap \wt{R}( a\chi^u, b\chi^\lambda)$ and $\wt{U} \cap \wt{R}(b\chi^\lambda,  a\chi^u)$ are non-empty.
In the first case  $a\chi^u \geq_{\wtcalf} b\chi^\lambda$ and we are done. In the second case, fix any $\wt{U}\in \wtcalf$. Since $$\wt{U} = (\wt{U} \cap \wt{R}( a\chi^u, b\chi^\lambda)) \cup (\wt{U} \cap \wt{R}(b\chi^\lambda,  a\chi^u)),$$
we get that either $\wt{U} \cap \wt{R}( a\chi^u, b\chi^\lambda) \in \wtcalf$ or $\wt{U} \cap \wt{R}(b\chi^\lambda,  a\chi^u) \in \wtcalf$, because $\wtcalf$ is prime. However, since all $\wt{V} \in \wtcalf$ are not contained in $\wt{R}(b\chi^\lambda,a\chi^u)$, we must have  $\wt{U} \cap \wt{R}(a\chi^u, b\chi^\lambda) \in \wtcalf$. Thus, we have $a\chi^u \geq_{\wtcalf} b\chi^\lambda$.

(2) ($\leq_{\wtcalf}$ is multiplicative) Let  $a\chi^u, b\chi^\lambda$ and $ c\chi^\mu$ be terms in $\base[\Mon]$, with $a\chi^u \leq_{\wtcalf} b\chi^\lambda$. Then there exists $\wt{U} \in \wtcalf$ such that $\wt{U}\subseteq\wt{R}(b\chi^\lambda,a\chi^u)=\wt{R}(b\chi^\lambda c\chi^\mu,a\chi^u c\chi^\mu)$, so $a\chi^u c\chi^\mu\leq_{\wtcalf}b\chi^\lambda c\chi^\mu$.

(3) ($\leq_{\wtcalf}$ respects the order on $\base$) Let $a, b \in \base^\times$ such that $a \leq b$. Then every $\wt{U}\in\wtcalf$ is in $\wt{R}(b,a)=\R_{\geq0}\times N_{\R}$.

Thus, $\leq_{\wtcalf}$ is $\leq_P$ for some prime congruence $P$ on $\base[\Mon]$.

Now consider $\leq_{\calf}$. Applying the above to $\wtcalf=\{c(U)\;:\;U\in\calf\}$, we get that $\leq_{\calf}$, which is the same as $\leq_{\wtcalf}$, satisfies conditions (1), (2), and (3) in Proposition~\ref{prop: side_claim}. So it suffices to show that $\leq_{\calf}$ satisfies condition (4) from that proposition.

Consider a term $a\chi^u \in \base[\Mon]$. Fixing a polytope $U \in \F$, the set $\{ \log(a) + \left< \xi, u\right>\;:\; \xi\in U \}$ has a minimum value $\gamma \in \R$. Pick $\beta < \gamma$ in $\Gamma$ and set $b = t^\beta\in\base^\times$. Then $b <_\F a\chi^u$.
\end{proof}

In light of Proposition~\ref{prop: relationsF-P}, and because a prime congruence $P$ on $\base[\Mon]$ is determined by the relation $\leq_P$ on $\Terms{\base[\Mon]}$, we can make the following definitions.

\begin{defi}\label{def:PrimeGivenByFilter}
Given a prime filter $\wtcalf$ on the \ConoidLarry, the \emph{prime congruence $P_{\wtcalf}$ defined by $\wtcalf$} is the unique prime congruence $P$ on $\base[\Mon]$ such that $\leq_{\wtcalf}$ equals $\leq_P$. For any prime filter $\F$ on the \BigLarry\ such that $\F$ contains a polytope, the \emph{prime congruence $P_{\calf}$ defined by $\calf$} is the unique $P\in\ContBase{\base}\base[\Mon]$ such that $\leq_\F$ equals $\leq_P$.
\end{defi}

\begin{lemma}\label{lem: G-rational-Farkas}
Consider elements $a, a_1,a_2,\ldots,a_n\in\base^{\times}$ and $u,u_1, u_2,\ldots,u_n\in\Mon$, and suppose that $\displaystyle\bigcap_{l=1}^{n}R(1_{\base},a_l\chi^{u_l})$ is nonempty. Then $\displaystyle\bigcap_{l=1}^{n} R(1_{\base},a_l\chi^{u_l})\subseteq
R(1_{\base},a\chi^u)$ if and only if there are 
$m_1,m_2,\ldots,m_n\in\Z_{\geq0}$, $m\in\Z_{>0}$, and $b\in\base$
with $b\geq 1_{\base}$ such that $b(a\chi^u)^m=\displaystyle\prod_{l=1}^{n}(a_l\chi^{u_l})^{m_l}$.
\end{lemma}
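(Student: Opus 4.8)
The statement is a ``tropical Farkas lemma'' for $\Gamma$-admissible half-spaces, so the plan is to recast everything additively and invoke a rational/$\Gamma$-rational version of classical Farkas (or Motzkin transposition). Write $\mathfrak{u}=\begin{pmatrix}\log(a)\\u\end{pmatrix}\in\Gamma\times\Mon$ and $\mathfrak{u}_l=\begin{pmatrix}\log(a_l)\\u_l\end{pmatrix}$ for the exponent vectors. Under the pairing $(\R_{\geq0}\times N_{\R})\times(\Gamma\times\Mon)\to\R$, the set $R(1_\base,a_l\chi^{u_l})$ restricted to $\{1\}\times N_\R$ is cut out by $\angbra{(1,\xi),\mathfrak{u}_l}\leq0$; so the containment $\bigcap_l R(1_\base,a_l\chi^{u_l})\subseteq R(1_\base,a\chi^u)$ says exactly that the linear inequality $\angbra{(1,\xi),\mathfrak{u}}\leq0$ is a consequence of the system $\{\angbra{(1,\xi),\mathfrak{u}_l}\leq0\}_{l}$ together with the implicit constraint $r\geq0$ coming from working in $\R_{\geq0}\times N_\R$ (equivalently, pass to the homogenized sets $\wt R$ in the cone $\R_{\geq0}\times N_\R$ and use that a point of $\{1\}\times N_\R$ lies in $R$ iff the corresponding ray lies in $\wt R$). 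The ``if'' direction is the trivial one: if $b(a\chi^u)^m=\prod_l(a_l\chi^{u_l})^{m_l}$ with $b\geq 1_\base$, then taking exponent vectors gives $\log(b)+m\,\mathfrak{u}=\sum_l m_l\,\mathfrak{u}_l$ with $\log(b)\geq 0$, so for any $w\in\bigcap_l\wt R(1_\base,a_l\chi^{u_l})$ we get $m\angbra{w,\mathfrak{u}}=\angbra{w,\sum_l m_l\mathfrak u_l}-\angbra{w,(\log b,0)}\leq 0-0=0$ (the last using $w\in\R_{\geq0}\times N_\R$), hence $w\in\wt R(1_\base,a\chi^u)$; restrict to $\{1\}\times N_\R$.

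For the ``only if'' direction, the plan is: by the nonemptiness hypothesis the system $\{\angbra{(1,\xi),\mathfrak u_l}\le 0\}$ is feasible, so by Farkas' lemma (applied to the cone $\wt R=\bigcap_l\wt R(1_\base,a_l\chi^{u_l})\subseteq\R_{\geq0}\times N_\R$, which is a rational-in-the-$\Gamma$-sense polyhedral cone by Lemma~\ref{lemma:RatSetsAreGammaRat/Ad}) the inequality $\angbra{w,\mathfrak u}\le 0$ valid on $\wt R$ is a nonnegative combination of the defining inequalities of $\wt R$; that is, there exist $\lambda_l\geq0$ and $\lambda\geq0$ (the coefficient of the constraint $r\geq 0$, i.e. of $-e_1^*$) with $\mathfrak u=\sum_l\lambda_l\mathfrak u_l-\lambda e_1$, where $e_1=(1,0)\in\Gamma\times\Mon$ wait --- more precisely $\mathfrak u+\lambda e_1^{*}$... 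I will phrase it as $\mathfrak u=\sum_l\lambda_l\mathfrak u_l-(\lambda,0)$ with $\lambda\ge 0$. Now the key rationality point: all the $\mathfrak u_l$ and $\mathfrak u$ lie in the finitely generated group $\Gamma\times\Mon$, and more importantly their second ($\Mon$) coordinates lie in $\Z^n$ while their first coordinates lie in $\Gamma$; the linear system expressing $\mathfrak u$ as a combination of the $\mathfrak u_l$ and $e_1$ has rational coefficients, so it has a rational solution $(\lambda_l,\lambda)$, and then clearing denominators yields $m\in\Z_{>0}$ and $m_l=m\lambda_l\in\Z_{\geq0}$ with $m\,\mathfrak u=\sum_l m_l\,\mathfrak u_l-(m\lambda,0)$. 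Translating back multiplicatively, $\beta:=m\lambda\ge 0$ lies in $\Gamma_{\geq 0}$... here one must be slightly careful that $m\lambda\in\Gamma$: since $\lambda$ is rational we cannot immediately conclude $m\lambda\in\Gamma$, but we can instead solve for $\lambda$ directly from $\lambda=\sum_l\lambda_l\log(a_l)-\log(a)$ once the $\lambda_l$ are fixed rationals, giving $m\lambda=\sum_l m_l\log(a_l)-m\log(a)\in\Gamma$ automatically, and it is $\geq 0$. Setting $b=t^{m\lambda}\in\base$, we get $b\geq 1_\base$ and $b(a\chi^u)^m=\prod_l(a_l\chi^{u_l})^{m_l}$, as desired.

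The main obstacle, and the step deserving real care, is the rationality/integrality bookkeeping in the ``only if'' direction: Farkas produces \emph{real} nonnegative multipliers, and one needs that the \emph{defining data} $\mathfrak u,\mathfrak u_1,\dots,\mathfrak u_n$ being in $\Gamma\times\Z^n$ (with the $\Mon$-coordinates genuinely integral) forces the existence of \emph{rational} multipliers, which can then be scaled to integers without disturbing the sign conditions or the membership $b\in\base$. The cleanest way is: (i) note the $\Mon$-coordinate equation $u=\sum_l\lambda_l u_l$ wait that's wrong since there's a $-(m\lambda,0)$ only in the first coordinate --- so the $\Mon$-coordinate equation is exactly $u=\sum_l\lambda_l u_l$ with $u,u_l\in\Z^n$, a rational linear system, hence has a rational solution $\lambda_l\in\Q_{\ge 0}$ in the (nonempty, rational) polyhedron of valid Farkas multipliers; (ii) fix such rational $\lambda_l$, clear denominators to get $m\in\Z_{>0}$, $m_l\in\Z_{\ge 0}$ with $m\,u=\sum_l m_l u_l$; (iii) \emph{then} check the coefficient inequality: for any $w$ in the (nonempty) cone $\wt R$, $\angbra{w,m\mathfrak u-\sum m_l\mathfrak u_l}\le 0$, and since this vector has zero $\Mon$-coordinate it equals $(m\log a-\sum m_l\log a_l)\cdot e_1$; evaluating at an interior $w$ with positive first coordinate forces $m\log a-\sum_l m_l\log a_l\le 0$, i.e. $\sum_l m_l\log a_l-m\log a=:\log b\ge 0$ with $b\in\base$ since $\log b\in\Gamma$. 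I expect step (iii) — extracting the sign of $b$ from the valid-inequality data after the multipliers have been chosen only to satisfy the $\Mon$-coordinate equation — to be the subtle point, together with making sure one starts from the nonemptiness hypothesis so that Farkas applies and so that an interior point with positive $r$-coordinate exists.
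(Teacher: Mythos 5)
Your overall route---translate $b(a\chi^u)^m=\prod_l(a_l\chi^{u_l})^{m_l}$ into the additive system $\log b+m\log a=\sum_l m_l\log a_l$ and $mu=\sum_l m_l u_l$, then extract the multipliers from a Farkas argument---is the same as the paper's, and your ``if'' direction is correct. The gap is in producing a \emph{rational} Farkas certificate in the ``only if'' direction. You assert that the polyhedron of valid Farkas multipliers is a ``(nonempty, rational) polyhedron,'' but it is not rational: its defining system includes the inequality $\sum_l\lambda_l\log a_l\geq\log a$, whose coefficients $\log a_l$ and right-hand side $\log a$ lie in $\Gamma$ and need not be rational. It is therefore not automatic that this nonempty set contains a rational point; that is exactly what the reference the paper invokes, \cite[Proposition~2.3]{Fri19} applied to the ordered-field extension $\R/\Q$, supplies. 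You are implicitly re-deriving that proposition, and the derivation does not close.

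Your ``cleanest way'' reformulation makes the gap concrete rather than repairing it. In step (iii) you assert that $\angbra{w,m\mathfrak{u}-\sum_l m_l\mathfrak{u}_l}\le0$ for all $w\in\wt{R}$, after having chosen rational $\lambda_l\geq0$ subject only to $u=\sum_l\lambda_l u_l$. That inequality is precisely the thing to be proved, not assumed: from $m\angbra{w,\mathfrak{u}}\le0$ and $\sum_l m_l\angbra{w,\mathfrak{u}_l}\le0$ one gets a difference of two nonpositive quantities, which has no a priori sign. And it can in fact fail. Take $\Mon=\Z$, $\Gamma=\Q+\Q\sqrt2$, $n=2$, $u_1=u_2=u=1$, $\log a_1=0$, $\log a_2=-\sqrt2$, $\log a=-\sqrt2/2$. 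Then $\bigcap_l R(1_\base,a_l\chi^{u_l})=(-\infty,0]\subseteq(-\infty,\sqrt2/2]=R(1_\base,a\chi^u)$, so the hypothesis holds, and $(\lambda_1,\lambda_2)=(0,1)$ is a nonnegative rational solution of $\lambda_1+\lambda_2=1$; but it gives $m=1$, $(m_1,m_2)=(0,1)$, and $\sum_l m_l\log a_l-m\log a=-\sqrt2/2<0$, so the resulting $b$ satisfies $b<1_\base$. The correct choice $(1,0)$ is singled out only by the irrational inequality that steps (i)--(ii) never consult. A self-contained repair would need to argue, for instance, that the irrational halfspace $\{\sum_l\lambda_l\log a_l\geq\log a\}$ either meets the relative interior of the rational polyhedron $\{\lambda\geq0:\sum_l\lambda_l u_l=u\}$ (so rational points are dense in the intersection) or supports it along a face (a face of a rational polyhedron is rational); the paper sidesteps this case analysis by citing \cite{Fri19}.
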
\begin{proof}
Note that $b(a\chi^u)^m=\displaystyle\prod_{l=1}^{n}(a_l\chi^{u_l})^{m_l}$ is equivalent to having 
$$\log b + m\log a  = \sum_{l=1}^n m_l \log a_l \text{ and } mu = \sum_{l=1}^n m_l u_l,$$
which is in turn equivalent to the existence of nonnegative rational numbers $r_1, \ldots, r_n$, such that 
$$\log a  \leq \sum_{l=1}^n r_l\log a_l \text{ and } u = \sum_{l=1}^n r_l u_l.$$
By \cite[Proposition 2.3]{Fri19} applied with the extension of ordered fields $\R/\Q$, this happens exactly if 
$\langle x,-u\rangle\geq\log a$ is valid for all $x$ in 
$$\{x\in N_{\R}\ :\ \langle x,-u_l \rangle\geq\log a_l\text{, for }1\leq l\leq n\}=\displaystyle\bigcap_{l=1}^n R(1_{\base}, a_l\chi^{u_l}),$$
i.e., if 
$\displaystyle\bigcap_{l=1}^{n}R(1_{\base}, a_l\chi^{u_l})\subseteq R(1_{\base},a\chi^u)$.
\end{proof}

\begin{remark}\label{remark:ThisFailsForCones}
The version of Lemma~\ref{lem: G-rational-Farkas} with $\wt{R}$ in place of $R$ is false. For example, let $\base=\T$ and $\Mon=\Z$. Then $\wt{R}(1_\T,t^1x^{-1})\cap\wt{R}(1_\T,t^1)\subseteq\wt{R}(1_\T,t^1x^{-2})$, but there are no $m_1,m_2\in\Z_{\geq0}$, $m\in\Z_{>0}$, and $b\in\T$ with $b\geq 1_{\T}$ such that $b(t^1x^{-2})^m=(t^1x^{-1})^{m_1}(t^1)^{m_2}$. 
\end{remark}

In light of Remark~\ref{remark:ThisFailsForCones}, the proof of the following proposition requires case-work.

\begin{prop}\label{prop:IneqInPrimeIffHalfspaceInItsFilter}
For any prime congruence $P$ on $\base[\Mon]$, $\wt{R}(1_\base,a\chi^u)\in\wtcalf_P$ if and only if $1_{\kappa(P)}\geq|a\chi^u|_P$. If $P\in\ContBase{\base}\base[\Mon]$ then $R(1_\base,a\chi^u)\in\calf_P$ if and only if $1_{\kappa(P)}\geq|a\chi^u|_P$.
\end{prop}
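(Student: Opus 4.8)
The plan is to prove the statement for $\wt{\calf}_P$ first and then deduce the $\calf_P$ case by restriction to $\{1\}\times N_{\R}$, exactly as in the last part of the proof of Theorem~\ref{thm:TheFilterHasFlagMeaning}. Write $P = P_{\calc_\bullet}$ for a simplicial flag of cones $\calc_\bullet = (\calc_0 \leq \cdots \leq \calc_k)$, which is possible by the discussion following Definition~\ref{def:PrimeOfAFlag}. The key tool is Proposition~\ref{prop:GeomEvalOfPNorm}, which characterizes $1_{\kappa(P)} \geq |a\chi^u|_P$ by the geometric conditions~(\ref{eq: conditions-rational-sets-geom}.$i$).

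First I would dispatch the easier implication. Suppose $1_{\kappa(P)} \geq |a\chi^u|_P$. By Proposition~\ref{prop: relationsF-P}, $\leq_{\wtcalf_P}$ is $\leq_{P'}$ for some prime $P'$, and one checks (this is essentially the content of Proposition~\ref{prop: relationsF-P} and Definition~\ref{def:PrimeGivenByFilter}) that $P' = P$, so $\wt{R}(b\chi^\lambda, a\chi^u) \in \wtcalf_P$ exactly when $a\chi^u \leq_P b\chi^\lambda$; taking $b\chi^\lambda = 1_\base$ gives $\wt{R}(1_\base, a\chi^u) \in \wtcalf_P$. Alternatively, and more self-containedly, I would argue directly: $\wt{R}(1_\base, a\chi^u)$ is a $\Gamma$-admissible cone (Lemma~\ref{lemma:RatSetsAreGammaRat/Ad}), so by Theorem~\ref{thm:TheFilterHasFlagMeaning} it lies in $\wtcalf_P$ iff it is a neighborhood of $\calc_\bullet$, i.e.\ meets $\relint \calc_i$ for all $i$; but Proposition~\ref{prop:GeomEvalOfPNorm} says some condition~(\ref{eq: conditions-rational-sets-geom}.$j$) holds, hence (by Remark~\ref{rem:ResultsOfConditionsInProp:GeomEvalOfPNorm}) $\calc_i \subseteq \wt{R}(1_\base,a\chi^u)$ for $i \leq j$ and $\relint\calc_i \subseteq \wt{R}^\circ(1_\base,a\chi^u)$ for $i \leq j$ with $j<k$ — this gives meeting $\relint\calc_i$ for $i \leq j$, but NOT for $i > j$. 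So the direct approach via Theorem~\ref{thm:TheFilterHasFlagMeaning} does not immediately work, and this is the subtle point: being in the filter is \emph{not} the same as $\wt R(1_\base,a\chi^u)$ itself being a neighborhood. Instead, for this direction I would use that $\wt R(1_\base,a\chi^u) \supseteq \wt R(1_\base, a\chi^u)$ trivially together with Proposition~\ref{prop: relationsF-P}/Definition~\ref{def:PrimeGivenByFilter} route above, or directly exhibit a neighborhood of $\calc_\bullet$ contained in $\wt R(1_\base,a\chi^u)$ using a defining matrix of the form in Proposition~\ref{coro: matrix_form} — the row $\xi_0$ lies in $\wt R(1_\base,a\chi^u)$ by condition~(\ref{eq: conditions-rational-eqiv}), and combining $\xi_0$ with small multiples of the other rows produces a point in $\relint\calc_i$ for each $i$ that still satisfies all the defining inequalities, yielding a $\Gamma$-admissible neighborhood of $\calc_\bullet$ inside $\wt R(1_\base, a\chi^u)$.

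For the converse, suppose $\wt{R}(1_\base, a\chi^u) \in \wtcalf_P$ but $1_{\kappa(P)} \not\geq |a\chi^u|_P$, aiming for a contradiction. Since $\leq_P$ is total, $1_{\kappa(P)} < |a\chi^u|_P$, equivalently $a^{-1}\chi^{-u} <_P 1_\base$, so $1_{\kappa(P)} \geq |a^{-1}\chi^{-u}|_P$ (indeed strictly). By the forward direction already proven, $\wt{R}(1_\base, a^{-1}\chi^{-u}) \in \wtcalf_P$, and since $\wtcalf_P$ is a filter (Theorem~\ref{thm:TheFilterIsAPrimeFilter}), $\wt{R}(1_\base, a\chi^u) \cap \wt{R}(1_\base, a^{-1}\chi^{-u}) \in \wtcalf_P$. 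But this intersection is $\{w \in \R_{\geq 0}\times N_{\R} : \angbra{w,\fraku} = 0\}$ where $\fraku$ is the exponent vector of $a\chi^u$, a hyperplane (or all of $\R_{\geq0}\times N_\R$ if $\fraku=0$, which is excluded since then $a \in \base^\times$ and $1_{\kappa(P)}\geq|a|_P$ would follow from $\leq_P$ respecting the order on $\base$ after replacing $a$ by $\min(a,1)$ — handle this degenerate case separately). Then by Remark~\ref{rem:ResultsOfConditionsInProp:GeomEvalOfPNorm} applied to both $a\chi^u$ and $a^{-1}\chi^{-u}$ (whose membership in $\wtcalf_P$ we can re-derive), $\calc_0 \subseteq \wt{R}(1_\base,a\chi^u) \cap \wt{R}(1_\base,a^{-1}\chi^{-u}) = \fraku^\perp$; more precisely, using that $\wt R(1_\base,a^{-1}\chi^{-u})\in\wtcalf_P$ forces the strict condition, I would push this to show $\relint\calc_k \subseteq \fraku^\perp$, hence $\angbra{v_i,\fraku}=0$ for all rows $v_i$ of the defining matrix, i.e.\ $|a\chi^u|_P = 1_{\kappa(P)}$, contradicting $1_{\kappa(P)} < |a\chi^u|_P$.

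The $\calf_P$ statement then follows: for $P \in \ContBase{\base}\base[\Mon]$, take $\calc_\bullet = c(\calp_\bullet)$, note $R(1_\base,a\chi^u) \in \calf_P \iff c(R(1_\base,a\chi^u)) = \wt{R}(1_\base,a\chi^u) \in \wtcalf_P$ (using $R(1_\base,a\chi^u)$ nonempty when $1_{\kappa(P)}\geq|a\chi^u|_P$, by Lemma~\ref{lemma:RatSetsAreGammaRat/Ad}, and otherwise arguing that $R(1_\base,a\chi^u)\notin\calf_P$ directly from $c(R(1_\base,a\chi^u))\notin\wtcalf_P$), and apply the cone case. The main obstacle I anticipate is the converse direction: one must carefully leverage \emph{both} the membership of $\wt R(1_\base,a\chi^u)$ and of $\wt R(1_\base,a^{-1}\chi^{-u})$ in the filter and track which of the strict-versus-nonstrict conditions~(\ref{eq: conditions-rational-sets-geom}.$i$) can hold simultaneously, to conclude that the entire flag lies in the hyperplane $\fraku^\perp$; the degenerate case $\fraku=0$ and the bookkeeping around $R$ being empty also require separate, if routine, attention.
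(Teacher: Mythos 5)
Your overall strategy for the ``only if'' direction is genuinely different from the paper's, and the core idea is sound: if $1_{\kappa(P)}<|a\chi^u|_P$ then $|a^{-1}\chi^{-u}|_P<1_{\kappa(P)}$, so by the easy direction $\wt R(1_\base,a^{-1}\chi^{-u})\in\wtcalf_P$; the filter property then puts the $\Gamma$-admissible cone $H:=\wt R(1_\base,a\chi^u)\cap\wt R(1_\base,a^{-1}\chi^{-u})=\{w\in\R_{\geq0}\times N_\R:\angbra{w,\fraku}=0\}$ in $\wtcalf_P$, and Theorem~\ref{thm:TheFilterHasFlagMeaning} makes $H$ a neighborhood of $\calc_\bullet$. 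Picking $w_i\in H\cap\relint\calc_i\subseteq\calc_i\sdrop\calc_{i-1}$ produces a defining matrix $C$ for $P$ (Definition~\ref{def:PrimeOfAFlag}) with $C\fraku=0$, so $|a\chi^u|_P=1_{\kappa(P)}$, a contradiction. This avoids entirely the Farkas-type Lemma~\ref{lem: G-rational-Farkas} that the paper uses, and the resulting case split over whether $\calc_k\subseteq\{0\}\times N_\R$. Both proofs lean on Theorem~\ref{thm:TheFilterHasFlagMeaning} (the paper uses it only to decide whether the filter contains a set inside $\{0\}\times N_\R$), so there is no dependency problem.

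That said, the execution has real problems. For the ``if'' direction your first route is circular: the identification $P'=P$, i.e.\ that $\leq_{\wtcalf_P}$ equals $\leq_P$, is precisely what Theorem~\ref{thm:BijectionFiltersAndCont} establishes, and its proof \emph{uses} the proposition you are proving. Your second route via Theorem~\ref{thm:TheFilterHasFlagMeaning} can be made to work (your worry about $i>j$ is unfounded --- once a point of $\relint\calc_j$ lies in the open half-space $\wt R^\circ$, you can push along any $v_i\in\relint\calc_i$ for $i>j$ and land in $\relint\calc_i\cap\wt R$), but it is far heavier than needed: this direction is immediate from the definition of $\wtcalf_P$, by taking $f_0=1_\base$, $f_1=a\chi^u$, and $\wt U=\wt R(1_\base,a\chi^u)$. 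For the ``only if'' direction, the appeal to Remark~\ref{rem:ResultsOfConditionsInProp:GeomEvalOfPNorm} applied to $a\chi^u$ is not available --- the remark requires $1_{\kappa(P)}\geq|a\chi^u|_P$, which you have assumed fails. And the proposed intermediate conclusion $\relint\calc_k\subseteq\fraku^\perp$ is both more than you need and, as stated, established only by a hand-wave (``I would push this''); the clean argument requires only that $H$ \emph{meet} each $\relint\calc_i$, which Theorem~\ref{thm:TheFilterHasFlagMeaning} gives you directly.
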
\begin{proof}
The ``if'' directions in the above statements follow immediately from the definitions of $\wtcalf_p$ and $\calf_P$. For the other directions, note that the statements are trivially true if $a=0_\base$, so assume $a\neq0_\base$.

Suppose $P\in\ContBase{\base}\base[\Mon]$ and $R(1_\base,a\chi^u)\in\calf_P$. So there are terms $a_1\chi^{u_1},\ldots,a_n\chi^{u_n}$ such that $R(1_\base, a\chi^u) \supseteq \cap_{l=1}^n R(1_\base, a_l\chi^{u_l})$ and $|a_l\chi^{u_l}|_{P} \leq 1_{\kappa(P)}$ for each $l$. Lemma~\ref{lemma:RatSetsAreGammaRat/Ad} tells us that $\cap_{l=1}^n R(1_\base, a_l\chi^{u_l})$ is nonempty. So, by Lemma~\ref{lem: G-rational-Farkas}, there are $m_1,m_2,\ldots,m_n\in\Z_{\geq0}$, $m\in\Z_{>0}$, and $b\in\base$ with $b\geq1_\base$ such that $b(a\chi^u)^m=\displaystyle\prod_{l=1}^{n}(a_l\chi^{u_l})^{m_l}$. Then $b^{-1}\leq 1_{\base}$ and 
$$|a\chi^u|_P^m=|b^{-1}|_P\dprod_{l=1}^n|a_l\chi^{u_l}|_P^{m_l}\leq 1_{\kappa(P)},$$
so $|a\chi^u|_P\leq 1_{\kappa(P)}$.

Now consider a general prime congruence $P$ on $\base[\Mon]$, and suppose $\wt{R}(1_\base,a\chi^u)\in\wtcalf_P$. There are terms $a_1\chi^{u_1},\ldots,a_n\chi^{u_n}$ such that $\wt{R}(1_\base, a\chi^u) \supseteq \cap_{l=1}^n \wt{R}(1_\base, a_l\chi^{u_l})$ and $|a_l\chi^{u_l}|_{P} \leq 1_{\kappa(P)}$ for each $l$. Let $\calc_\bullet=(\calc_0\leq\cdots\leq\calc_k)$ be a simplicial flag of cones such that $P=P_{\calc_\bullet}$. Theorem~\ref{thm:TheFilterHasFlagMeaning} tells us that there is some $\wt{U}\in\wtcalf_P$ with $\wt{U}\subseteq\{0\}\times N_{\R}$ if and only if $\calc_k\subseteq\{0\}\times N_{\R}$. We consider two cases, as to whether this happens or not.

Suppose that $\calc_k\not\subseteq\{0\}\times N_{\R}$. Since $\wt{R}(1_\base,a_1\chi^{u_1},\ldots,a_n\chi^{u_n})=\cap_{l=1}^n \wt{R}(1_\base, a_l\chi^{u_l})$  and $\wt{R}(1_\base,a\chi^u)$ are in $\wtcalf_P$, neither of them is contained in $\{0\}\times N_{\R}$. Therefore, $\wt{R}(1_\base,a\chi^u)=c(R(1_\base,a\chi^u))$ and $\wt{R}(1_\base,a_1\chi^{u_1},\ldots,a_n\chi^{u_n})=c(R(1_\base,a_1\chi^{u_1},\ldots,a_n\chi^{u_n}))$. So $$R(1_\base,a\chi^u)\supseteq R(1_\base,a_1\chi^{u_1},\ldots,a_n\chi^{u_n})=\cap_{l=1}^n R(1_\base, a_l\chi^{u_l})$$ with $\cap_{l=1}^n R(1_\base, a_l\chi^{u_l})$ nonempty. We can now apply Lemma~\ref{lem: G-rational-Farkas} and, as in the previous case, conclude that $|a\chi^u|_P\leq 1_{\kappa(P)}$.

Finally, suppose that $\calc_k\subseteq\{0\}\times N_{\R}$. Then, by the definition of $P_{\calc_\bullet}$, $|b|_P=1_{\kappa(P)}$ for any $b\in\base^\times$. Note that $\wt{R}(1_\base,a\chi^u)\cap(\{0\}\times N_{\R})=\{0\}\times u^\vee$, where $u^\vee=\{\zeta\in N_{\R}\;:\;\angbra{\zeta,u}\leq0_{\R}\}$. Similarly, $\cap_{l=1}^n\wt{R}(1_\base,a\chi^u)\cap(\{0\}\times N_{\R})=\{0\}\times \{u_1,\ldots,u_n\}^\vee$, so we have that $\{u_1,\ldots,u_n\}^\vee\subseteq u^\vee$. So, by the $\Q$-version of \cite[Proposition 1.9]{Zieg95},
there are $m_1,m_2,\ldots,m_n\in\Z_{\geq0}$ and $m\in\Z_{>0}$ such that $mu=\sum_{l=1}^n m_lu_l$. Thus,
$$|a\chi^u|_P^m=|\chi^u|_P^m=\dprod_{l=1}^n|\chi^{u_l}|_P^{m_l}=\dprod_{l=1}^n|a_l\chi^{u_l}|_P^{m_l}\leq 1_{\kappa(P)},$$
so $|a\chi^u|_P\leq 1_{\kappa(P)}$.
\end{proof}

\begin{theorem}\label{thm:BijectionFiltersAndCont}
The map $P\mapsto\wtcalf_P$ gives a bijection from the set of prime congruences on $\base[\Mon]$ to the set of prime filters on the \ConoidLarry. The inverse map is given by $\wtcalf\mapsto P_{\wtcalf}$.

The map $P\mapsto\calf_P$ gives a bijection from $\ContBase{\base}{\base[\Mon]}$ to the set of prime filters on the \BigLarry\ where the filter contains some polytope. The inverse map is given by $\calf\mapsto P_\calf$.
\end{theorem}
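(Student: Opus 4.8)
The plan is to prove that the two asserted pairs of maps are mutually inverse; well-definedness is essentially already in hand. For the first pair, $P\mapsto\wtcalf_P$ lands among prime filters on the \ConoidLarry\ by Theorem~\ref{thm:TheFilterIsAPrimeFilter}, while $\wtcalf\mapsto P_{\wtcalf}$ is the assignment of Definition~\ref{def:PrimeGivenByFilter}, legitimate by Proposition~\ref{prop: relationsF-P} together with the fact that a prime congruence is determined by its preorder on $\Terms{\base[\Mon]}$. For the second pair one argues identically, additionally using Lemma~\ref{lemma:FilterHasPolytope} to see that $\calf_P$ contains a polytope, and the part of Proposition~\ref{prop: relationsF-P} and Definition~\ref{def:PrimeGivenByFilter} concerning $\ContBase{\base}\base[\Mon]$ to see $P_\calf\in\ContBase{\base}\base[\Mon]$. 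So the real content is the two round-trip identities, which I would prove in the fan-support-set case and then transcribe to the polyhedral case.

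First I would check $P_{\wtcalf_P}=P$. Since a prime congruence is determined by its preorder on terms, it suffices to show $\leq_{\wtcalf_P}$ coincides with $\leq_P$, and because $a\chi^u\leq_{\wtcalf}b\chi^\lambda$ iff $\wt{R}(b\chi^\lambda,a\chi^u)\in\wtcalf$ (recorded just before Proposition~\ref{prop: relationsF-P}) this reduces, for terms with nonzero coefficient, to the assertion that $\wt{R}(b\chi^\lambda,a\chi^u)=\wt{R}(1_\base,b^{-1}a\chi^{u-\lambda})\in\wtcalf_P$ iff $1_{\kappa(P)}\geq|b^{-1}a\chi^{u-\lambda}|_P$, which is exactly Proposition~\ref{prop:IneqInPrimeIffHalfspaceInItsFilter}; the comparisons involving $0_\base$ I would handle directly, using that $R(b\chi^\lambda,a\chi^u)$ and its homogenization $\wt{R}(b\chi^\lambda,a\chi^u)$ are then either empty or the whole space, and that the empty set lies in no proper filter. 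The identity $P_{\calf_P}=P$ for $P\in\ContBase{\base}\base[\Mon]$ is the same computation with $R$ replacing $\wt{R}$ and the second half of Proposition~\ref{prop:IneqInPrimeIffHalfspaceInItsFilter}.

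Next I would check $\wtcalf_{P_{\wtcalf}}=\wtcalf$. By condition (5) in the definition of a prime filter together with Corollary~\ref{coro:FilterDeterminedByCones/Polyhedra}, each of $\wtcalf$ and $\wtcalf_{P_{\wtcalf}}$ is determined by which $\Gamma$-admissible cones it contains, so it is enough to test on a $\Gamma$-admissible cone, written as $\wt{U}=\bigcap_{l=1}^n\wt{R}(1_\base,a_l\chi^{u_l})$ with $a_l\in\base^\times$. If $\wt{U}\in\wtcalf$, then each $\wt{R}(1_\base,a_l\chi^{u_l})\supseteq\wt{U}$ lies in $\wtcalf$, i.e.\ $a_l\chi^{u_l}\leq_{\wtcalf}1_\base$, hence $a_l\chi^{u_l}\leq_{P_{\wtcalf}}1_\base$, so $1_{\kappa(P_{\wtcalf})}\geq|a_l\chi^{u_l}|_{P_{\wtcalf}}$ and the inclusion $\wt{U}=\wt{R}(1_\base,a_1\chi^{u_1},\ldots,a_n\chi^{u_n})\subseteq\wt{U}$ witnesses $\wt{U}\in\wtcalf_{P_{\wtcalf}}$. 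Conversely, if $\wt{U}\in\wtcalf_{P_{\wtcalf}}$, then by the reduction of the definition of $\wtcalf_{P_{\wtcalf}}$ to monomials (the lemma just after the definition of $\wtcalf_P$) there are terms $b_j\chi^{v_j}$ with $\bigcap_j\wt{R}(1_\base,b_j\chi^{v_j})\subseteq\wt{U}$ and $1_{\kappa(P_{\wtcalf})}\geq|b_j\chi^{v_j}|_{P_{\wtcalf}}$; the latter gives $b_j\chi^{v_j}\leq_{P_{\wtcalf}}1_\base$, hence $\wt{R}(1_\base,b_j\chi^{v_j})\in\wtcalf$, hence their finite intersection lies in the filter, hence so does the larger set $\wt{U}$. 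For the polyhedral statement $\calf_{P_\calf}=\calf$ (for prime filters $\calf$ on the \BigLarry\ containing a polytope) I would run the same argument with $R$ for $\wt{R}$ and $\Gamma$-rational polyhedra for $\Gamma$-admissible cones; the one extra point is that in the converse direction one must invoke Lemma~\ref{lemma:RatSetsAreGammaRat/Ad} to know that $R(1_\base,b_1\chi^{v_1},\ldots,b_m\chi^{v_m})$ is a nonempty $\Gamma$-rational polyhedron, hence a genuine element of the lattice, before applying the filter axioms to it.

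I do not expect a serious obstacle here: the two facts carrying the weight — Proposition~\ref{prop:IneqInPrimeIffHalfspaceInItsFilter}, which converts membership of a half-space in the filter into an inequality in the prime, and Theorem~\ref{thm:TheFilterHasFlagMeaning} via Corollary~\ref{coro:FilterDeterminedByCones/Polyhedra}, which lets me reduce from fan support sets to cones and from polyhedral sets to polyhedra — are already established. The main thing demanding care is bookkeeping: handling terms with $0_\base$ coefficient and the degenerate whole-space and empty-set values of $R$ and $\wt{R}$ in the first identity, and ensuring that the rational sets manipulated in the polyhedral version of the second identity are genuinely nonempty $\Gamma$-rational polyhedra.
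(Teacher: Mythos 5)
Your proposal is correct and follows essentially the same route as the paper: both round-trip identities are obtained by reducing to the question of when a $\Gamma$-admissible half-space (resp.\ $\Gamma$-rational half-space) lies in the filter and then appealing to Proposition~\ref{prop:IneqInPrimeIffHalfspaceInItsFilter} together with the definition of $\leq_{\wtcalf}$ (resp.\ $\leq_\calf$). The only cosmetic difference is in the second identity, where the paper passes directly to half-spaces (a prime filter on the lattice is determined by which half-spaces it contains), while you first reduce to cones/polyhedra via Corollary~\ref{coro:FilterDeterminedByCones/Polyhedra} and then break those into half-spaces using the filter axioms; this is the same argument with an extra intermediate step. One small remark: your appeal to Lemma~\ref{lemma:RatSetsAreGammaRat/Ad} for nonemptiness in the converse direction of $\calf_{P_\calf}=\calf$ is not strictly needed, since the intersection of finitely many elements of a filter is automatically in the filter (and hence nonempty, as the filter is proper), but it does no harm.
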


\begin{proof}
Note that, by Theorem~\ref{thm:TheFilterIsAPrimeFilter}, Lemma~\ref{lemma:FilterHasPolytope}, and Definition~\ref{def:PrimeGivenByFilter}, all of these maps are well-defined.

We show that $P\mapsto\calf_P$ and $\calf\mapsto P_{\calf}$ are inverses. The proof for $P\mapsto\wtcalf_P$ and $\wtcalf\mapsto P_{\wtcalf}$ is analogous.

To show that $P=P_{\calf_P}$ for any $P\in\ContBase{\base}\base[\Mon]$, it suffices to show that $a\chi^u\leq_P b\chi^\lambda$ if and only if $a\chi^u\leq_{P_{\calF_P}} b\chi^\lambda$ for any terms $a\chi^u, b\chi^\lambda\in\base[\Mon]$. Since $b\chi^\lambda$ is a unit in $\base[\Mon]$, we can divide both inequalities by $b\chi^\lambda$, and so we may assume without loss of generality that $b\chi^\lambda=1_\base$. By Proposition~\ref{prop:IneqInPrimeIffHalfspaceInItsFilter}, we know that $a\chi^u\leq_P 1_\base$ if and only if $R(1_\base,a\chi^u)\in\calf_P$. 
By the definitions of $P_{\calf_P}$ and $\leq_{\calf_P}$, 
$R(1_\base,a\chi^u)\in\calf_P$ if and only if $a\chi^u\leq_{\calf_P}1_\base$ which, in turn, is the same as having $a\chi^u\leq_{P_{\calf_P}}1_\base$.

Now fix a prime filter $\calf$ on the \BigLarry\ such that $\calf$ contains a polytope; we wish to show that $\calf=\calf_{P_\calf}$. Since $\calf$ and $\calf_{P_\calf}$ are prime filters on the \BigLarry, they are determined by which $\Gamma$-rational polyhedra they contain. Using the fact that $\calf$ and $\calf_{P_\calf}$ are both filters, we then see that they are determined by which $\Gamma$-rational half-spaces they contain. But every $\Gamma$-rational half-space can be written as $R(1_\base,a\chi^u)$ for some term $a\chi^u\in\base[\Mon]$, so it suffices to show that $R(1_\base,a\chi^u)\in\calf$ if and only if $R(1_\base,a\chi^u)\in\calf_{P_\calf}$. By Proposition~\ref{prop:IneqInPrimeIffHalfspaceInItsFilter}, $R(1_\base,a\chi^u)\in\calf_{P_\calf}$ if and only if $ a\chi^u \leq_{P_\calf} 1_\base$. By the definitions of $P_{\calf}$ and $\leq_{\calf}$, this happens if and only if $R(1_\base,a\chi^u)\in\calf$.
\end{proof}


\begin{coro}\label{coro:FlagsGiveSamePrimeIffLocallyEquiv}
Let $\calc_\bullet$ and $\calc_\bullet'$ be simplicial flags of cones. Then $P_{\calc_\bullet}=P_{\calc_\bullet'}$ if and only if $\calc_\bullet$ and $\calc_\bullet'$ are locally equivalent.

If $P=P_{\calp_\bullet}$ and $P'=P_{\calp_\bullet'}$ for some flags $\calp_\bullet$ and $\calp_\bullet'$ of polyhedra, then $P=P'$ if and only if $\calp_\bullet$ and $\calp_\bullet'$ are locally equivalent.
\end{coro}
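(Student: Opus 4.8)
The plan is to obtain this corollary as a formal consequence of two results already established: the bijection of Theorem~\ref{thm:BijectionFiltersAndCont} and the characterization of local equivalence via filters in Corollary~\ref{coro:FilterEqualIsFlagEquiv}. Since both ingredients are in hand, the argument is essentially bookkeeping, and the only substantive point is making sure we invoke the correct half of each statement.

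For the assertion about flags of cones, I would argue as follows. By Theorem~\ref{thm:BijectionFiltersAndCont} the map $P\mapsto\wtcalf_P$ from prime congruences on $\base[\Mon]$ to prime filters on the \ConoidLarry\ is a bijection, hence in particular injective. Therefore $P_{\calc_\bullet}=P_{\calc_\bullet'}$ if and only if $\wtcalf_{P_{\calc_\bullet}}=\wtcalf_{P_{\calc_\bullet'}}$, and by Corollary~\ref{coro:FilterEqualIsFlagEquiv} this equality of filters holds if and only if $\calc_\bullet$ and $\calc_\bullet'$ are locally equivalent.

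For the assertion about flags of polyhedra, the only extra step is to check that the primes in question lie in $\ContBase{\base}\base[\Mon]$, so that the second bijection of Theorem~\ref{thm:BijectionFiltersAndCont} applies. Indeed $P=P_{\calp_\bullet}=P_{c(\calp_\bullet)}$, and since $c(\calp_0)$ is a ray not contained in $\{0\}\times N_{\R}$, the discussion following Definition~\ref{def:PrimeOfAFlag} together with Proposition~\ref{coro: matrix_form} gives $P\in\ContBase{\base}\base[\Mon]$; likewise $P'\in\ContBase{\base}\base[\Mon]$. Then injectivity of $P\mapsto\calf_P$ on $\ContBase{\base}\base[\Mon]$ gives $P=P'$ if and only if $\calf_P=\calf_{P'}$, and Corollary~\ref{coro:FilterEqualIsFlagEquiv} identifies the latter with local equivalence of $\calp_\bullet$ and $\calp_\bullet'$.

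There is no genuine obstacle here, as every needed fact has already been proved; the main thing to be careful about is pairing the right version of each lemma with the right assertion (the ``fan support set'' versions for the cone statement, the ``polyhedral set'' versions for the polyhedron statement) and recording the containment in $\ContBase{\base}\base[\Mon]$ used in the second case. A more self-contained alternative would be to unwind the definitions and show directly that two simplicial flags inducing the same preorder on $\Terms{\base[\Mon]}$ have the same $\Gamma$-admissible neighborhoods, but routing through the filter bijection is shorter and reuses work already done.
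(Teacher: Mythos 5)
Your proof is correct and follows essentially the same route as the paper: both use the injectivity of the maps $P\mapsto\wtcalf_P$ and $P\mapsto\calf_P$ from Theorem~\ref{thm:BijectionFiltersAndCont} to reduce equality of primes to equality of filters, then invoke Corollary~\ref{coro:FilterEqualIsFlagEquiv} to translate equality of filters into local equivalence of flags. The paper compresses the polyhedral case to ``analogous''; your explicit check that $P_{\calp_\bullet}\in\ContBase{\base}\base[\Mon]$ via the ray $c(\calp_0)\not\subseteq\{0\}\times N_{\R}$ and Proposition~\ref{coro: matrix_form} is exactly what that word elides, so no difference in substance.
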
\begin{proof}
By Theorem~\ref{thm:BijectionFiltersAndCont}, $P=P'$ if and only if $\wtcalf_P=\wtcalf_{P'}$. Corollary~\ref{coro:FilterEqualIsFlagEquiv} tells us that this happens exactly if $\calc_\bullet$ and $\calc_\bullet'$ are locally equivalent.

The proof of the second statement is analogous.
\end{proof}

\begin{coro}\label{coro:explicitBijectionsContAndThreeOthers} Let $\base$ be a sub-semifield of $\T$ and let $\Mon\cong\Z^n$.
There are explicit bijections between
\begin{enumerate}
    \item $\ContBase{\base}\base[\Mon]$,
    \item the set of flags $\calp_\bullet$ of polyhedra in $N_{\R}$ modulo local equivalence,
    \item the set of prime filters on the \BigLarry\ such that the filter contains a polytope, and
    \item the set of prime filters on the \LittleLarry.
\end{enumerate}
These bijections are given as follows. The map from (2) to (1) sends a flag $\calp_\bullet$ to the prime congruence 
$P_{\calp_\bullet'}$ where $\calp_\bullet'$ is any flag of polyhedra which is locally equivalent to $\calp_\bullet$ and for which $c(\calp_\bullet')$ is simplicial.
The map from (1) to (3) is $P\mapsto\calf_P$. The map from (3) to (4) sends $\calf$ to 
the filter
$\{U\in\calf\;:\;U\text{ is polytopal}\}$.
\end{coro}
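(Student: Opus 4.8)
The plan is to assemble the four bijections from results already in hand, so that the proof reduces to checking that each declared map is well-defined and bijective and that the three maps compose consistently. The bijection between (1) and (3) is exactly the second assertion of Theorem~\ref{thm:BijectionFiltersAndCont}: the map $P\mapsto\calf_P$ lands in the set of prime filters on the \BigLarry\ containing a polytope by Lemma~\ref{lemma:FilterHasPolytope}, and its inverse is $\calf\mapsto P_\calf$. The bijection between (3) and (4) is precisely Proposition~\ref{prop:BigAndLittleLarry}, with the map from (3) to (4) sending $\calf$ to $\{U\in\calf\;:\;U\text{ is polytopal}\}$ and the inverse generating a filter on the \BigLarry.

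The remaining work is the bijection between (1) and (2). Define a map from flags of polyhedra in $N_{\R}$ to $\ContBase{\base}\base[\Mon]$ by $\calp_\bullet\mapsto P_{\calp_\bullet'}$, where $\calp_\bullet'$ is any flag locally equivalent to $\calp_\bullet$ for which $c(\calp_\bullet')$ is simplicial; such a $\calp_\bullet'$ exists by Proposition~\ref{prop:EveryFlagLocEquivToSimplicial}, and $P_{\calp_\bullet'}\in\ContBase{\base}\base[\Mon]$ because $c(\calp_0')$ is not contained in $\{0\}\times N_{\R}$ (the discussion after Definition~\ref{def:PrimeOfAFlag}, via Proposition~\ref{coro: matrix_form}). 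The step requiring the most care is that this map is independent of the choice of $\calp_\bullet'$ and descends to local-equivalence classes: if $\calp_\bullet'$ and $\calp_\bullet''$ are two simplicial-cone representatives of the class of $\calp_\bullet$, then they are locally equivalent to each other, so $c(\calp_\bullet')$ and $c(\calp_\bullet'')$ are locally equivalent simplicial flags of cones, whence $P_{\calp_\bullet'}=P_{\calp_\bullet''}$ by Corollary~\ref{coro:FlagsGiveSamePrimeIffLocallyEquiv}.

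The same corollary yields injectivity of the induced map on classes: $P_{\calp_\bullet'}=P_{\calq_\bullet'}$ forces $\calp_\bullet'$ and $\calq_\bullet'$, hence $\calp_\bullet$ and $\calq_\bullet$, to be locally equivalent. For surjectivity, every $P\in\ContBase{\base}\base[\Mon]$ equals $P_{\calp_\bullet}$ for some flag $\calp_\bullet$ of polyhedra (again by the discussion after Definition~\ref{def:PrimeOfAFlag}), and one may replace $\calp_\bullet$ by a locally equivalent flag with simplicial cone without changing $P$, by Corollary~\ref{coro:FlagsGiveSamePrimeIffLocallyEquiv}. Finally, with the three bijections $(2)\to(1)$, $(1)\to(3)$, and $(3)\to(4)$ established, they compose to a bijection $(2)\to(4)$, and transitivity gives bijections among all four sets; the explicit descriptions in the statement are literally the maps just constructed, so nothing further is needed. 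I expect no genuine obstacle beyond this bookkeeping: the one delicate point is the well-definedness of the map out of (2), which is exactly the purpose served by Proposition~\ref{prop:EveryFlagLocEquivToSimplicial} together with Corollary~\ref{coro:FlagsGiveSamePrimeIffLocallyEquiv}.
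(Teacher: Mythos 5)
Your proposal is correct and follows essentially the same route as the paper's (one-line) proof, which simply cites Proposition~\ref{prop:EveryFlagLocEquivToSimplicial}, Proposition~\ref{prop:BigAndLittleLarry}, Theorem~\ref{thm:BijectionFiltersAndCont}, and Corollary~\ref{coro:FlagsGiveSamePrimeIffLocallyEquiv}. You have merely unpacked the bookkeeping — well-definedness, injectivity, and surjectivity of the map out of (2) — that the authors leave implicit, and your identification of the delicate point (descent to local-equivalence classes via Proposition~\ref{prop:EveryFlagLocEquivToSimplicial} and Corollary~\ref{coro:FlagsGiveSamePrimeIffLocallyEquiv}) matches where the content actually lives.
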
\begin{proof}
This follows immediately from
Proposition~\ref{prop:EveryFlagLocEquivToSimplicial},
Proposition~\ref{prop:BigAndLittleLarry}, Theorem~\ref{thm:BijectionFiltersAndCont}, and Corollary~\ref{coro:FlagsGiveSamePrimeIffLocallyEquiv}.
\end{proof}

\begin{coro}\label{coro:explicit BijectionsAllPrimeCongsAndThreeOthers}
Let $\base$ be a sub-semifield of $\T$ and let $\Mon\cong\Z^n$.
There are explicit bijections between
\begin{enumerate}
    \item the set of all prime congruences on $\base[\Mon]$,
    \item the set of flags $\calc_\bullet$ of cones in $\R_{\geq0}\times N_{\R}$ modulo local equivalence,
    \item the set of prime filters on the \ConoidLarry.
\end{enumerate}
These bijections are given as follows. The map from (2) to (1) sends a flag $\calc_\bullet$ to the prime congruence 
$P_{\calc_\bullet'}$, where $\calc_\bullet'$ is any simplicial flag of cones which is locally equivalent to $\calc_\bullet$.
The map from (1) to (3) is $P\mapsto\wtcalf_P$.
\end{coro}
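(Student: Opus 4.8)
The plan is to build this corollary by exactly the same bookkeeping used for Corollary~\ref{coro:explicitBijectionsContAndThreeOthers}, but now with the cone-level statements in place of the polyhedral ones. The bijection between (1) and (3) is already available: it is the first assertion of Theorem~\ref{thm:BijectionFiltersAndCont}, namely $P\mapsto\wtcalf_P$, with inverse $\wtcalf\mapsto P_{\wtcalf}$. So all that remains is to set up the bijection between (2) and (1) and to note its compatibility with the map to (3).

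For the map from (2) to (1): given a flag $\calc_\bullet$ of cones, Proposition~\ref{prop:EveryFlagLocEquivToSimplicial} provides a simplicial flag $\calc_\bullet'$ locally equivalent to it, and we send the class of $\calc_\bullet$ to $P_{\calc_\bullet'}$. First I would check this is well defined on local-equivalence classes: if $\calc_\bullet'$ and $\calc_\bullet''$ are simplicial flags that are locally equivalent to locally equivalent flags $\calc_\bullet$ and $\cald_\bullet$, then $\calc_\bullet'$ and $\calc_\bullet''$ are locally equivalent to one another, so Corollary~\ref{coro:FlagsGiveSamePrimeIffLocallyEquiv} gives $P_{\calc_\bullet'}=P_{\calc_\bullet''}$. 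Surjectivity is immediate from the remark after Definition~\ref{def:PrimeOfAFlag}: every prime congruence on $\base[\Mon]$ has a defining matrix $\matt$ and hence equals $P_{\calc_\bullet(\matt)}$ for the simplicial flag $\calc_\bullet(\matt)$. Injectivity is again Corollary~\ref{coro:FlagsGiveSamePrimeIffLocallyEquiv}: if the simplicial representatives of two classes define the same prime, they are locally equivalent, hence so are the original flags.

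Composing (2)$\to$(1) with (1)$\to$(3) sends a flag $\calc_\bullet$ to $\wtcalf_{P_{\calc_\bullet'}}$, which by Theorem~\ref{thm:TheFilterHasFlagMeaning} and Corollary~\ref{coro:FilterEqualIsFlagEquiv} is the filter of $\Gamma$-admissible neighborhoods of $\calc_\bullet$; so the three maps form a commuting triangle and each is given by an explicit recipe. I do not expect a real obstacle here — the proof is essentially a one-line invocation of Proposition~\ref{prop:EveryFlagLocEquivToSimplicial}, Theorem~\ref{thm:BijectionFiltersAndCont}, and Corollary~\ref{coro:FlagsGiveSamePrimeIffLocallyEquiv}, just as in Corollary~\ref{coro:explicitBijectionsContAndThreeOthers}. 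The one point deserving a moment of care is that flags of cones, unlike flags of polyhedra, may have $\calc_0\subseteq\{0\}\times N_{\R}$, so one should cite the cone-level results (which are stated with no such restriction) rather than trying to pass through the polyhedral bijection; this causes no difficulty.
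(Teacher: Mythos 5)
Your proof is correct and matches the paper's approach exactly: the paper's own proof is a one-line citation of Proposition~\ref{prop:EveryFlagLocEquivToSimplicial}, Theorem~\ref{thm:BijectionFiltersAndCont}, and Corollary~\ref{coro:FlagsGiveSamePrimeIffLocallyEquiv}, and you have simply unpacked how these combine. Your parenthetical note about not detouring through the polyhedral bijection (since $\calc_0$ may lie in $\{0\}\times N_{\R}$) is a sensible observation, and your well-definedness/surjectivity/injectivity checks are all correctly sourced.
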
\begin{proof}
This follows immediately from
Proposition~\ref{prop:EveryFlagLocEquivToSimplicial},
Theorem~\ref{thm:BijectionFiltersAndCont}, and  Corollary~\ref{coro:FlagsGiveSamePrimeIffLocallyEquiv}.
\end{proof}

\begin{coro}\label{coro:GeomMeaningOfOtherPolyhedralFilters}
There is a bijection from the set of prime congruences $P$ on $\base[\Mon]$ such that the map $\base\to\base[\Mon]\to\kappa(P)$ is injective and the set of prime filters on the \BigLarry. 
This bijection extends the bijection $P\mapsto\calf_P$ from $\ContBase{\base}\base[\Mon]$ to the set of prime filters on the \BigLarry\ such that the filter contains a polytope.
\end{coro}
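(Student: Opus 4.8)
The plan is to reduce the general statement to the two bijections already established in Theorem~\ref{thm:BijectionFiltersAndCont}, by identifying the primes $P$ with $\base\to\kappa(P)$ injective as exactly those whose homogenized filter $\wtcalf_P$ comes from a filter on the \BigLarry\ (i.e.\ from $\{c(U) : U\in\calf\}$ for a prime filter $\calf$ on $\Gamma$-rational polyhedral sets), dropping the requirement that $\calf$ contain a polytope. First I would make precise the correspondence between prime filters $\calf$ on the \BigLarry\ and the prime filters $\wtcalf$ on the \ConoidLarry\ of the form $\{c(U) : U\in\calf\}$: a $\Gamma$-admissible \conoidSet\ not contained in $\{0\}\times N_\R$ is $c(U)$ for a unique $\Gamma$-rational polyhedral set $U$, and one checks (just as in the polytope-versus-polyhedron bookkeeping of Proposition~\ref{prop:BigAndLittleLarry}) that $\calf\mapsto\{c(U):U\in\calf\}$ is an order isomorphism onto the set of prime filters $\wtcalf$ on the \ConoidLarry\ that contain $c(N_\R)=\R_{\geq0}\times N_\R$ and are ``generated by their restriction to $\{1\}\times N_\R$''. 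The cleanest way to package this: $\calf\mapsto\{c(U):U\in\calf\}$ and $\wtcalf\mapsto\{U : c(U)\in\wtcalf\}$ are mutually inverse between prime filters on the \BigLarry\ and prime filters $\wtcalf$ on the \ConoidLarry\ such that every element of $\wtcalf$ contains some $c(U)\in\wtcalf$.

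Next I would translate this back to primes via Theorem~\ref{thm:BijectionFiltersAndCont}, which gives a bijection $P\leftrightarrow\wtcalf_P$ between \emph{all} prime congruences on $\base[\Mon]$ and \emph{all} prime filters on the \ConoidLarry. So the map $P\mapsto\calf_P := \{U : c(U)\in\wtcalf_P\}$ is a bijection from the set of primes $P$ with $\wtcalf_P$ in the special subclass just described to the set of prime filters on the \BigLarry. The remaining point is to identify that subclass of primes: $\wtcalf_P = \{c(U):c(U)\in\wtcalf_P\}$, i.e.\ every element of $\wtcalf_P$ contains some cone of the form $c(U)$, exactly when the simplicial flag of cones $\calc_\bullet$ with $P=P_{\calc_\bullet}$ has $\calc_0\not\subseteq\{0\}\times N_\R$ (so that $\calc_\bullet = c(\calp_\bullet)$ for a flag of polyhedra). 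Indeed, by Theorem~\ref{thm:TheFilterHasFlagMeaning} every element of $\wtcalf_P$ is a neighborhood of $\calc_\bullet$, hence contains $\calc_0$; if $\calc_0\not\subseteq\{0\}\times N_\R$ then $\calc_0 = c(\{\text{pt}\})$ and any $\Gamma$-admissible cone containing $\calc_0$ contains some $c(U)$ in $\wtcalf_P$ (take $U$ a small $\Gamma$-rational polytope neighborhood of the point, as in Lemma~\ref{lemma:FilterHasPolytope}), while if $\calc_0\subseteq\{0\}\times N_\R$ then $\calc_0$ itself is an element of $\wtcalf_P$ containing no $c(U)$. Finally, by Proposition~\ref{coro: matrix_form} (or the discussion preceding Definition~\ref{def:PrimeOfAFlag}), $\calc_0\not\subseteq\{0\}\times N_\R$ is equivalent to the $(1,1)$-entry of a defining matrix being positive, which — unwinding the construction of $\varphi_{\mbo w}$ — is equivalent to $\log$ being injective on $\base^\times$ into $\kappa(P)^\times$, i.e.\ to $\base\to\kappa(P)$ being injective. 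This gives the asserted bijection, and it restricts to $P\mapsto\calf_P$ on $\ContBase{\base}\base[\Mon]$ because that subspace is precisely (Proposition~\ref{coro: matrix_form}) the primes with $(1,1)$-entry positive \emph{and}, by Lemma~\ref{lemma:FilterHasPolytope}, these are exactly the ones whose $\calf_P$ contains a polytope; the compatibility of the two constructions of $\calf_P$ is the content of the last sentence of the definition of $\wtcalf_P,\calf_P$ and of Theorem~\ref{thm:BijectionFiltersAndCont}.

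I expect the main obstacle to be the careful bookkeeping in the first step — proving that ``every element of $\wtcalf$ contains some $c(U)$ in $\wtcalf$'' is the exact image of the \BigLarry\ filters, and that primeness and the five filter axioms transfer cleanly in both directions under $\calf\leftrightarrow\{c(U):U\in\calf\}$. The subtlety is that a general $\Gamma$-admissible \conoidSet\ $\wt U$ need not be of the form $c(U)$ (it may have components inside $\{0\}\times N_\R$), so the isomorphism is not literally ``apply $c$ componentwise'' but rather ``$\wt U\in\wtcalf$ iff $\wt U$ contains some $c(U)\in\wtcalf$''; verifying this is well-defined and respects unions (axiom (5)) is where I would spend the most care, though it parallels the polytopal/polyhedral argument in Proposition~\ref{prop:BigAndLittleLarry} closely enough that no genuinely new idea is needed. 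Everything else is a direct appeal to Theorems~\ref{thm:BijectionFiltersAndCont} and~\ref{thm:TheFilterHasFlagMeaning} together with the matrix criterion of Proposition~\ref{coro: matrix_form}.
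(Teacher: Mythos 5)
There is a genuine gap in the last step, and it collapses the corollary to a restatement of what you already have for $\ContBase{\base}\base[\Mon]$. You assert the chain of equivalences
\[
\calc_0\not\subseteq\{0\}\times N_\R
\ \Longleftrightarrow\
\text{the $(1,1)$-entry of a defining matrix is positive}
\ \Longleftrightarrow\
\base\to\kappa(P)\text{ is injective},
\]
but the second equivalence is false. Positivity of the $(1,1)$-entry is exactly the condition $P\in\ContBase{\base}\base[\Mon]$ (Proposition~\ref{coro: matrix_form}), whereas injectivity of $\base\to\kappa(P)$ only requires the \emph{first column} of the defining matrix to be nonzero. The prime on $\T[x^{\pm1},y^{\pm1}]$ with defining matrix $\left(\begin{smallmatrix}0&0&1\\1&0&0\end{smallmatrix}\right)$ (Remark~\ref{remark:FilterWithNoContPrime} together with the remark after Corollary~\ref{coro:GeomMeaningOfOtherPolyhedralFilters}) has $\base\to\kappa(P)$ injective but lies outside $\ContBase{\base}\base[\Mon]$; here $\calc_0\subseteq\{0\}\times N_\R$ but $\calc_1\not\subseteq\{0\}\times N_\R$. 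If your identification were correct, the set of primes in the corollary would equal $\ContBase{\base}\base[\Mon]$, contradicting the claim that the corollary's bijection strictly extends $P\mapsto\calf_P$.

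The specific place the argument breaks is the sentence ``if $\calc_0\subseteq\{0\}\times N_\R$ then $\calc_0$ itself is an element of $\wtcalf_P$ containing no $c(U)$.'' For a flag of length $k\geq 1$, the ray $\calc_0$ is a proper face of $\calc_1$ and so is disjoint from $\relint\calc_1$; by Theorem~\ref{thm:TheFilterHasFlagMeaning} it is therefore \emph{not} a neighborhood of $\calc_\bullet$ and \emph{not} in $\wtcalf_P$. The correct flag condition, which the paper uses, is $\calc_k\not\subseteq\{0\}\times N_\R$: choosing $w_i\in\relint\calc_i$ gives a defining matrix $\left(\begin{smallmatrix}w_0\\\vdots\\w_k\end{smallmatrix}\right)$, and the first column is nonzero iff some $w_i\notin\{0\}\times N_\R$ iff $\calc_k\not\subseteq\{0\}\times N_\R$. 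Once that is in place, your reduction via Theorem~\ref{thm:BijectionFiltersAndCont} works: when $\calc_k\subseteq\{0\}\times N_\R$, the $\Gamma$-admissible cone $\{0\}\times N_\R$ is a neighborhood of $\calc_\bullet$, hence an element of $\wtcalf_P$ contained in $\{0\}\times N_\R$; when $\calc_k\not\subseteq\{0\}\times N_\R$, every neighborhood meets $\relint\calc_k$, which is disjoint from $\{0\}\times N_\R$. The rest of your outline (the bookkeeping identifying filters on the \BigLarry\ with the appropriate subclass of filters on the \ConoidLarry) is essentially what the paper does and is fine, though you should replace ``every element contains some $c(U)\in\wtcalf$'' with the paper's cleaner criterion ``no element of $\wtcalf$ is contained in $\{0\}\times N_\R$'' to avoid re-deriving it.
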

\begin{proof}
Note that the map $\calf\mapsto\{c(U)\;:\;U\in\calf\}$ is a bijection from the set of prime filters on the \BigLarry\ to the set of prime filters $\wtcalf$ on the \ConoidLarry\ such that every $\wt{U}\in\wtcalf$ is not contained in $\{0\}\times N_{\R}$. So it suffices to show that, if $P$ is a prime congruence on $\base[\Mon]$, the map $\base\to\base[\Mon]\to\kappa(P)$ is injective if and only if every $\wt{U}\in\wtcalf_P$ is not contained in $\{0\}\times N_{\R}$. 

Pick a simplicial flag $\calc_\bullet=(\calc_0\leq\cdots\leq\calc_k)$ of cones such that $P=P_{\calc_\bullet}$ and fix any $w_i\in\relint\calc_i\subseteq\calc_i\sdrop\calc_{i-1}$ for $0\leq i\leq k$. Thus, $\begin{pmatrix}w_0\\\vdots\\w_k\end{pmatrix}$ is a defining matrix for $P$, and so the map $\base\to\base[\Mon]\to\kappa(P)$ is injective if and only if the first column of this matrix is not all zero, i.e., if $w_i\notin\{0\}\times N_{\R}$ for some $i$. Since $w_i\in\relint\calc_i$, $w_i\notin\{0\}\times N_{\R}$ if and only if $\calc_i\not\subseteq\{0\}\times N_{\R}$. So $\base\to\base[\Mon]\to\kappa(P)$ is injective if and only if $\calc_k\not\subseteq\{0\}\times N_{\R}$. 

But, by Theorem~\ref{thm:TheFilterHasFlagMeaning}, $\calc_k\not\subseteq\{0\}\times N_{\R}$ if and only if every $\wt{U}\in\wtcalf_P$ is not contained in $\{0\}\times N_{\R}$.
\end{proof}

\begin{remark}
Under the bijection of Corollary~\ref{coro:GeomMeaningOfOtherPolyhedralFilters}, the prime filter given in Remark~\ref{remark:FilterWithNoContPrime} corresponds to the prime congruence on $\T[x^{\pm1},y^{\pm1}]$ with defining matrix $\begin{pmatrix}
0&0&1\\
1&0&0
\end{pmatrix}$.
\end{remark}

\begin{coro}\label{cor: classify-mon-orders} Let $\Mon\cong\Z^n$.
There are explicit bijections between
\begin{enumerate}
    \item the set of all prime congruences on $\B[\Mon]$,
    \item the set of flags $\calc_\bullet$ of cones in $N_{\R}$ modulo rational local equivalence,
    \item the set of prime filters on the lattice of rational fan support sets in $N_{\R}$,
    \item the set of all monomial preorders on a Laurent polynomial ring in $n$ variables.
\end{enumerate}
The bijections between (1), (2), and (3) are given by applying the bijections of Corollary~\ref{coro:explicit BijectionsAllPrimeCongsAndThreeOthers} as follows. Given a prime congruence $P$ on $\B[\Mon]$, view it as a prime congruence on $\T[\Mon]$ by pulling back along the the map $\T[\Mon]\to\B[\Mon]$. Given a flag of cones in $N_{\R}$, consider it as a flag of cones in $\R_{\geq0}\times N_{\R}$ by adding a first coordinate $0$. Given a prime filter on the lattice of rational fan support sets in $N_{\R}$, push it forward to a filter base on the lattice of $\R$-admissible fan support sets in $\R_{\geq0}\times N_{\R}$ via the map $N_{\R}\to\R_{\geq0}\times N_{\R}$ adjoining a first coordinate $0$, and let this filter base generate a filter. The map from (1) to (4) is given by sending a prime congruence $P$ to the preorder $\leq_P$ on $\Terms{\B[\Mon]}=\Mon$.

\end{coro}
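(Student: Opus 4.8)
The plan is to deduce the whole statement from Corollary~\ref{coro:explicit BijectionsAllPrimeCongsAndThreeOthers} applied with $\base=\T$ (so $\Gamma=\R$), by identifying each of (1), (2), (3) with the sub-collection of the corresponding object there that is ``supported on $\{0\}\times N_{\R}$'', and by treating the passage to monomial preorders in (4) separately via Proposition~\ref{prop: side_claim}.

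First I would set up the pullback. Let $\pi\colon\T[\Mon]\to\B[\Mon]$ be the semiring surjection which is the identity on $\Mon$ and collapses $\T^{\times}$ to $1$. For a prime congruence $P$ on $\B[\Mon]$ one has $\T[\Mon]/\pi^{*}P\cong\B[\Mon]/P$, so $\pi^{*}P$ is a prime congruence on $\T[\Mon]$ containing $\ker\pi=\langle(t^{a},t^{b}):a,b\in\R\rangle$; conversely every prime $Q$ on $\T[\Mon]$ with $Q\supseteq\ker\pi$ equals $\pi^{*}P$ for the unique prime $P=(\pi\times\pi)(Q)$ on $\B[\Mon]$. Evaluating $\Phi$ on the exponent vectors of $t^{1}$ and $1_{\T}$, one sees that $Q\supseteq\ker\pi$ iff $|t^{a}|_{Q}=1_{\kappa(Q)}$ for all $a$ iff some --- equivalently every --- defining matrix of $Q$ has zero first column. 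Thus $P\mapsto\pi^{*}P$ is a bijection from prime congruences on $\B[\Mon]$ onto the primes on $\T[\Mon]$ with zero first column, and by construction the total preorder that $\pi^{*}P$ induces on $\{\chi^{u}:u\in\Mon\}$ is exactly $<_{P}$ on $\Terms{\B[\Mon]}=\Mon$.

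Next I would translate the three sides of Corollary~\ref{coro:explicit BijectionsAllPrimeCongsAndThreeOthers}. Write $Q=P_{\calc_{\bullet}}$ for a simplicial flag $\calc_{\bullet}$ of cones in $\R_{\geq0}\times N_{\R}$. The defining matrix built from a choice of $w_{i}\in\relint\calc_{i}$ has zero first column iff each $w_{i}$ has zero first coordinate, which holds iff $\calc_{i}\subseteq\{0\}\times N_{\R}$ (the relative interior of a cone not contained in the hyperplane $\{0\}\times N_{\R}$ misses that hyperplane), hence, by nesting, iff $\calc_{k}\subseteq\{0\}\times N_{\R}$. Such flags are precisely the images of flags of cones in $N_{\R}$ under $x\mapsto(0,x)$; Proposition~\ref{prop:EveryFlagLocEquivToSimplicial}, whose construction keeps every cone inside $\{0\}\times N_{\R}$, shows each is locally equivalent to a simplicial one of the same shape; a short boundary-hyperplane argument shows this family is a union of $\R$-local-equivalence classes; and since a cone contained in $\{0\}\times N_{\R}$ is $\R$-admissible exactly when it is rational, and an $\R$-admissible neighborhood of such a flag is detected by its intersection with $\{0\}\times N_{\R}$, $\R$-local equivalence of these flags coincides with rational local equivalence of the corresponding flags in $N_{\R}$; this is (1)$\leftrightarrow$(2). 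Similarly, Theorem~\ref{thm:TheFilterHasFlagMeaning} (together with the observation inside the proof of Proposition~\ref{prop:IneqInPrimeIffHalfspaceInItsFilter}) gives that $Q$ has zero first column iff $\wtcalf_{Q}$ contains a \conoidSet{} lying in $\{0\}\times N_{\R}$, and restriction to those \conoidSets{} contained in $\{0\}\times N_{\R}$ --- precisely the rational \conoidSets{} in $N_{\R}$ --- is a bijection from prime filters of this kind on the lattice of $\R$-admissible \conoidSets{} onto prime filters on the lattice of rational \conoidSets{} in $N_{\R}$, with inverse ``generate a filter from $\{\{0\}\times V\}$''; this is (1)$\leftrightarrow$(3). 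Since the maps of Corollary~\ref{coro:explicit BijectionsAllPrimeCongsAndThreeOthers} restrict to these three sub-collections, composing with the identifications above yields the bijections among (1), (2), (3) with the transported maps exactly as described in the statement.

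It remains to relate (1) and (4). Since every element of $\B[\Mon]$ is a finite sum of monomials with coefficient $1_{\B}$, a prime $P$ on $\B[\Mon]$ is determined by $\leq_{P}$ on terms, so sending $P$ to the preorder $<_{P}$ on $\Mon$ is injective. For surjectivity, given a monomial preorder $\preceq$ on $\Mon\cong\Z^{n}$ I would declare $t^{a}\chi^{u}\preceq't^{b}\chi^{v}$ exactly when $u\preceq v$; this $\preceq'$ is a multiplicative total preorder on $\Terms{\T[\Mon]}$ respecting the order on $\T$ (vacuously, since it makes $\T^{\times}$ a single class), so by Proposition~\ref{prop: side_claim} it equals $\leq_{Q}$ for a prime $Q$ on $\T[\Mon]$, which has zero first column; hence $Q=\pi^{*}P$ and $<_{P}=\preceq$. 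I expect the main obstacle to be the filter identification in the previous paragraph: one cannot simply copy the proof of Proposition~\ref{prop:BigAndLittleLarry}, since a general $\R$-admissible \conoidSet{} need not contain one lying in $\{0\}\times N_{\R}$; instead I would verify bijectivity by showing that a prime filter of the relevant kind is recovered from its restriction as the filter it generates, and that this generated filter is again prime --- the one non-formal point, handled by intersecting a union decomposition with a witnessing \conoidSet{} contained in $\{0\}\times N_{\R}$.
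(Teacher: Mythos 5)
Your proposal is correct and follows essentially the same route as the paper: pull back to primes on $\T[\Mon]$ with zero first column of the defining matrix, identify these with flags of cones in $\{0\}\times N_{\R}$ modulo local equivalence and with prime filters containing a \conoidSet{} in $\{0\}\times N_{\R}$, and pass these identifications through Corollary~\ref{coro:explicit BijectionsAllPrimeCongsAndThreeOthers}. Your write-up supplies more detail on the filter pushforward/restriction being mutually inverse and on the local-equivalence matching, and it makes explicit the (1)$\leftrightarrow$(4) argument via Proposition~\ref{prop: side_claim}, which the paper's proof leaves implicit.
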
\begin{proof}
The primes on $\T[\Mon]$ which are pullbacks of primes on $\B[\Mon]$ are those that are given by matrices with first column zero. This shows that the primes on $\B[\Mon]$ are in bijection with local equivalence classes of flags of cones contained in $\{0\} \times N_{\R}$ modulo local equivalence, i.e., flags of cones in $N_{\R}$ modulo rational local equivalence. By Corollaries~\ref{coro:explicit BijectionsAllPrimeCongsAndThreeOthers} and \ref{coro:GeomMeaningOfOtherPolyhedralFilters}, these are in bijection with the set of those prime filters on the lattice of $\R$-admissible fan support sets whose restriction to $\{1\} \times N_{\R}$ is not a prime filter because this restriction contains the empty set. The only way this can occur is if the filter contains a set that is contained in $\{0\} \times N_{\R}$. This, in turn, happens if and only if the filter is generated by the pushforward of a prime filter on the lattice of rational fan support sets in $N_{\R}$ along the inclusion map $N_{\R}\to\R_{\geq0}\times N_{\R}$ of the zero-slice. 
\end{proof}

The following corollary is an application of the above results. It provides an explicit criterion for when two matrices define the same prime.

\begin{coro}\label{coro:DoTheseMatricesGiveTheSamePrime}
Let $C$ and $C'$ be real matrices of sizes $k\times n$ and $k'\times n$, respectively. Suppose that $C$ and $C'$ define prime congruences on $\base[x_1^{\pm1},\ldots,x_n^{\pm1}]$, i.e., the first columns of $C$ and $C'$ are lexicographically at least the zero vector. Use downward gaussian elimination and removal of rows of zeros on $C$ and $C'$ to obtain matrices $\wt{C}$ and $\wt{C}'$, respectively, whose first columns have all entries non-negative and whose rows are linearly independent. Then $C$ and $C'$ define the same prime congruence on $\base[x_1^{\pm1},\ldots,x_n^{\pm1}]$ if and only if the flags $\calc_\bullet(\wt{C})$ and $\calc_\bullet(\wt{C}')$ of cones are locally equivalent.
\end{coro}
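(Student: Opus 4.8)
The plan is to assemble the statement from results already in hand, with the matrix-to-flag dictionary of this section serving as the bridge. First I would invoke Lemma~\ref{lemma:rref}: since $\wt{C}$ and $\wt{C}'$ are obtained from $C$ and $C'$ by downward gaussian elimination together with deletion of zero rows, and neither operation changes the prime congruence that a matrix defines, the matrices $C$ and $C'$ define the same prime congruence on $\base[x_1^{\pm1},\ldots,x_n^{\pm1}]$ if and only if $\wt{C}$ and $\wt{C}'$ do. Write $P$ and $P'$ for the primes defined by $\wt{C}$ and $\wt{C}'$, respectively.

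Next I would identify $P$ and $P'$ with primes attached to simplicial flags of cones. Because $\wt{C}$ already has all its first-column entries non-negative and its rows linearly independent, the construction $\calc_\bullet(\cdot)$ applied to $\wt{C}$ requires no further reduction and produces the simplicial flag whose $i$-th cone is generated by the first $i+1$ rows of $\wt{C}$; as noted just after Definition~\ref{def:PrimeOfAFlag}, $\wt{C}$ is then a defining matrix for $P_{\calc_\bullet(\wt{C})}$. Since $\wt{C}$ is, by construction, also a defining matrix for $P$, and a matrix determines the prime it defines, we get $P=P_{\calc_\bullet(\wt{C})}$; likewise $P'=P_{\calc_\bullet(\wt{C}')}$. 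Finally I would apply Corollary~\ref{coro:FlagsGiveSamePrimeIffLocallyEquiv} to the simplicial flags $\calc_\bullet(\wt{C})$ and $\calc_\bullet(\wt{C}')$: it gives $P_{\calc_\bullet(\wt{C})}=P_{\calc_\bullet(\wt{C}')}$ if and only if the two flags are locally equivalent. Concatenating these equivalences yields the corollary.

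There is no hard analytic content here; the result is a synthesis of the preceding theory, and the only points I would take care to spell out are bookkeeping ones. The first is that the ad hoc map ``matrix $\mapsto$ prime'' via $f\mapsto\Phi(f)$ in the lexicographic order agrees with the map $\mbo{w}\mapsto P_{\mbo{w}}=\ker\varphi_{\mbo{w}}$ used to define $P_{\calc_\bullet}$: this is immediate from the fact that $\varphi_{\mbo{w}}$ sends $a\chi^u$ to $(w_0,\ldots,w_k)(\log a,u)$ and $\RnLexSemifield{k+1}$ carries the lexicographic order, so that $\ker\varphi_{\mbo{w}}$ is literally the congruence ``$\Phi(f)=\Phi(g)$'' for the matrix with rows $w_i$. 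The second is that $\wt{C}$ and $\wt{C}'$ may well have different numbers of rows; this is harmless, since $\Gamma$-admissible neighborhoods, and hence local equivalence, are defined for flags of cones of arbitrary and possibly unequal length.
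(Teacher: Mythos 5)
Your proof is correct and takes essentially the same route as the paper's: Lemma~\ref{lemma:rref} identifies the primes defined by $C$ and $C'$ with $P_{\calc_\bullet(\wt{C})}$ and $P_{\calc_\bullet(\wt{C}')}$, and Corollary~\ref{coro:FlagsGiveSamePrimeIffLocallyEquiv} finishes. The paper compresses your two intermediate steps into one sentence, but the reasoning is identical, and your bookkeeping remarks (agreement of the matrix-to-prime dictionary with $P_{\mbo{w}}$, and that flags of unequal length are allowed) are accurate clarifications rather than extra content.
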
\begin{proof}
By Lemma~\ref{lemma:rref}, the prime congruences that $C$ and $C'$ define are $P_{\calc_\bullet(\wt{C})}$ and $P_{\calc_\bullet(\wt{C}')}$, respectively. By Corollary~\ref{coro:FlagsGiveSamePrimeIffLocallyEquiv}, these are equal if and only if $\calc_\bullet(\wt{C})$ and $\calc_\bullet(\wt{C}')$ are locally equivalent.
\end{proof}

We now use Proposition~\ref{prop:IneqInPrimeIffHalfspaceInItsFilter} to show that the smallest dimension of a polyhedron in $\calf_P$ is an algebraic invariant of $P \in \ContBase{\base}{\base[\Mon]}$. 

\begin{proposition}\label{prop: min-dim}
For any $P \in \ContBase{\base}{\base[\Mon]}$,
$$\min \{ \dim U : U \in \calf_P\} 
= \rk (\kappa(P)^\times/\base^\times) 
= \rk(M)-\HT(P)=\trdeg(\kappa(P)/\base).$$
\end{proposition}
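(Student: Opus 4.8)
The plan is to establish the chain of equalities by linking the left-hand quantity to known algebraic invariants via the results already developed. The last two equalities, namely $\rk(\kappa(P)^\times/\base^\times) = \rk(M) - \HT(P) = \trdeg(\kappa(P)/\base)$, should follow almost immediately: the middle one is Lemma~\ref{lemma:HeightInToricCase} (applied with $\Lambda = \Mon$, so $\rk(\Lambda) = \rk(M)$, and using that $P \in \ContBase{\base}{\base[\Mon]}$), and the equality with $\trdeg(\kappa(P)/\base)$ is a standard fact about finitely generated semifield extensions of totally ordered semifields—the transcendence degree of $\kappa(P)$ over $\base$ equals the rank of the value group quotient $\kappa(P)^\times/\base^\times$ (this mirrors the classical Abhyankar-type equality for valued fields, and should be citable from \cite{JM17} or \cite{FM22}). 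So the real content is the first equality, $\min\{\dim U : U \in \calf_P\} = \rk(\kappa(P)^\times/\base^\times)$.

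For that, I would fix a simplicial flag $\calc_\bullet = (\calc_0 \leq \cdots \leq \calc_k)$ of cones with $P = P_{\calc_\bullet}$, and since $P \in \ContBase{\base}\base[\Mon]$, write $\calc_\bullet = c(\calp_\bullet)$ for a flag $\calp_\bullet = (\calp_0 \leq \cdots \leq \calp_k)$ of polyhedra with $\dim \calp_i = i$. By Theorem~\ref{thm:TheFilterHasFlagMeaning}, $\calf_P$ consists exactly of the $\Gamma$-rational polyhedra that are neighborhoods of $\calp_\bullet$, i.e., those $U$ containing $\calp_0$ and meeting each $\calp_i \setminus \calp_{i-1}$. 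The key geometric observation is this: the smallest dimension of a $\Gamma$-rational polyhedron that is a neighborhood of $\calp_\bullet$ is the dimension of the smallest affine subspace containing $\calp_k$ that is defined over $\Gamma$—equivalently, the dimension of the smallest $\Gamma$-rational affine subspace through $\calp_0$ whose direction space contains the directions of all the $\calp_i$. Indeed, any neighborhood $U$ of $\calp_\bullet$ must contain $\calp_0$ and points in $\relint \calp_i$ for each $i$; since $U$ is $\Gamma$-rational, its affine hull is a $\Gamma$-rational affine subspace containing these points, hence containing the $\Gamma$-rational affine hull $A$ of $\{\calp_0\} \cup \relint\calp_1 \cup \cdots \cup \relint\calp_k$. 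Conversely, a full-dimensional $\Gamma$-rational polytope inside $A$ centered appropriately at $\calp_0$ is a neighborhood of $\calp_\bullet$ of dimension $\dim A$. So $\min\{\dim U : U \in \calf_P\} = \dim A$.

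It then remains to identify $\dim A$ with $\rk(\kappa(P)^\times/\base^\times)$. Using the defining matrix $\begin{pmatrix}1 & \xi_0 \\ 0 & \xi_1 \\ \vdots & \vdots \\ 0 & \xi_k\end{pmatrix}$ for $P$ from Proposition~\ref{coro: matrix_form}, the polyhedron $\calp_i$ has $\calp_0 = \xi_0$ and direction vectors spanned by $\xi_1, \ldots, \xi_i$ (in $N_\R$), so $A$ is the smallest $\Gamma$-rational affine subspace through $\xi_0$ containing $\xi_0 + \mathrm{span}_\R(\xi_1,\ldots,\xi_k)$. Its dimension is the dimension of the smallest $\Q$-... more precisely $\Gamma$-rational... linear subspace of $N_\R$ containing the "new" directions, which by duality is $n$ minus the rank of the lattice $\{u \in \Mon : \angbra{\xi_0, u} \in \Gamma,\ \angbra{\xi_j, u} = 0 \text{ for } 1 \leq j \leq k\}$. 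On the other hand, $\kappa(P)^\times/\base^\times$ is generated by the images $|a\chi^u|_P$ modulo $\base^\times$, and $|\chi^u|_P \in |\base^\times|_P$ exactly when $a\chi^u \equiv_P b$ for some $b \in \base^\times$, which by Proposition~\ref{prop:IneqInPrimeIffHalfspaceInItsFilter} (applied to both $a\chi^u/b$ and its inverse) happens precisely when $u$ lies in that lattice. Thus $\rk(\kappa(P)^\times/\base^\times) = n - \rk(\text{that lattice}) = \dim A$, completing the argument. The main obstacle I anticipate is making the duality computation in this last step fully rigorous over the subgroup $\Gamma \subseteq \R$ (which need not be all of $\Q$ or $\R$): one must be careful that "$\Gamma$-rational affine hull" behaves well, and here I would lean on the same Farkas-type input (\cite[Proposition 2.3]{Fri19}) used in Lemma~\ref{lem: G-rational-Farkas}, or alternatively route the entire dimension count through $\HT(P)$ and Lemma~\ref{lemma:HeightInToricCase} to sidestep the explicit lattice bookkeeping.
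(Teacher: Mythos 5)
Your proposal is correct and follows essentially the same route as the paper: Lemma~\ref{lemma:HeightInToricCase} and \cite[Proposition B.11]{FM22} give the last two equalities, and the first equality is obtained by showing the minimum dimension equals $\dim N_\R$ minus the rank of the lattice of $\Gamma$-rational hyperplanes in $\calf_P$, which via Proposition~\ref{prop:IneqInPrimeIffHalfspaceInItsFilter} and a fixed defining matrix is identified with $\rk\ker(M \to \kappa(P)^\times/\base^\times)$. The obstacle you flag at the end is handled in the paper exactly the way you anticipate: one observes that the lattice $\calH \subseteq \Gamma\times M$ projects injectively onto its image in $M$ (because $\log(a)$ is determined by $\angbra{\xi_0,u}$), which sidesteps any delicate $\Gamma$-rational affine-hull duality.
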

\begin{proof}

Lemma~\ref{lemma:HeightInToricCase} gives us the second equality in the proposition. By \cite[Proposition~B.11]{FM22}, we get the equality with $\trdeg(\kappa(P)/\base)$.

Note that $\min \{ \dim U : U \in \calf_P\}$ is attained for some $\Gamma$-rational polyhedron $U\in\calf_P$, for which $\dim U$ is the dimension of the smallest $\Gamma$-rational affine space that contains $U$. 
In particular, if we let $\calH$ be the subgroup of $\Gamma\times\Mon$ consisting of those $(\alpha,u)$ such that 
the hyperplane $\{\xi\in N_{\R}\;:\;\alpha+\angbra{\xi,u}=0\}$
is in $\calf_P$, then $\min \{ \dim U : U \in \calf_P\}=\dim N_{\R}-\rk\calH$.
Note that, for any term $a\chi^u\in\base[\Mon]$, 
we have that $(\log(a),u)$ is in $\calh$ 
exactly if $R(1_\base,a\chi^u)$ and $R(1_\base,a^{-1}\chi^{-u})$ are both in $\calf_P$. By  Proposition~\ref{prop:IneqInPrimeIffHalfspaceInItsFilter}, this happens exactly if $1_{\kappa(P)}=|a\chi^u|_P$.

Choose a defining matrix for $P \in \ContBase{\base}{\base[\Mon]}$ of the form
$$\begin{pmatrix}1 & \xi_0\\0 &\xi_1 \\ \vdots & \vdots  \\ 0 & \xi_k \end{pmatrix}$$
for some $\xi_0,\ldots,\xi_k\in N_{\R}$.
Then, looking at the conditions (\ref{eq: conditions-rational-sets}.0),\ldots,(\ref{eq: conditions-rational-sets}.$k$) from Section~\ref{subsec:FilterOfAPrimeCong}, we see that $1_{\kappa(P)}=|a\chi^u|_P$ if and only if 
$$log(a)+ \left< \xi_0, u\right> = \left< \xi_1, u\right>= \cdots =\left< \xi_{k-1}, u\right>=\left< \xi_{k}, u\right>=0.$$
We now see that the projection $\Gamma \times M \rightarrow M$ maps $\calH$ isomorphically onto its image. Indeed, the values of $\xi_0$ and $u$ uniquely determine $\log(a)$. So 
\begin{align*}
    \rk(\calH)&=\rk \left\{ u\in M \;:\; \left< \xi_0, u\right> \in \Gamma \text{ and } \left< \xi_i, u\right> = 0, \text{ for } 1 \leq i \leq k \right\} \\
    &= \rk (\ker (M \rightarrow \kappa(P)^\times/\base^\times)) = \rk M-\rk(\kappa(P)^\times/\base^\times),
\end{align*}
where the final equality holds because the map $M\to\kappa(P)^\times/\base^\times$ is surjective. Thus, 
\begin{align*}
    \min \{ \dim U \;:\; U \in \calf_P\} &= \dim N_\R - \rk\calH = \rk M - \rk (\ker (M \rightarrow \kappa(P)^\times/\base^\times)) \\  &= \rk \kappa(P)^\times/\base^\times.
\end{align*}

\end{proof}

The previous result finally allows us to prove the geometric criterion of when $\leq_P$ is a valuated monomial \emph{order}.
A flag $\calp_\bullet=(\calp_0\leq\calp_1\leq\cdots\leq\calp_k)$ of polyhedra in $N_{\R}$ is called \emph{complete} if $k=\dim N_{\R}$.

\begin{prop}\label{prop:whenDoesAPrimeTotallyOrder}
For any $P \in \ContBase{\base}{\base[\Mon]}$, $\leq_P$ is an \emph{order} on $\Terms{\base[\Mon]}$ if and only if we can write $P=P_{\calp_\bullet}$ with $\calp_\bullet$ a complete flag in $N_{\R}$.
\end{prop}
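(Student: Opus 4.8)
The plan is to reduce the claim to a statement about the height of $P$. Consider the subgroup $\calh_P=\{(\gamma,v)\in\Gamma\times\Mon:|t^\gamma\chi^v|_P=1_{\kappa(P)}\}$ of $\Gamma\times\Mon$. For nonzero terms $t^\alpha\chi^u$ and $t^\beta\chi^\lambda$, dividing by the unit $t^\beta\chi^\lambda$ shows $t^\alpha\chi^u\equiv_P t^\beta\chi^\lambda$ if and only if $(\alpha-\beta,u-\lambda)\in\calh_P$; since we identify terms up to equality of the valuations of their coefficients, $\leq_P$ is an order precisely when $\calh_P=\{0\}$. The proof of Proposition~\ref{prop: min-dim} shows that the projection $\Gamma\times\Mon\to\Mon$ carries $\calh_P$ isomorphically onto $\ker(\Mon\to\kappa(P)^\times/\base^\times)$ and that this map is surjective, so $\calh_P=\{0\}$ iff $\Mon\to\kappa(P)^\times/\base^\times$ is an isomorphism iff $\rk(\kappa(P)^\times/\base^\times)=\rk\Mon$; as $\Mon\cong\Z^n$ is its own groupification, Lemma~\ref{lemma:HeightInToricCase} then gives that this holds iff $\HT(P)=0$. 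Thus the plan reduces the proposition to showing: $\HT(P)=0$ if and only if $P=P_{\calp_\bullet}$ for some complete flag $\calp_\bullet$.

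For the direction ``complete flag $\Rightarrow\HT(P)=0$'', suppose $P=P_{\calp_\bullet}$ with $\calp_\bullet=(\calp_0\leq\cdots\leq\calp_n)$ complete, so $n=\dim N_{\R}$. First I would use Proposition~\ref{prop:EveryFlagLocEquivToSimplicial} and Corollary~\ref{coro:FlagsGiveSamePrimeIffLocallyEquiv} to replace $\calp_\bullet$ by a locally equivalent flag with $c(\calp_\bullet)$ simplicial; this leaves $P$ unchanged and preserves each $\dim\calp_i$, so the flag remains complete. Then $P$ is defined by the $(n+1)\times(n+1)$ matrix whose rows generate the rays of $c(\calp_0),\ldots,c(\calp_n)$, and since $c(\calp_n)$ is a full-dimensional simplicial cone this matrix is invertible. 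Normalizing it via Proposition~\ref{coro: matrix_form} (recall $P\in\ContBase{\base}{\base[\Mon]}$) and downward Gaussian elimination (Lemma~\ref{lemma:rref}), I may take it in the form $\left(\begin{smallmatrix}1&\xi_0\\ 0&\xi_1\\ \vdots&\vdots\\ 0&\xi_n\end{smallmatrix}\right)$; it is still invertible, so $\xi_1,\ldots,\xi_n$ is a basis of $N_{\R}$. Any term $t^\gamma\chi^v$ with $|t^\gamma\chi^v|_P=1_{\kappa(P)}$ then satisfies $\angbra{\xi_i,v}=0$ for $1\le i\le n$, forcing $v=0$ and hence $\gamma=0$; so $\calh_P=\{0\}$ and $\HT(P)=0$.

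For the converse, assume $\HT(P)=0$, hence $\calh_P=\{0\}$. Using Proposition~\ref{coro: matrix_form} and Lemma~\ref{lemma:rref} I would fix a defining matrix $\matt=\left(\begin{smallmatrix}1&\xi_0\\ 0&\xi_1\\ \vdots&\vdots\\ 0&\xi_k\end{smallmatrix}\right)$ with $\xi_1,\ldots,\xi_k$ linearly independent, extend $\xi_1,\ldots,\xi_k$ to a basis $\xi_1,\ldots,\xi_n$ of $N_{\R}$, and form $\matt'$ by appending the rows $(0,\xi_{k+1}),\ldots,(0,\xi_n)$. Appending rows below only refines $\leq_\matt$ on pairs of terms whose $\matt$-images already coincide; since $\calh_P=\{0\}$, distinct terms never have coinciding $\matt$-image, so $\matt$ and $\matt'$ induce the same preorder on $\Terms{\base[\Mon]}$ and therefore define the same prime $P$. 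Now $\calc_\bullet(\matt')$ is a simplicial flag of cones whose first member is not contained in $\{0\}\times N_{\R}$, so it equals $c(\calp'_\bullet)$ for a flag of polyhedra $\calp'_\bullet=(\calp'_0\leq\cdots\leq\calp'_n)$ with $\dim\calp'_i=i$, i.e.\ a complete flag; and $P=P_{\calc_\bullet(\matt')}=P_{\calp'_\bullet}$.

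I expect the converse direction to be the delicate part, specifically the claim that appending rows to the defining matrix does not change the prime: in general this operation strictly refines the induced preorder on terms (it is exactly how one ``breaks ties'' to pass from a preorder to an order), so the argument genuinely uses $\HT(P)=0$ to ensure that $\leq_P$ already separates all distinct terms and that a prime congruence is determined by its preorder on $\Terms{\base[\Mon]}$ (cf.\ Proposition~\ref{prop: side_claim}). A smaller point to verify carefully is that the simplicialization procedure of Proposition~\ref{prop:EveryFlagLocEquivToSimplicial} preserves completeness, which it does because it replaces each cone in the flag by a subcone of the same dimension.
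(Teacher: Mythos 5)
Your proof is correct. The core mechanism of the converse direction matches the paper's: both extend $\xi_1,\ldots,\xi_k$ to a basis, append the corresponding rows to the defining matrix, and observe that because $\leq_P$ is already an order the appended rows never change the (pre)order on terms, hence the prime is unchanged. The paper packages this as a proof by contradiction against a maximal-length flag, while you give a direct construction passing through the explicit reduction ``order $\iff \calh_P=\{0\}\iff\HT(P)=0$''; these are the same argument in different clothing. The more genuine divergence is in the forward direction. You show that a complete simplicial flag yields an invertible $(n+1)\times(n+1)$ defining matrix, normalize it, and directly read off $\calh_P=\{0\}$ by linear algebra. The paper instead invokes Proposition~\ref{prop: min-dim}: every $\Gamma$-rational neighborhood of a complete flag contains $\dim N_{\R}+1$ affinely independent points and is therefore full-dimensional, forcing $\min\{\dim U : U\in\calf_P\}=\rk M$ and hence $\HT(P)=0$, after which it cites \cite[Corollary~9.15]{FM22} for ``$\HT(P)=0\Rightarrow$ order.'' Your route is more self-contained (you also reprove that last implication via $\calh_P$ and Lemma~\ref{lemma:HeightInToricCase}), whereas the paper's is shorter given the filter-dimension machinery it has already built. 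Your concern about the simplicialization preserving completeness is well-placed, and your resolution (each cone is replaced by a subcone of the same dimension, per the proof of Proposition~\ref{prop:EveryFlagLocEquivToSimplicial}) is exactly right.
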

\begin{proof}
If $\calp_\bullet$ is complete, then any $\Gamma$-rational neighborhood of $\calp_\bullet$ must be full-dimensional because it contains $\dim N_{\R}+1$ affinely independent points. 
So $\dim_{N_{\R}}=\min \{ \dim U : U \in \calf_P\}=\rk(M)-\HT(P)$. Thus $\HT(P)=0$ and so, by \cite[Corollary~9.15]{FM22} $\leq_P$ is an order on $\Terms{\base[\Mon]}$.

For the other direction, suppose that $\leq_P$ is an order on $\Terms{\base[\Mon]}$. Let $\calp_\bullet=(\calp_0\leq\calp_1\leq\cdots\leq\calp_k)$ be a flag of polyhedra in $N_{\R}$ such that $P=P_{\calp_\bullet}$ and $k$ is as large as possible. Now assume, for contradiction, that $k<\dim N_{\R}$. 
For $0\leq i\leq k$ pick $\xi_i\in\calp_i\sdrop\calp_{i-1}$, so 
$C=\begin{pmatrix}1&\xi_0\\
1&\xi_1\\
\vdots&\vdots\\
1&\xi_k
\end{pmatrix}$
is a defining matrix for $P$. Since $k<\dim N_{\R}$, there is a point $\xi_{k+1}$ which is affinely independent of $\xi_0,\ldots,\xi_k$. So the matrix 
$C'=\left(\begin{array}{cc}
\multicolumn{2}{c}{\hspace{-2mm}C}\\\hline
1&\xi_{k+1}
\end{array}\right)$
defines a flag $\calp_\bullet '=(\calp_0'\leq\calp_1'\leq\cdots\leq\calp_{k+1}')$ of simplices in $N_{\R}$.
For any two exponent vectors $\fraku_1$ and $\fraku_2$ we have $C'\fraku_1\leq_{\lex}C'\fraku_2$ if and only if $C\fraku_1\leq_{\lex}C\fraku_2$ because $\leq_P$ is an order. Thus $P_{\calp_{\bullet}'}=P$, contradicting the maximality of $k$.
\end{proof}

\begin{remark}
The same proof 
shows that the statement of Proposition~\ref{prop:whenDoesAPrimeTotallyOrder} is also true when $\Mon$ is any toric monoid.
\end{remark}


\renewcommand{\bibliofont}{\small}
\bibliographystyle{alpha}

\end{document}